\newcommand{\envelope}{(\raisebox{-.5pt}{\scalebox{1.45}{\Letter}}\kern-1.7pt \hspace{0.7mm})}
\begin{document}


\title{Faster Subgradient Methods for Functions with H\"olderian Growth}



\author{Patrick R. Johnstone\and  Pierre Moulin}

\institute{
Patrick R. Johnstone \at 
Department of Management Sciences and Information Systems, Rutgers Business School Newark and New Brunswick, Rutgers University\\
\email{patrick.r.johnstone@gmail.com}
\and 
Pierre Moulin \at 
Coordinated Science Laboratory, University of Illinois,Urbana, IL 61801, USA\\
\email{pmoulin@illinois.edu} 
}

\date{Received: date / Accepted: date}

\maketitle
\begin{abstract}
The purpose of this manuscript is to derive new convergence results for several subgradient methods applied to minimizing nonsmooth convex functions with H\"olderian growth. The growth condition is satisfied in many applications and includes functions with quadratic growth and weakly sharp minima as special cases. To this end there are three main contributions. First, for a constant and sufficiently small stepsize, we show that the subgradient method achieves linear convergence up to a certain region including the optimal set, with error of the order of the stepsize. Second, if appropriate problem parameters are known, we derive a decaying stepsize which obtains a much faster convergence rate than is suggested by the classical $O(1/\sqrt{k})$ result for the subgradient method. Thirdly we develop a novel ``descending stairs" stepsize which obtains this faster convergence rate and also obtains linear convergence for the special case of weakly sharp functions.
We also develop an adaptive variant of the ``descending stairs" stepsize which achieves the same convergence rate without requiring an error bound constant which is difficult to estimate in practice.

\end{abstract}

\section{Introduction}
\subsection{Motivation and Background}
In this manuscript we consider the following problem:
\begin{eqnarray}\label{Prob1}
\min_{x\in\calC} h(x),
\end{eqnarray}
where $\calC$ is a convex, closed, and nonempty subset of a real Hilbert space $\calH$, and $h:\calH\to\mathbb{R}$ is a convex and closed function. We do not assume $h$ is smooth or strongly convex. Problem (\ref{Prob1}) arises in many applications such as image processing, machine learning, compressed sensing, statistics, and computer vision \cite{hastie2009elements,agrb1992maximum,yang2015rsg}.

We focus on the class of \emph{subgradient methods} for solving this problem, which were first studied in the 1960s \cite{shor2012minimization,goffin1977convergence}. Since then, these methods have been used extensively because of their simplicity and low per-iteration complexity \cite{shor2012minimization,goffin1977convergence,rosenberg1988geometrically,nedic2010effect,nedic2001convergence,nemirovski2009robust}. Such methods only evaluate a subgradient of the function at each iteration. 
However in general they have a slow worst-case convergence rate of $h(\hat{x}_k)-\min_{x\in\calC}h(x)\leq O(1/\sqrt{k})$ after $k$ subgradient evaluations for a particular averaged point $\hat{x}_k$. In this manuscript we show how a structural assumption for Problem (\ref{Prob1}) that is commonly satisfied in practice yields faster subgradient methods.

The structural assumption we consider is the \emph{H\"older error bound} (throughout referred to as either HEB, HEB$(c,\theta)$, or H\"olderian growth). We assume that $h$ satisfies
\begin{eqnarray*}\label{errorBound}
\hspace{3cm}h(x)-h^*\geq c d(x,\calX_h)^{\frac{1}{\theta}},\quad\forall x\in\mathcal{C},\hspace{2.5cm}(\text{HEB})
\end{eqnarray*}
where
\begin{itemize}
	\item $\theta\in(0,1]$ is the ``error bound exponent",
	\item $c>0$ is the ``error bound constant",
	\item $h^*=\min_{x\in\calC}h(x)$ is the optimal value,
\item 
$\calX_h\triangleq\{x\in\calC:h(x)=h^*\}$ is the solution set (assumed to be nonempty), and
\item 
 $d(x,\calX_h)=\inf_{x^*\in\calX_h}\|x-x^*\|$. 
\end{itemize}
In general, an ``error bound" is an upper bound on the distance of a point to the optimal set by some residual function. The study of error bounds has a long tradition in optimization, sensitivity analysis, systems of inequalities, projection methods, and convergence rate estimation \cite{li2013global,tseng2010approximation,zhou2015unified,xu2016accelerate,bolte2015error,burke1993weak,zhang2013gradient,pang1997error,luo1993error,ferris1991finite,burke2002weak,karimi2016linear,beck2015linearly} In recent years there has been much renewed interest in the topic.  HEB is often referred to as the \emph{{\L}ojaziewicz error bound} \cite{bolte2007lojasiewicz} and is also related to the \emph{Kurdyka--\L ojaziewicz (KL) inequality} \cite{bolte2015error}. In fact in \cite{bolte2015error} it was shown that the KL inequality is equivalent to HEB for convex, closed, and proper functions.

There are three main motivations for studying the behavior of algorithms for problems satisfying  HEB. Firstly HEB holds for problems arising in many applications. In fact for a semialgebraic function, HEB is guaranteed to hold on a compact set for some $\theta$ and $c$ \cite{bolte2015error}. Secondly, many algorithms have been shown to achieve significantly faster convergence behavior when HEB is satisfied. Thirdly, under HEB it has been possible to develop even faster methods. 

The two most common instances of HEB in practice are $\theta=1/2$ and $\theta=1$. The $\theta=1/2$ case is often referred to as the \emph{quadratic growth condition} (QG) \cite{karimi2016linear}. The $\theta=1$ case is often referred to by saying the function has \emph{weakly sharp minima} (WS) \cite{burke2002weak}. The function itself may also be called a weakly sharp function. There are also a small number of applications where $\theta\neq 1/2$ or $1$, such as $L_d$ regression with $d\neq 1,2$ \cite{agrb1992maximum}. 
 
Due to its prevalence in applications, many recent papers have studied QG (the $\theta=1/2$ case). QG has been used to show a \emph{linear} convergence rate of the objective function values for various algorithms, such as the proximal gradient method, that would otherwise only guarantee sublinear convergence \cite{zhang2016new,beck2015linearly,zhou2015unified,karimi2016linear}. 
Many papers have discovered connections between QG and other error bounds and conditions known in the literature. Most importantly it was shown in \cite[Appendix A]{karimi2016linear} that for convex functions, QG is equivalent to the \emph{Luo-Tseng} error bound \cite{luo1993error}, the \emph{Polyak-{\L}ojaziewicz} condition \cite{karimi2016linear}, and the \emph{restricted secant inequality} \cite{zhang2013gradient}. 

Weakly sharp functions (i.e. HEB with $\theta=1$) have been studied in many papers, for example \cite{burke2002weak,ferris1991finite,pang1997error,shor2012minimization,nedic2010effect,poljak1978nonlinear,yang2015rsg,supittayapornpong2016staggered,attouch2013convergence}. For such functions \cite{ferris1991finite} showed that the proximal point method converges to a minimum in a \emph{finite} number of iterations. This is interesting because this method would otherwise only have an $O(1/k)$ rate. 

\subsection{Our Contributions}
Recall the definition of the subgradient of $h$ at $x$ \cite[Def. 16.1]{bauschke2011convex}:
\begin{eqnarray*}
\partial h(x) \triangleq \{g\in\mathcal{H}:h(y)\geq h(x)+\langle g,y-x\rangle, \forall y\in\mathcal{H}\}.
\label{eq:ineq3}
\end{eqnarray*}
Define the \emph{subgradient method} as 
\begin{eqnarray}\label{iterSG}
 x_{k+1}=P_{\calC}(x_k-\alpha_k g_k):\quad\forall k\geq 1, g_k\in\partial h(x_k),\,\, x_1\in \calC,
 \end{eqnarray}
where $P_{\calC}$ denotes the projection onto $\calC$ and the choice of the \emph{stepsize} $\alpha_k>0$ is left unspecified. 
 Despite the long history of analysis of subgradient methods, the simplest stepsize choices for (\ref{iterSG}) have not been studied for objective functions satisfying HEB. These are the constant stepsize, $\alpha_k=\alpha$, and the decaying stepsize, $\alpha_k=\alpha_1 k^{-p}$ for $p>0$. This brings us to our contributions in this manuscript. 

Firstly we determine the convergence rate of a constant stepsize choice which previously had only been determined for the special case of $\theta=1/2$ (see \cite[Prop. 2.4]{nedic2001convergence}). Interestingly, \emph{for any} $\theta\in(0,1]$ the method obtains a linear convergence rate for $d(x_k,\calX_h)$, up to a specific tolerance level of order $O(\alpha^{\theta})$. 

Secondly, we derive decaying stepsizes which obtain much faster rates than the classical subgradient method if appropriate problem parameters are available.
The classical analysis of the subgradient method leads to the rate
\begin{eqnarray*}
h(\hat{x}_k)-h^*\leq O(k^{-\frac{1}{2}}),
\end{eqnarray*}
where $\hat{x}_k$ is a specific average of the previous iterates and $\alpha_k=O(1/\sqrt{k})$ \cite{nemirovski2009robust}. Combining this with HEB yields
\begin{eqnarray*}\label{ConvClassic}
d(\hat{x}_k,\calX_h)\leq O(k^{-\frac{\theta}{2}}).
\end{eqnarray*}
This rate is slower than the result of our specialized analysis. We show that with stepsize $\alpha_k=\alpha_1 k^{-p}$ and the proper choice of $p$ and $\alpha_1$, the subgradient method can obtain the convergence rate
\begin{eqnarray}\label{optDecay}
d(x_k,\calX_h)\leq O(k^{-\frac{\theta}{2(1-\theta)}}),\quad\forall \theta<1.
\end{eqnarray}
It can be seen that the absolute value of the exponent is a factor $1/(1-\theta)$ larger in our analysis. 

Our third major contribution is  a new ``descending stairs" stepsize choice for the subgradient method (DS-SG). The method achieves the convergence rate given in (\ref{optDecay}) for $\theta<1$. In addition, for the case $\theta=1$ it achieves linear convergence. Unlike the methods of \cite{renegar2015framework,renegar2016efficient} and \cite[Exercise 6.3.3]{bertsekas1999nonlinear}, which also obtain linear convergence when $\theta=1$, our proposal does not require knowledge of $h^*$. The methods of \cite{supittayapornpong2016staggered,shor2012minimization,goffin1977convergence} have a similar complexity for $\theta=1$ but cannot handle $\theta<1$. The Restarted Subgradient method (RSG) \cite{yang2015rsg} obtains the same iteration complexity but requires averaging which is disadvantageous in applications where the solution is sparse (or low rank) because it can spoil this property \cite{davis2017three}. (In Section \ref{Discuss} we discuss other problems with averaging.) An advantage of RSG is it only requires that HEB be satisfied locally, i.e. on a sufficiently-large level set of $h$. However in the important case where $\theta=1$ this makes no difference, because if HEB holds with $\theta=1$ on any compact set, then it holds globally \cite{burke1993weak}. Furthermore for many applications with $\theta<1$, HEB is satisfied globally \cite{bolte2015error,karimi2016linear}.   


DS-SG, RSG, and several other methods \cite{goffin1977convergence,shor2012minimization} require knowledge of the constant $c$ in HEB which can be hard to estimate in practice. This motivates us to develop our final major contribution: a ``doubling trick" for DS-SG which automatically adapts to the unknown error bound constant and still obtains the same iteration complexity, up to a small constant. We call this method  the ``doubling trick descending stairs subgradient method" (DS2-SG).  The competing methods of \cite{yang2015rsg,supittayapornpong2016staggered,shor2012minimization,goffin1977convergence} all require knowledge of $c$. The authors of \cite{yang2015rsg} proposed an adaptive method which does not require $c$, however it only works for $\theta<1$. 

In summary, our contributions under HEB are as follows: 
\begin{enumerate}
\item We show that the subgradient method with a constant stepsize obtains linear convergence for $d(x_k,\calX_h)$ to within a region of the optimal set for all $\theta\in(0,1]$. 
\item We derive a decaying stepsize with faster convergence rate than the classical subgradient method.
\item We develop a new ``Descending Stairs" stepsize with iteration complexity $O(\epsilon^{1-\frac{1}{\theta}})$ when $\theta<1$ and $\ln\frac{1}{\epsilon}$ when $\theta=1$ for finding a point such that $d(x_k,\calX_h)^2\leq\epsilon$. We also develop an adaptive variant which does not need $c$ but retains the same iteration complexity up to a small constant. 
\end{enumerate}
 \begin{table}
  	\caption{Summary of our contributions for constant, decaying (polynomial), DS-SG, and DS2-SG stepsizes. The given convergence rates are for $d(x_k,\calX_h)^2$. We list the cases $\theta=1$ and $\theta<1$ seperately. Goffin \cite{goffin1977convergence} developed a geometrically decaying stepsize which obtains geometric convergence rate for the case $\theta=1$ with known $c$ (see also \cite[Sec. 2.3]{shor2012minimization}.}\label{tableux}
  	\begin{tabular}{|c||c|c|c|c|}
  		\hline
    &constant& decaying & DS-SG & DS2-SG\\		
    \hline
  	$\theta=1$& $q^k+O(\alpha^{2\theta})$ & $O(q^k)$, Goffin \cite{goffin1977convergence}&$O(q^k)$&$O(q^k)$, $c$ not required\\[2ex]
  	\hline 
  	$\theta<1$ & $q^k+O(\alpha^{2\theta})$ & $O\left(k^{\frac{\theta}{\theta-1}}\right)$ 
  	&$O\left(k^{\frac{\theta}{\theta-1}}\right)$ & $O\left(k^{\frac{\theta}{\theta-1}}\right)$, $c$ not required
  	\\[2ex]
  	\hline
  	\end{tabular}
  	
  \end{table}
  Our contributions are summarized in Table \ref{tableux}.
  
The outline for the manuscript is as follows. In Sec. \ref{Sec_subgHist} we discuss some previously known results for subgradient methods applied to functions satisfying HEB. In Sec. \ref{secKeyRec} we derive the key recursion which describes the subgradient method under HEB and allows us to obtain convergence rates. In Sec. \ref{sec_const} we determine the behavior of a constant stepsize. In Sec. \ref{sec_itercomp} we derive a constant stepsize with explicit iteration complexity. In Sec. \ref{secRest} we develop our proposed DS-SG. In Sec. \ref{secAdapt} we develop the variant, DS2-SG, which does not require the error bound constant. In Sec. \ref{sec_sum} we derive a decaying stepsize with faster convergence rate than the classical decaying stepsize. In Sec. \ref{Sec_decay}, we derive convergence rates under HEB for some classical, decaying, and nonsummable stepsizes. These results are proved in Sec. \ref{secProofDecay}. Finally, Sec. \ref{sec_numerical} features numerical experiments to test some of the theoretical findings of this paper. 

\section{Prior Work on Subgradient Methods under HEB}\label{Sec_subgHist}

There were a few early works that studied the subgradient method under conditions related to HEB with $\theta=1$. In \cite[Thm 2.7, Sec. 2.3]{shor2012minimization}, Shor proposed a geometrically decaying stepsize which obtains a linear convergence rate under a condition equivalent to HEB with $\theta=1$. The stepsize depends on explicit knowledge of the error bound constant $c$, a bound on the subgradients, and the initial distance $d(x_1,\calX_h)$. Goffin \cite{goffin1977convergence} extended the analysis of \cite{shor2012minimization} to a slightly more general notion than HEB.\footnote{Our analysis also holds for Goffin's condition.}  Note that our optimal decaying stepsize, derived in Sec. \ref{sec_sum}, is a natural extension of Goffin's geometrically-decaying stepsize to $\theta<1$.  Rosenburg \cite{rosenberg1988geometrically} extended Goffin's results to constrained problems. In \cite{poljak1978nonlinear}, Polyak showed that Goffin's  method still converges linearly when the subgradients are corrupted by bounded, deterministic noise.

The paper \cite{nedic2010effect} also considers functions satisfying HEB with $\theta=1$ with (deterministically) noisy subgradients. For constant stepsizes, they show convergence of $\lim\inf h(x_k)$ to $h^*$ plus a tolerance level depending on noise. For diminishing stepsizes, they show that $\lim\inf h(x_k)$ actually converges to $h^*$ despite the noise. However \cite{nedic2010effect} does not discuss \emph{convergence rates}, which is the topic of our work.

As mentioned in the introduction, \cite{yang2015rsg} introduced the \emph{restarted subgradient method} (RSG) for when $h$ satisfies HEB. The method implements a predetermined number of averaged subgradient iterations with a constant stepsize and then restarts the averaging and uses a new, smaller stepsize. The authors show that after $O(\epsilon^{2(\theta-1)}\log\frac{1}{\epsilon})$ iterations the method is guaranteed to find a point such that $h(x_k)-h^*\leq\epsilon$. For $\theta=1$ this is a logarithmic iteration complexity. This improves the iteration complexity of the classical subgradient method which is $O(\epsilon^{-2})$. Differences between our results and RSG will be discussed in Sec. \ref{secDD_discuss}. 

The recent paper \cite{xu2016accelerate} extends RSG to stochastic optimization. In particular they provide a similar restart scheme that can also handle stochastic subgradient calls, and guarantees $h(x)-h^*\leq\epsilon$ with high probability. The iteration complexity is the same as for RSG, up to a constant. However, this constant is large leading to a large number of inner iterations, making it potentially difficult to implement the method in practice. 

For WS functions, the paper \cite{supittayapornpong2016staggered} introduced a method similar to RSG except it does not require averaging at the end of each constant stepsize phase. The method also obtains a logarithmic iteration complexity in the $\theta=1$ case.  This method is essentially a special case of our proposed DS-SG for $\theta=1$. 

The paper \cite{gilpin2012first} is concerned with a two-person zero-sum game equilibrium problem with a linear payoff structure. The authors show that finding the solution to the equilibrium problem is equivalent to a WS minimization problem. Using this fact, they derive a method based on Nesterov's smoothing technique with logarithmic iteration complexity. This is superior to the $O(1/\epsilon)$ of standard Nesterov smoothing. Connections between our results and \cite{gilpin2012first} are discussed in Section \ref{secDD_discuss}.  

The work \cite{lim2011convergence} studies stochastic subgradient descent under the assumption that the function satisfies WS locally and QG globally. They show a faster convergence rate of the iterates to a minimizer, both in expectation and with high probability, than is known under the classical analysis. 

The work \cite{freund2015new} proposes a new subgradient method for functions satisfying a similar condition to HEB but with $h^*$ replaced by a strict lower bound on $h^*$. Like RSG, this algorithm has a logarithmic dependence on the initial distance to the solution set. However it still obtains an $O(1/\epsilon^2)$ iteration complexity, which is the same as the classical subgradient method.

In \cite{renegar2015framework,renegar2016efficient} Renegar presented a framework for converting a convex conic program to a general convex problem with an affine constraint, to which projected subgradient methods can be applied. He further showed how this can be applied to general convex optimization problems, such as Prob. (\ref{Prob1}), by representing them as a conic problem. For the special case where the objective and constraint set is polyhedral, one of the subgradient methods proposed by Renegar has a logarithmic iteration complexity \cite[Cor. 3.4]{renegar2015framework}. The main drawback of this method is that it requires knowledge of the optimal value, $h^*$. It also requires a point in the interior of the constraint set. Similarly the stepsizes proposed in Thm. 2 of \cite[Sec 5.3.]{PolyakIntro} and \cite[Prop. 2.11]{nedic2001convergence} depend on exact knowledge of $h^*$ and also obtain a logarithmic iteration complexity under WS. 

The work \cite{noll2014convergence} explores subgradient-type algorithms for nonsmooth nonconvex functions satisfying the KL inequality. A procedure was developed for selecting a subgradient at each iteration which results in a decrease in objective value, thereby leading to convergence to a critical point. The selection procedure typically involves either storing a collection of past subgradients and solving a convex program, or suitably backtracking the stepsize until a certain condition is met.

For WS functions, it is known that there are subgradient methods which obtain linear convergence \cite{goffin1977convergence,shor2012minimization,yang2015rsg}. A different assumption, known as partial smoothness, has been used to show \emph{local} linear convergence of proximal gradient methods \cite{hare2004identifying,liang2017activity}. We mention that the partial smoothness property is different to WS: it applies to composite optimization problems with objective: $F = f+h$ where $h$ must be smooth but $f$ may be nonsmooth. Unlike subgradient methods, in proximal gradient methods the nonsmooth part $f$ is addressed via its proximal operator. 

In recent times, convergence analyses for the subgradient method have focused on the objective function rather than the distance of the iterates from the optimal set. However in the early period of development, there were many works focusing on the distance (e.g. \cite{nedic2001convergence,shor2012minimization,poljak1978nonlinear,goffin1977convergence}). The subgradient method is not a descent method with respect to function values, however it is with respect to the distances to the optimal set. Thus the distance is a natural metric to study for the subgradient method.  Furthermore, for some applications, the distance to the solution set arguably matters more than the objective function value. For example in machine learning, the objective function is only a surrogate for the actual objective of interest -- expected prediction error. 

Without further assumptions, \cite[p. 167--168]{PolyakIntro} showed that the convergence rate of the distance of the iterates of the subgradient method to the optimal set can be made arbitrarily slow.
This is true even for smooth convex problems. In this case, gradient descent with a constant stepsize obtains an $O(1/k)$ \emph{objective function} convergence rate, however the iterates can be made to converge arbitrarily slowly to a minimizer. It is our use of HEB which allows us to derive less pessimistic convergence rates for the distance to the optimal set.

\section{The Key Recursion}\label{secKeyRec}
 In this section we derive the recursion which describes the evolution of the squared error $d(x_k,\calX_h)^2$ for the iterates of the standard subgradient method under HEB. The same recursion has been derived many times before for the special cases $\theta=\{1/2,1\}$ (e.g. \cite{goffin1977convergence,shor2012minimization,nedic2001convergence}).
\subsection{Assumptions}
\label{secMathDef}
 The optimality condition for Prob. (\ref{Prob1}) can be found in \cite[Prop. 26.5]{bauschke2011convex}. Note that we do not explicitly use this optimality criterion anywhere in our analysis.
For Prob. (\ref{Prob1}), throughout the manuscript we will assume that $\calC\subseteq\dom(\partial h)$, so that for any query point $x\in\calC$ it is possible to find a $g\in\partial h(x)$. If $h$ is convex and closed, the solution set $\calX_h=\{x:h(x)=h^*\}$ is convex and closed \cite{bauschke2011convex}.
Here are the precise assumptions we will use throughout the manuscript.

{\bf Assumption 3.} (Problem (\ref{Prob1})).
Assume $\calC$ is convex, closed, and nonempty. Assume $h$ is convex, closed, and satisfies HEB$(c,\theta)$. Assume $\calX_h$ is nonempty. Assume $\calC\subseteq\dom(\partial h)$. Assume there exists a constant $G$ such that $\|g\|\leq G$ for all $g\in\partial h(x)$ and $x\in\calC$. 

Throughout the manuscript let $\kappa\triangleq G/c$. 
\subsection{The Recursion under HEB}

 \begin{proposition}\label{Prop_keyRecur}
 Suppose Assumption 3 holds. Then for all $k\geq 1$ for the iterates $\{x_k\}$ of (\ref{iterSG})
 \begin{eqnarray}\label{KeyRecursion}
 d(x_{k+1},\calX_h)^2
 &\leq&
  d(x_k,\calX_h)^2
  -2\alpha_k c(d(x_k,\calX_h)^2)^{\frac{1}{2\theta}}
  +\alpha_k^2 G^2.
 \end{eqnarray}
 \end{proposition}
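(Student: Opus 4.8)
The plan is to run the textbook ``one step of projected subgradient'' computation, but carefully tracking the distance function rather than a fixed minimizer, and then feed in HEB at the last moment. First I would fix an arbitrary iterate $x_k$ and let $\bar{x}_k \triangleq P_{\calX_h}(x_k)$ be its projection onto the solution set. This is well defined because Assumption 3 guarantees $\calX_h$ is convex, closed, and nonempty, so in the Hilbert space $\calH$ the projection exists and attains the distance, i.e. $\|x_k - \bar{x}_k\| = d(x_k,\calX_h)$. The key bookkeeping point is that $\bar{x}_k \in \calX_h \subseteq \calC$, so $P_{\calC}(\bar{x}_k) = \bar{x}_k$.

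Next I would pass from the distance at step $k+1$ to a comparison with the single point $\bar{x}_k$. Since $\bar{x}_k \in \calX_h$, the infimum definition of $d$ gives $d(x_{k+1},\calX_h) \leq \|x_{k+1} - \bar{x}_k\|$. Writing $x_{k+1} = P_{\calC}(x_k - \alpha_k g_k)$ and using that $P_{\calC}$ is nonexpansive together with $\bar{x}_k = P_{\calC}(\bar{x}_k)$, I obtain
\begin{eqnarray*}
d(x_{k+1},\calX_h)^2 \leq \|P_{\calC}(x_k - \alpha_k g_k) - P_{\calC}(\bar{x}_k)\|^2 \leq \|x_k - \alpha_k g_k - \bar{x}_k\|^2.
\end{eqnarray*}
Expanding the right-hand side yields $\|x_k - \bar{x}_k\|^2 - 2\alpha_k \langle g_k, x_k - \bar{x}_k\rangle + \alpha_k^2\|g_k\|^2$, and I would immediately bound $\|g_k\|^2 \leq G^2$ using the subgradient bound in Assumption 3, and identify $\|x_k - \bar{x}_k\|^2 = d(x_k,\calX_h)^2$.

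The remaining work is to lower-bound the cross term $\langle g_k, x_k - \bar{x}_k\rangle$, which is where convexity and HEB enter. Since $g_k \in \partial h(x_k)$, the subgradient inequality evaluated at $\bar{x}_k$ gives $h(\bar{x}_k) \geq h(x_k) + \langle g_k, \bar{x}_k - x_k\rangle$, i.e. $\langle g_k, x_k - \bar{x}_k\rangle \geq h(x_k) - h^*$ because $\bar{x}_k \in \calX_h$ means $h(\bar{x}_k) = h^*$. Now I apply HEB$(c,\theta)$ to $x_k \in \calC$ to get $h(x_k) - h^* \geq c\, d(x_k,\calX_h)^{1/\theta} = c\,(d(x_k,\calX_h)^2)^{1/(2\theta)}$. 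Substituting this and $\|g_k\|^2 \leq G^2$ into the expanded bound produces exactly (\ref{KeyRecursion}). I do not expect a genuine obstacle here; the only points demanding care are ensuring the projection onto $\calX_h$ is legitimate (hence the nonemptiness/closedness/convexity hypotheses) and correctly chaining the distance inequality $d(x_{k+1},\calX_h) \leq \|x_{k+1}-\bar{x}_k\|$ with the nonexpansiveness of $P_{\calC}$, so that HEB can be invoked at $x_k$ rather than at $x_{k+1}$.
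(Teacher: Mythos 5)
Your proposal is correct and is essentially the paper's own proof: both project $x_k$ onto $\calX_h$, bound $d(x_{k+1},\calX_h)$ by the distance to that fixed projection, use nonexpansiveness of $P_{\calC}$ (valid since the projection lies in $\calX_h\subseteq\calC$), expand the square, and then chain the subgradient inequality with HEB at $x_k$. The only cosmetic difference is that the paper phrases the first step via the projection of $x_{k+1}$ onto $\calX_h$ rather than the infimum definition of the distance, which is equivalent.
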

 \begin{proof}
 For the point $x_k$ let $x_k^*$ be the unique projection of $x_k$ onto $\calX_h$.  
 For $k\geq 1$,
 \begin{eqnarray}
 d(x_{k+1},\calX_h)^2
 &=&
 \|x_{k+1}-x_{k+1}^*\|^2
 \nonumber\\
 &\leq&
 \|x_{k+1}-x_k^*\|^2
 \nonumber\\\nonumber
 &\leq&
 d(x_k,\calX_h)^2
 -2\alpha_k\langle g_k,x_k - x_k^*\rangle
 +\alpha_k^2\|g_k\|^2
 \\\nonumber
 &\leq&
 d(x_k,\calX_h)^2
 -2\alpha_k\left(h(x_k)-h^*\right)
 +\alpha_k^2 G^2
 \\\nonumber
 &\leq&
 d(x_k,\calX_h)^2
 -2\alpha_k c(d(x_k,\calX_h)^2)^{\frac{1}{2\theta}}
 +\alpha_k^2 G^2.
 \end{eqnarray}
In the first inequality, we used the fact that $x_{k+1}^*$ is the closest point to $x_{k+1}$ in $\calX_h$. In the second inequality, we used the nonexpansiveness of the projection operator. In the third, we used the convexity of $h$ and in the final inequality we used the error bound. 
 \end{proof}
 
Let $e_k \triangleq d(x_k,\calX_h)^2$ and $\gamma=\frac{1}{2\theta}\in[\frac{1}{2},+\infty)$ then for all $k\geq 1$
 \begin{eqnarray}\label{ABiggy}
 0\leq e_{k+1}\leq e_k - 2\alpha_k c e_k^{\gamma} +\alpha_k^2 G^2.
 \end{eqnarray}
 The main effort of our analysis is in deriving convergence rates for this recursion for various stepsizes.
 
 We note that the key recursion (\ref{KeyRecursion}) can also be derived with different constants in the following extensions: 
 \begin{enumerate}
 	\item For $\theta=1$ a small (relative to $c$) amount of deterministic noise can be added to the subgradient  \cite{nedic2010effect}, 
 	\item 
 	A more general condition than HEB (with $\theta=1$), used in \cite{goffin1977convergence}, can be considered,
 	 	\item Instead of \eqref{iterSG} one can consider
 	the \emph{incremental} subgradient method \cite{nedic2001convergence}, 
 the \emph{proximal} subgradient method \cite{cruz2017proximal},
\begin{align*}
  x_{k+1}= \prox_{\alpha_k f}(x_k-\alpha_k g_k):\quad\forall k\geq 1, g_k\in\partial h(x_k),\,\, x_1\in \dom(\partial h),
\end{align*}
 for minimizing $F(x) = f(x)+h(x)$, so long as the composite function $F$ satisfies HEB and $\dom(\partial h)\subseteq\dom(f)$,
 or the \emph{relaxed} projected subgradient method:
  \begin{align*}
 x_{k+1}=(1-\theta_k)x_k + \theta_kP_\calC(x_k-\alpha_k g_k):\quad\forall k\geq 1, g_k\in\partial h(x_k),\,\, x_1\in \calC,
 \end{align*} 
 so long as $0<\underline{\theta}\leq\theta_k\leq 1$. 
  \end{enumerate}
 \label{secMy}
 
 Extensions 1-2 are discussed in more detail in Sec. \ref{secExtend}.
 
 \section{Constant Stepsize}\label{sec_const}
 Consider the projected subgradient method with \emph{constant}, or fixed, stepsize $\alpha$ given in Algorithm FixedSG.
  \begin{algorithm}
  \caption{(FixedSG)}\label{fix}
  \begin{algorithmic}[1]
  \REQUIRE $K>0$, $\alpha>0$, $x_1\in\calC$
  \FOR{$k=1,2,\ldots,K$}
           \STATE $x_{k+1}= P_{\calC}\left(x_k-\alpha g_k\right):\quad g_k\in\partial h(x_k)$
       \ENDFOR 
       \RETURN $x_{k+1}$
  \end{algorithmic}
  \end{algorithm}
 Previously it was known that if $\theta=1/2$ then this method achieves linear convergence to within a region of the solution set \cite{nedic2001convergence,karimi2016linear}. We show in the next theorem that linear convergence to within a certain region of $\calX_h$ occurs for any $\theta\in(0,1]$ provided $\alpha$ is sufficiently small.
 
 \begin{theorem}\label{ThmFix}
Suppose Assumption 3 holds. Let $e_* = \left(\frac{\alpha G^2}{2c}\right)^{2\theta}$. 
 \begin{enumerate}
 \item For all $k\geq 1$  the iterates of FixedSG satisfy
 \begin{eqnarray}\label{eqBounded}
 d(x_k,\calX_h)^2\leq \max\left\{d(x_1,\calX_h)^2,e_* + \alpha^2 G^2\right\}.
 \end{eqnarray}
 \item If $0<\theta\leq \frac{1}{2}$ 
 and
 \begin{eqnarray}\label{fixed0}
 0<\alpha 
 \leq 
 2^{\frac{1-2\theta}{2(1-\theta)}}\theta^{\frac{1}{2(1-\theta)}}
 G^{\frac{2\theta-1}{1-\theta}}
 c^{\frac{\theta}{\theta-1}}
 \end{eqnarray}
then
 for all $k\geq 1$ the iterates of FixedSG satisfy
 \begin{eqnarray}\label{fixed1}
d(x_k,\calX_h)^2 - e_*\leq q_1^{k-1}
(d(x_1,\calX_h)^2-e_*)
 \end{eqnarray}
 where
 \begin{eqnarray}\label{fixed2}
 q_1=\left(1-\frac{1}{\theta}\alpha c e_*^{\frac{1-2\theta}{2\theta}}\right)\in[0,1).
 \end{eqnarray}

 \item If  $\frac{1}{2}\leq\theta\leq 1$, suppose there exists $D\geq 0$ s.t.  $d(x_k,\calX_h)^2\leq D$ for all $k$, and the stepsize is chosen s.t.
 \begin{eqnarray}\label{linConv}
 0<\alpha\leq  \frac{\theta D^{1-\frac{1}{2\theta}}}{c},
 \end{eqnarray}
 then for all $k\geq 1$ the iterates of FixedSG satisfy
 \begin{eqnarray}\label{ghh}
 d(x_k,\calX_h)^2 - e_*\leq \max\{q_2^{k-1}(d(x_1,\calX_h)^2-e_*),\alpha^2 G^2\},
 \end{eqnarray}
 where
 $$
 q_2=
1-\frac{\alpha c D^{\frac{1}{2\theta}-1}}{\theta}\in[0,1).
 $$
 \end{enumerate}
 \end{theorem}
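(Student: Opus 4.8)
The plan is to reduce everything to the scalar recursion \eqref{ABiggy}, namely $e_{k+1}\le e_k-2\alpha c\,e_k^{\gamma}+\alpha^2 G^2$ with $\gamma=\tfrac{1}{2\theta}$, and to study the associated one-dimensional map $\phi(e)\triangleq e-2\alpha c\,e^{\gamma}+\alpha^2 G^2$, so that $e_{k+1}\le\phi(e_k)$. The quantity $e_*$ is exactly the fixed point of the balance $2\alpha c\,e_*^{\gamma}=\alpha^2 G^2$, which is why $e_*=(\alpha G^2/(2c))^{2\theta}$. Each part then follows from a pointwise estimate on $\phi$ together with an induction on $k$. The only place where the sign of $\gamma-1$ enters is in how $e^{\gamma}$ compares to its tangent and secant lines, and this is precisely what splits the analysis into the convex case $\theta\le\tfrac12$ (where $e\mapsto e^{\gamma}$ is convex) and the concave case $\theta\ge\tfrac12$.

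For Part 1, I would first observe that $\phi(e)<e$ whenever $e>e_*$, since then $2\alpha c\,e^{\gamma}>2\alpha c\,e_*^{\gamma}=\alpha^2 G^2$, while $\phi(e)\le e+\alpha^2 G^2\le e_*+\alpha^2 G^2$ whenever $e\le e_*$. A one-line induction then shows $M\triangleq\max\{e_1,\,e_*+\alpha^2 G^2\}$ is preserved: if $e_k>e_*$ the iterate strictly decreases and stays below $M$, and if $e_k\le e_*$ then $e_{k+1}\le e_*+\alpha^2 G^2\le M$. This gives \eqref{eqBounded}.

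For Part 2 ($0<\theta\le\tfrac12$, i.e.\ $\gamma\ge1$), the key step is the tangent-line inequality for the convex function $e\mapsto e^{\gamma}$, namely $e^{\gamma}\ge e_*^{\gamma}+\gamma e_*^{\gamma-1}(e-e_*)$ for all $e\ge0$. Substituting this into $\phi(e)-e_*=(e-e_*)-2\alpha c(e^{\gamma}-e_*^{\gamma})$ yields $\phi(e)-e_*\le q_1(e-e_*)$ for every $e\ge0$, where $q_1=1-2\gamma\alpha c\,e_*^{\gamma-1}=1-\tfrac{1}{\theta}\alpha c\,e_*^{(1-2\theta)/(2\theta)}$. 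Iterating $e_{k+1}-e_*\le\phi(e_k)-e_*\le q_1(e_k-e_*)$ gives \eqref{fixed1}. It then remains to verify that the stepsize restriction \eqref{fixed0} is exactly the condition $2\gamma\alpha c\,e_*^{\gamma-1}\le1$ forcing $q_1\in[0,1)$; this is a routine, if slightly tedious, rearrangement after inserting $e_*=(\alpha G^2/(2c))^{1/\gamma}$.

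For Part 3 ($\tfrac12\le\theta\le1$, i.e.\ $\gamma\le1$) the tangent inequality reverses, so I would split on the position of $e_k$ relative to $e_*$. When $e_k\le e_*$ the estimate $\phi(e_k)\le e_k+\alpha^2 G^2\le e_*+\alpha^2 G^2$ produces the floor term $\alpha^2 G^2$. When $e_*<e_k\le D$, I apply the mean value theorem to $e\mapsto e^{\gamma}$: since $\gamma-1\le0$ the derivative $\gamma\xi^{\gamma-1}$ is smallest at the largest admissible argument, so $e_k^{\gamma}-e_*^{\gamma}=\gamma\xi^{\gamma-1}(e_k-e_*)\ge\gamma D^{\gamma-1}(e_k-e_*)$, giving $\phi(e_k)-e_*\le q_2(e_k-e_*)$ with $q_2=1-2\gamma\alpha c\,D^{\gamma-1}$. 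The two cases combine into the per-step bound $e_{k+1}-e_*\le\max\{q_2(e_k-e_*),\,\alpha^2 G^2\}$, valid whether or not $e_k$ exceeds $e_*$ because $q_2\ge0$; an induction exploiting $q_2\max\{a,b\}=\max\{q_2 a,q_2 b\}$ together with $q_2<1$ then delivers \eqref{ghh}, with \eqref{linConv} being precisely the condition that makes $q_2\ge0$. The main obstacle is this concave case: unlike Part 2 there is no global linear contraction toward $e_*$, so one must both invoke the a priori bound $D$ to control the secant slope from below and accept the irreducible floor $\alpha^2 G^2$, and then package the two regimes into a single $\max$-type recursion that telescopes cleanly.
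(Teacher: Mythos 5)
Your proposal is correct and follows essentially the same route as the paper: the same rewriting of the recursion around the fixed point $e_*$ (via $2\alpha c\,e_*^{\gamma}=\alpha^2G^2$), the same boundedness argument, the same convex tangent-line contraction for $\theta\le\tfrac12$, and the same two-regime $\max$-recursion with the secant/slope bound $\gamma D^{\gamma-1}$ for $\theta\ge\tfrac12$. The only cosmetic difference is that you obtain the slope bound in Part 3 via the mean value theorem, whereas the paper uses the concavity tangent inequality at $e_k$; these are interchangeable.
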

Note that in part 3 of Theorem \ref{ThmFix}, we assume the existence of a bound $D$ s.t. $d(x_k,\calX_h)^2\leq D$ for all $k\in\mathbb{N}$. Such a bound was provided in part 1 of the theorem. However for the sake of notational clarity we prove part 3 with a generic upper bound $D$. 

 \begin{proof}
 Recall our notation $e_k=d(x_k,\calX_h)^2$ and let $\gamma=\frac{1}{2\theta}$. 
 Returning to the main recursion (\ref{ABiggy}) derived in Prop. \ref{Prop_keyRecur} and replacing the stepsize with a constant yields
 \begin{eqnarray}\label{fixedRecurs}
 0\leq e_{k+1}\leq e_k - 2\alpha c e_k^\gamma + \alpha^2 G^2,
 \end{eqnarray}
 where $\gamma\geq \frac{1}{2}$. 
  The key to understanding the behavior of this recursion is to write it as
 \begin{eqnarray}
 e_{k+1}-e_*\leq e_k - e_* - 2\alpha c (e_k^\gamma - e_*^\gamma)\label{one}
 \end{eqnarray}
 where $e_* = \left(\frac{\alpha G^2}{2c}\right)^{\frac{1}{\gamma}}$.
  We will show that $e_k - e_*$ must go to $0$ and derive the convergence rate.

 \noindent
 {\bf \underline{Boundedness:}}

 \noindent 
 We first prove (\ref{eqBounded}), which says that $e_k$ is bounded. Considering (\ref{one}) we see that since $\alpha>0$ and $c>0$, if $e_k\geq e_*$ then $e_{k+1}\leq e_k$. On the other hand, if $e_k\leq e_*$, then (\ref{fixedRecurs}) yields $e_{k+1}\leq e_k+\alpha^2 G^2\leq e_*+\alpha^2 G^2$. Therefore
 \begin{eqnarray*}
 e_{k+1}\leq\max\{e_k,e_*+\alpha^2 G^2\}\leq\max\{e_1,e_*+\alpha^2 G^2\}.
 \end{eqnarray*}
 
 \noindent
 {\bf\underline{Case 1}: $\boldsymbol{\theta\leq\frac{1}{2}}$.}
 
 \noindent
 For $\theta\leq\frac{1}{2}$, $\gamma\geq 1$ and by the convexity of $t^\gamma$,
 \begin{eqnarray*}
 e_k^\gamma - e_*^\gamma &\geq& \gamma e_*^{\gamma-1}(e_k-e_*).
 \end{eqnarray*}
 Using this in (\ref{one}) along with the facts that $\alpha>0$ and $c>0$ yields 
 \begin{eqnarray*}
 e_{k+1}-e_*\leq  (1-2\alpha c\gamma e_*^{\gamma-1})(e_k - e_*).
 \end{eqnarray*}
 Thus so long as 
 \begin{eqnarray}
 1-2\alpha c\gamma e_*^{\gamma-1}\geq 0,\label{two}
 \end{eqnarray}
 we have $q_1\geq 0$ where $q_1$ is defined in \eqref{fixed2} and
 \begin{align*}
 e_{k+1}-e^*\leq q_1(e_k - e^*)
 \leq
 q_1^k(e_1 - e^*)
 \end{align*}
where the second inequality comes from recursing. This proves \eqref{fixed1}.

Simplifying (\ref{two}) yields
 \begin{eqnarray*}
 2\alpha c \gamma e_*^{\gamma-1}
 &\leq& 1
 \\
 &\implies&
\alpha c \gamma  \left(\frac{\alpha G^2}{2c}\right)^{\frac{\gamma-1}{\gamma}}
 \leq 2^{-1}
 \\
 &\implies&
 \alpha
 \leq 
 \left(
 \frac{1}{\gamma}G^{\frac{2(1-\gamma)}{\gamma}}2^{-\frac{1}{\gamma}}
 c^{-\frac{1}{\gamma}}
 \right)^{\frac{\gamma}{2\gamma-1}}
 \end{eqnarray*}
 which  is equivalent to  (\ref{fixed0}).
 
 \noindent{\bf\underline{Case 2}: $\boldsymbol{\theta\geq\frac{1}{2}}$.}
 
 \noindent
 For $\theta\in[\frac{1}{2},1]$, $\gamma\in[\frac{1}{2},1]$, which implies by concavity
 \begin{eqnarray*}
 e_*^\gamma- e_k^\gamma
 \leq 
 \gamma e_k^{\gamma-1}(e_*-e_k).
 \end{eqnarray*}
 Therefore
 \begin{eqnarray*}
 e_k^\gamma-e_*^{\gamma}
 \geq
 \gamma  e_k^{\gamma-1}
 (e_k-e_*).
 \end{eqnarray*}
 Substituting this inequality into (\ref{one}) and again using $\alpha>0$ and $c>0$ yields
 \begin{eqnarray*}
 e_{k+1}-e_*&\leq& 
 e_k - e_* - 2\alpha c\gamma e_k^{\gamma-1} (e_k - e_*).
 \end{eqnarray*}
  Now if $e_*\leq e_k$, then since $e_k\leq D$,
 \begin{eqnarray*}\label{pas}
 e_{k+1}-e_*&\leq& 
 (1 - 2\alpha c\gamma D^{\gamma-1}) (e_k - e_*) = q_2(e_k - e_*).
 \end{eqnarray*}
 So long as 
 \begin{eqnarray*}
 1>1-2\alpha c\gamma  D^{\gamma-1}\geq 0
 \end{eqnarray*}
(which is implied by (\ref{linConv})), we have $q_2\in [0,1)$. On the other hand if $e_k\leq e_*$ then, using (\ref{fixedRecurs}), $e_{k+1}\leq e_*+ \alpha^2 G^2$. Thus for all $k\geq 1$
\begin{eqnarray*}
e_{k+1}-e_*\leq 
\max\left\{
q_2(e_k-e_*),\alpha^2 G^2
\right\}
.
\end{eqnarray*}
 Iterating this recursion and using the fact that $q_2\in[0,1)$ yields (\ref{ghh}).
 
\end{proof}

 \section{Iteration Complexity for Constant Stepsize}\label{sec_itercomp}
 Using the results of the previous section we can derive the iteration complexity of a constant stepsize for finding a point such that $d(x_k,\calX_h)^2\leq\epsilon$. 
   The basic idea in the following theorem is to pick $\alpha=O(\epsilon^{\frac{1}{2\theta}})$, so that $e_*$ defined in Theorem \ref{ThmFix} is equal to $\epsilon$. Then the iteration complexity can be determined from the linear convergence rate of $d(x_k,\calX_h)^2$ to $e_*$.

  \begin{theorem}\label{ThmFixIterComp}
Suppose Assumption 3 holds. Choose $\epsilon>0$ and set 
\begin{eqnarray}
\label{stepsizeFixed}
   \alpha&=&\frac{2c\epsilon^{\frac{1}{2\theta}}}{G^2}.
\end{eqnarray}
  \begin{enumerate}
  \item If $0<\theta\leq\frac{1}{2}$, and
   \begin{eqnarray}\label{epsbound}
   0<\epsilon&\leq&\left(\frac{\theta \kappa^2}{2}\right)^{\frac{\theta}{1-\theta}},
   \end{eqnarray}
     then for the iterates of FixedSG,
      $$d(x_{k+1},\calX_h)^2\leq 2\epsilon$$ for all $k\geq K$ where 
   \begin{eqnarray}
  K&\triangleq& 
  \frac{1}{2}\theta \kappa^2 \ln\left(\frac{d(x_1,\calX_h)^2}{\epsilon}\right)\epsilon^{1-\frac{1}{\theta}}
.\nonumber
 \end{eqnarray}
   \item For $\frac{1}{2}\leq \theta\leq 1$, assume $\hat{D}>0$ and $\epsilon>0$ are chosen s.t. 
   \begin{align}\label{DinitBound}
  d(x_1,\calX_h)^2 &\leq \hat{D}
  \\
\epsilon&\leq\min
\left\{\frac{\hat{D}}{2},\left(\frac{\theta \kappa^2}{2}\right)^{2\theta}\hat{D}^{2\theta-1}
\right\}.
\label{Dbound}
    \end{align} 
If $\theta<1$ we further require
\begin{eqnarray} 
         \label{epsBound2}
  \epsilon &\leq&\left(\frac{\kappa^2}{4}\right)^{\frac{\theta}{1-\theta}}
   \end{eqnarray}
  and if $\theta=1$, we require $\kappa\geq 2$. Then
for the iterates of FixedSG, 
$$d(x_{k+1},\calX_h)^2\leq 2\epsilon$$ for all $k\geq K$, where
   \begin{eqnarray}
    \label{requiredK}
       K &\triangleq& 
       \frac{1}{2}\theta\kappa^2 \hat{D}^{1-\frac{1}{2\theta}} \ln\left(\frac{d(x_1,\calX_h)^2}{\epsilon}\right)\epsilon^{-\frac{1}{2\theta}}.
   \end{eqnarray}
  \end{enumerate}
  \end{theorem}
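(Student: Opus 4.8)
The plan is to reduce everything to Theorem~\ref{ThmFix}. The first observation is that the stepsize in (\ref{stepsizeFixed}) is engineered precisely so that the target radius $e_*=\left(\tfrac{\alpha G^2}{2c}\right)^{2\theta}$ of Theorem~\ref{ThmFix} equals $\epsilon$: substituting $\alpha=\tfrac{2c\epsilon^{1/(2\theta)}}{G^2}$ gives $\tfrac{\alpha G^2}{2c}=\epsilon^{1/(2\theta)}$ and hence $e_*=\epsilon$. Thus the goal ``$d(x_{k+1},\calX_h)^2\le 2\epsilon$'' is exactly ``$d(x_{k+1},\calX_h)^2-e_*\le\epsilon$'', which is what the geometric estimates of Theorem~\ref{ThmFix} control. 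The rest of the argument is to check that each hypothesis of the present theorem turns one of the structural conditions of Theorem~\ref{ThmFix} into the stated inequality on $\epsilon$ and $\hat D$, and then to convert the geometric rate into the stated iteration count using the elementary bound $-\ln(1-x)\ge x$ for $x\in[0,1)$.

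For Case~1 ($\theta\le\tfrac12$) I would invoke part~2 of Theorem~\ref{ThmFix}. First compute the contraction factor: inserting the stepsize and $e_*=\epsilon$ into (\ref{fixed2}) and collecting exponents (using $\tfrac{1}{2\theta}+\tfrac{1-2\theta}{2\theta}=\tfrac{1-\theta}{\theta}$) gives $q_1=1-\tfrac{2}{\theta\kappa^2}\epsilon^{(1-\theta)/\theta}$. The feasibility requirement $q_1\ge 0$ is then seen to be exactly hypothesis (\ref{epsbound}), so part~2 applies. Evaluating (\ref{fixed1}) at index $k+1$ gives $d(x_{k+1},\calX_h)^2-e_*\le q_1^{k}(d(x_1,\calX_h)^2-e_*)$, and bounding $d(x_1,\calX_h)^2-e_*\le d(x_1,\calX_h)^2$ it suffices to force $q_1^{k}d(x_1,\calX_h)^2\le\epsilon$, i.e.\ $k\ge\ln(d(x_1,\calX_h)^2/\epsilon)/(-\ln q_1)$. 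Using $-\ln q_1\ge\tfrac{2}{\theta\kappa^2}\epsilon^{(1-\theta)/\theta}$ together with $\epsilon^{-(1-\theta)/\theta}=\epsilon^{1-1/\theta}$ produces exactly the threshold $K$ of Case~1, and monotonicity of $q_1^{k}$ delivers the bound for all $k\ge K$.

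For Case~2 ($\tfrac12\le\theta\le1$) I would invoke part~3, which requires a uniform bound $D$ with $d(x_k,\calX_h)^2\le D$. I would first observe that the two ``extra'' hypotheses exist to tame the additive $\alpha^2G^2$ term: a direct computation gives $\alpha^2G^2=\tfrac{4}{\kappa^2}\epsilon^{1/\theta}$, and the requirement $\alpha^2G^2\le\epsilon$ is precisely (\ref{epsBound2}) when $\theta<1$ (via $\tfrac1\theta-1=\tfrac{1-\theta}{\theta}$) and precisely $\kappa\ge 2$ when $\theta=1$. Combining $\alpha^2G^2\le\epsilon$ with $\epsilon\le\hat D/2$ from (\ref{Dbound}) and the boundedness estimate (\ref{eqBounded}) gives $d(x_k,\calX_h)^2\le\max\{\hat D,\,e_*+\alpha^2G^2\}\le\max\{\hat D,2\epsilon\}=\hat D$, so I may take $D=\hat D$. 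Next I would check that the stepsize condition (\ref{linConv}) with $D=\hat D$ is exactly the second term of the minimum in (\ref{Dbound}) (raising $\alpha^{1/(2\theta)}$-type inequalities to the power $2\theta$ and using $2\theta(1-\tfrac{1}{2\theta})=2\theta-1$), so part~3 applies with $q_2=1-\tfrac{2}{\theta\kappa^2}\epsilon^{1/(2\theta)}\hat D^{1/(2\theta)-1}\in[0,1)$. In (\ref{ghh}) at index $k+1$ the $\alpha^2G^2$ branch of the maximum is already $\le\epsilon$, while the geometric branch is $\le\epsilon$ once $k\ge\ln(d(x_1,\calX_h)^2/\epsilon)/(-\ln q_2)$; the same logarithm bound converts this into the threshold $K$ in (\ref{requiredK}).

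I expect the only real work to be bookkeeping rather than a single hard step: the crux is matching each hypothesis of the theorem to a constraint of Theorem~\ref{ThmFix}, namely (\ref{epsbound}) $\Leftrightarrow q_1\ge 0$; the second term of (\ref{Dbound}) $\Leftrightarrow$ (\ref{linConv}) $\Leftrightarrow q_2\ge 0$; (\ref{epsBound2}) (resp.\ $\kappa\ge2$) $\Leftrightarrow\alpha^2G^2\le\epsilon$; and (\ref{DinitBound}) together with the first term of (\ref{Dbound}) yielding the uniform bound $D=\hat D$. The most error-prone part is the exponent algebra when rewriting $q_1$, $q_2$, and $\alpha^2G^2$ in terms of $\kappa=G/c$, since each verification hinges on an identity such as $\tfrac{1}{2\theta}+\tfrac{1-2\theta}{2\theta}=\tfrac{1-\theta}{\theta}$ or $2\theta(1-\tfrac{1}{2\theta})=2\theta-1$; carrying these through correctly is exactly what makes every condition collapse to the clean prefactor $\tfrac12\theta\kappa^2$ appearing in $K$.
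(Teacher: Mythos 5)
Your proposal is correct and follows essentially the same route as the paper's own proof: instantiate Theorem \ref{ThmFix} with $e_*=\epsilon$, match (\ref{epsbound}) to $q_1\geq 0$, use part 1 of Theorem \ref{ThmFix} together with (\ref{DinitBound}), the first term of (\ref{Dbound}), and (\ref{epsBound2}) (resp.\ $\kappa\geq 2$) to justify $D=\hat D$, match the second term of (\ref{Dbound}) to (\ref{linConv}), and convert the geometric rates into iteration counts via $\ln(1-x)\leq -x$. The exponent computations for $q_1$, $q_2$, and $\alpha^2G^2$ in terms of $\kappa$ and $\epsilon$ all check out, so there is nothing to add.
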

  
  \begin{proof} We consider the two cases, $\theta\leq1/2$ and $\theta\geq 1/2$, separately.
  
  \noindent{\bf\underline{Case 1}: $\boldsymbol{\theta\leq\frac{1}{2}}$.}
  
  \noindent
  From Theorem \ref{ThmFix}, the convergence factor in the constant stepsize case is $q_1=1-\frac{\alpha c}{\theta} e_*^{\frac{1}{2\theta}-1}$ where $e_*=\left(\frac{\alpha G^2}{2c}\right)^{2\theta}=\epsilon$ for this choice of $\alpha$ given in \eqref{stepsizeFixed}. Recall the notation $e_k=d(x_k,\calX_h)^2$. 
  Since $\epsilon$ satisfies (\ref{epsbound}), $0\leq q_1<1$. 
  Thus from Theorem \ref{ThmFix} we know that for all $k\geq 1$
  \begin{align*}
  e_{k+1} - e_*\leq q_1^k(e_1 - e^*)\leq q_1^k e_1
  \end{align*}
  which implies
  \begin{align}\label{eqNewalign}
  \max\{e_{k+1}-e_*,0\}\leq q_1^k e_1.
  \end{align}
  This means that  
  $$
  \ln(\max\{0,e_{k+1}-e_*\})\leq k\ln q_1 + \ln e_1
  $$
  using the convention, $\ln(0)=-\infty$. 
  Thus $e_{k+1}-e_*\leq\epsilon$ is implied by
  $$
  k\ln q_1 + \ln e_1\leq \ln\epsilon
  \iff k\geq \frac{\ln\frac{e_1}{\epsilon}}{\ln\frac{1}{q_1}}.
  $$
    Now 
  \begin{eqnarray*}
  \ln q_1=\ln\left(1-\frac{\alpha c}{\theta}e_*^{\frac{1}{2\theta}-1}\right)\leq -\frac{\alpha c}{\theta} e_*^{\frac{1}{2\theta}-1}\iff \ln\frac{1}{q_1}\geq \frac{\alpha c}{\theta} e_*^{\frac{1}{2\theta}-1}.
  \end{eqnarray*}
  Therefore if
  \begin{eqnarray*}
  k\geq 
  \frac{\theta\ln\frac{e_1}{\epsilon}}{\alpha c e_*^{\frac{1}{2\theta}-1}}
  =
  \frac{\theta G^2\ln\frac{e_1}{\epsilon}}{2c^2\epsilon^{\frac{1}{\theta}-1}}
  =
  \frac{1}{2}\theta\kappa^2\ln\left(\frac{e_1}{\epsilon}\right)\epsilon^{1-\frac{1}{\theta}}
  \end{eqnarray*}
  then using the fact that for this choice of $\alpha$, $e_*=\epsilon$, we arrive at
  \begin{eqnarray*}
  e_{k+1}\leq e_* +\epsilon = 2\epsilon.
  \end{eqnarray*}

  \noindent{\bf\underline{Case 2}: $\boldsymbol{\theta\geq \frac{1}{2}}$.}
  
  \noindent
  As before, $\alpha = \frac{2c\epsilon^{\frac{1}{2\theta}}}{G^2}$ which implies $e_*=\epsilon$.
  First note that by Part 1 of Theorem \ref{ThmFix}, 
  \begin{eqnarray*}
  d(x_k,\calX_h)^2
  &\leq& 
  \max\{d(x_1,\calX_h)^2,e_*+\alpha^2 G^2\}
  \\
  &=&
  \max\left\{d(x_1,\calX_h)^2,\epsilon+\frac{4 c^2}{G^2}\epsilon^{\frac{1}{\theta}}\right\}
  \\
  &\leq&
  \max\{d(x_1,\calX_h)^2,2\epsilon\}
  \\
  &\leq& \hat{D}
  \end{eqnarray*}
   for all $k\geq 1$, where in the second inequality we used \eqref{DinitBound} and (\ref{epsBound2}) for the case $\theta<1$, and $\kappa\geq 2$ for when $\theta=1$. Therefore $\hat{D}$ is a valid upper bound for the sequence $\{d(x_k,\calX_h)^2\}$ and can be used in place of $D$ in Theorem \ref{ThmFix}. 
    Now from Theorem \ref{ThmFix} the convergence factor is
   \begin{eqnarray*}
   	q_2=1-\frac{\alpha c}{\theta}\hat{D}^{\frac{1}{2\theta}-1}
   \end{eqnarray*}
   which is greater than or equal to $0$ (and less than $1$) because $\epsilon$ satisfies (\ref{Dbound}).
  Recalling (\ref{ghh}) we see that 
  \begin{eqnarray}\label{newcasesss}
  e_{k+1}\leq \max\{e_*+q_2^{k}(d(x_1,\calX_h)^2-e_*),e_*+\alpha^2 G^2\}.
  \end{eqnarray}
  We have already shown that the second argument in the $\max$ above is upper bounded by $2\epsilon$. 
Consider the first argument in the $\max$ in (\ref{newcasesss}). 
 Now $q_2^{k}(d(x_1,\calX_h)^2-e_*)\leq q_2^{k} e_1$, thus
this argument can be dealt with the same way as Case 1 for $\theta\leq1/2$, except for a different convergence factor. 
 Thus we observe
 \begin{eqnarray*}
  \ln q_2=\ln\left(1-\frac{\alpha c}{\theta} \hat{D}^{\frac{1}{2\theta}-1}\right)\leq -\frac{\alpha c}{\theta}  \hat{D}^{\frac{1}{2\theta}-1}\iff \ln\frac{1}{q_2}\geq \frac{\alpha c}{\theta} \hat{D}^{\frac{1}{2\theta}-1}.
  \end{eqnarray*}
  Therefore if 
  \begin{eqnarray*}
  k
  \geq
   \frac{\theta G^2 \hat{D}^{1-\frac{1}{2\theta}}}{2c^2}\ln\left(\frac{e_1}{\epsilon}\right)\epsilon^{-\frac{1}{2\theta}}
  \end{eqnarray*}
 then the first argument in the $\max$ in (\ref{newcasesss}) is upper bounded by $2\epsilon$.

  \end{proof} 
  
  Rather surprisingly, Theorem \ref{ThmFixIterComp} shows that a restarting strategy is not necessary for $\theta\leq\frac{1}{2}$. This is because for $\theta\leq\frac{1}{2}$ the iteration complexity for a constant stepsize is equal to the complexity of RSG derived in \cite{yang2015rsg}. It is also matched by the optimal decaying stepsize we will derive in Sec. \ref{sec_sum}. 
     To compare with RSG in more detail, \cite{yang2015rsg} showed that RSG requires $O(\epsilon'^{2(\theta-1)})$ iterations (suppressing constants and a $\ln\frac{1}{\epsilon}$ factor) to achieve $h(x)-h^*\leq\epsilon'$. Now, using the error bound, in order to guarantee $d(x_k,\calX_h)^2\leq\epsilon$, we need $h(x)-h^*\leq \epsilon'=\epsilon^{\frac{1}{2\theta}}$. Using this in the iteration complexity from \cite{yang2015rsg} yields the expression $O(\epsilon^{1-\frac{1}{\theta}})$, which is the same as what we derived for the constant stepsize for $\theta\leq 1/2$. However, for $\theta>\frac{1}{2}$, RSG, our DS-SG method, and our optimal decaying stepsize are significantly faster than the constant stepsize choice.
     For $\theta=1/2$, the iteration complexity of the constant stepsize derived in Theorem \ref{ThmFixIterComp} depends on $\ln d(x_1,\calX_h)$, and has the same dependence on $\epsilon$ as the other methods. This remarkable property makes it preferable to the other more sophisticated methods in this case. 
    
    The comparison with the classical result for the subgradient method is as follows. It is easy to show that for the subgradient method with a constant stepsize $\alpha$:
    \begin{eqnarray*}
    \frac{1}{k}\sum_{i=1}^k(h(x_i)-h^*)
    \leq
    \frac{d(x_1,\calX_h)^2}{2\alpha k}+\frac{\alpha}{2}G^2.
    \end{eqnarray*}
    Setting 
    $$\alpha=\frac{c\epsilon^{\frac{1}{2\theta}}}{G^2}
    $$
     and 
    \begin{eqnarray*}\label{classicalIter}
   k\geq \kappa^2 d(x_1,\calX_h)^2\epsilon^{-1/\theta}
    \end{eqnarray*} 
    implies 
    $$
    h(x_k^{av})-h^*\leq \frac{1}{k}\sum_{i=1}^k(h(x_i)-h^*)\leq  c\epsilon^{1/2\theta}
    $$ 
    where $x_k^{av}=\frac{1}{k}\sum_{i=1}^k x_i$. Now using the error bound, this yields $d(x_k^{av},\calX_h)^2\leq\epsilon$. With respect to $\epsilon$, this classical iteration complexity is clearly worse than the result of Theorem \ref{ThmFix} for all $\theta\in(0,1]$. Furthermore, the dependence on $d(x_1,\calX_h)$ is worse. For $\theta\leq1/2$, the fixed stepsize depends on $\ln d(x_1,\calX_h)$, whereas the classical stepsize has iteration complexity which depends linearly on $d(x_1,\calX_h)$. 
    
    
    
    We note that as $\theta\to 0$ the iteration complexity can be made arbitrarily large. This is not suprising, as it has been proved in \cite[p. 167-168]{PolyakIntro} that the convergence rate of $x_k\to x^*$ can be made arbitrarily bad for gradient methods.

 \section{A ``Descending Stairs" Stepsize with Better Iteration Complexity for $1/2\leq \theta\leq 1$}
 \label{secRest}
 \subsection{The Method}
  In this section we propose a new stepsize for the subgradient method (DS-SG) which obtains a better iteration complexity than the fixed stepsize for functions satisfying HEB with $1/2\leq\theta\leq 1$. In fact for $\theta=1$ the iteration complexity is logarithmic, i.e. $O(\ln\frac{1}{\epsilon})$. 
  The basic idea is to use a constant stepsize in the subgradient method and every $K$ iterations reduce the stepsize by a factor of $\beta_{ds}^{\frac{1}{2\theta}}>1$. Also the number of iterations $K$ increases by a factor $\beta_{ds}^{\frac{1}{\theta}-1}$. Our analysis allows us to determine good choices for the initial stepsize and number of iterations which lead to an improved rate. 

 The algorithm is similar to RSG \cite{yang2015rsg}. However our  method has some important advantages, which will be discussed in Sec. \ref{secDD_discuss}, and a different analysis. As was mentioned earlier, the method of \cite[Sec. V]{supittayapornpong2016staggered} is essentially a special case of DS-SG for $\theta=1$.

  DS-SG requires an upper bound on the distance of the starting point to the solution, i.e. $\Omega_1\geq d(x_{\text{init}},\calX_h)^2$. If $\calC$ is bounded then one can use the diameter of $\calC$. If a lower bound on the optimal value is known, i.e. $h_{l}\leq h^*$, then by the error bound $d(x_1,\calX_h)\leq c^{-\theta}\left(h(x_1)-h^*\right)^\theta\leq c^{-\theta}\left(h(x_1)-h_l\right)^\theta$ implies we can use $\Omega_1=c^{-2\theta}\left(h(x_1)-h_l\right)^{2\theta}$.
 
 \begin{algorithm}
 
   \caption{(DS-SG) Descending Stairs Subgradient Method for $1/2\leq \theta\leq 1$}
   \begin{algorithmic}[1]
   \REQUIRE $\beta_{ds}$, $M$, $x_{\text{init}}$, $\Omega_1$, $G$, $c$, $\theta$.
   \STATE  $\kappa=\frac{G}{c}$ 
   \STATE \label{Kline1} $\tilde{K}_1=
     \theta
 \kappa^2\beta_{ds}^{\frac{1}{2\theta}}\ln\left(2\beta_{ds}\right)
      \Omega_1^{1-\frac{1}{\theta}}
         $
   \STATE $K_1 = \lceil\tilde{K}_1\rceil$
   \STATE\label{alphaline1} $\alpha(1)=\frac{2c}{G^2}\left(\frac{\Omega_1}{2\beta_{ds}}\right)^{\frac{1}{2\theta}}$
   \STATE $\hat{x}_0=x_{\text{init}}$
   \FOR{$m=1,2,\ldots,M$}
      \STATE    $\hat{x}_m = \text{FixedSG}(K_m,\alpha(m),\hat{x}_{m-1})$\label{LineXhat}
      \STATE \label{alphaline2} $\alpha(m+1) = \beta_{ds}^{-\frac{1}{2\theta}}\alpha(m)$
      \STATE $K_{m+1}=\left\lceil\beta_{ds}^{\frac{m(1-\theta)}{\theta}}\tilde{K}_1\right\rceil$\label{Kline2}
   \ENDFOR
   \RETURN $\hat{x}_M$ 
   \end{algorithmic}
   \label{ReSG}
\end{algorithm}
 
 \begin{theorem}\label{thmRestart}
Suppose Assumption 3 holds and $\frac{1}{2}\leq \theta\leq 1$. Choose $x_{\text{init}}\in\calC$ and $\Omega_1$ such that $d(x_{\text{init}},\calX_h)^2\leq\Omega_1$. If $\theta<1$, choose $\beta_{ds}>1$ so that
\begin{eqnarray}\label{beta_bound}
\beta_{ds}&\geq& 
\max\left\{\frac{1}{2}
\left(\frac{\kappa^2}{4}\right)^{\frac{\theta}{\theta-1}}\Omega_1,
\theta^{-2\theta}\kappa^{-4\theta}\Omega_1^{2(1-\theta)}
\right\}.
\end{eqnarray}
If $\theta=1$, assume $\kappa\geq 2$ and choose any $\beta_{ds}>1$.
  Fix $\epsilon>0$ and choose $M\geq\left\lceil\frac{\ln\frac{\Omega_{1}}{\epsilon}}{\ln\beta_{ds}}\right\rceil$. Then for $\hat{x}_M$ returned by Algorithm DS-SG, $d(\hat{x}_M,\calX_h)^2\leq \epsilon$. The iteration complexity is as follows:
  \begin{enumerate}
  \item If $\theta=1$ this requires fewer than
  \begin{eqnarray}
     &&\left(\beta_{ds}^{\frac{1}{2}}  \kappa^2 \ln(2\beta_{ds})+1\right)
   \left(\frac{\ln\frac{\Omega_1}{\epsilon}}{\ln\beta_{ds}}+1\right)
  \label{Th1ResultA}\label{Th1Result}
  \end{eqnarray}
  subgradient evaluations. This simplifies to 
  \begin{align}\label{bigO1}
  O\left(\kappa^2\ln\frac{\Omega_1}{\epsilon}\right)
  \end{align}
  as $\kappa,\Omega_1\to\infty$, and $\epsilon\to 0$.
  \item If $\frac{1}{2}\leq\theta<1$, this requires fewer than
  \begin{eqnarray}
\frac{\theta
	\beta_{ds}^{\frac{3}{2\theta}-1}\ln(2\beta_{ds})}{\beta_{ds}^{\frac{1}{\theta}-1}-1
} 
\kappa^2
\epsilon^{1-\frac{1}{\theta}}
+
\frac{\ln\frac{\Omega_1}{\epsilon}}{\ln\beta_{ds}}+1  \label{scoreA}
  \end{eqnarray}
    subgradient evaluations. If $\kappa$ is chosen large enough so that $\Omega_1 = O(\kappa^{\frac{2\theta}{1-\theta}})$,
this simplifies to 
  \begin{align}\label{bigO}
  O\left(
\max\{\kappa^2,\Omega^{\frac{1}{\theta}-1}\}\epsilon^{1-\frac{1}{\theta}}
  \right)
  \end{align}
  as $\kappa,\Omega_1\to\infty$, and $\epsilon\to 0$.

      \end{enumerate}
 \end{theorem}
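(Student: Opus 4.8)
The plan is to view DS-SG as a sequence of restarts of the constant-stepsize method FixedSG, and to drive the entire analysis through Theorem \ref{ThmFixIterComp} (Case 2, since $\theta\geq 1/2$). The central device is a geometrically shrinking family of distance bounds: set $\Omega_m\triangleq\Omega_1\beta_{ds}^{-(m-1)}$ and per-stage target $\epsilon_m\triangleq\Omega_m/(2\beta_{ds})$, so that $2\epsilon_m=\Omega_m/\beta_{ds}=\Omega_{m+1}$. First I would check that the algorithm's parameters are exactly those prescribed by Theorem \ref{ThmFixIterComp} with $\hat D=\Omega_m$ and $\epsilon=\epsilon_m$. The stepsize $\frac{2c}{G^2}(\Omega_m/(2\beta_{ds}))^{1/(2\theta)}=\frac{2c\epsilon_m^{1/(2\theta)}}{G^2}$ matches \eqref{stepsizeFixed}, and the update $\alpha(m+1)=\beta_{ds}^{-1/(2\theta)}\alpha(m)$ reproduces $\alpha(m)=\alpha(1)\beta_{ds}^{-(m-1)/(2\theta)}$. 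Substituting $\epsilon_m$ into \eqref{requiredK} gives the required stage length $K_m^{\mathrm{req}}=\tfrac12\theta\kappa^2\ln(2\beta_{ds})(2\beta_{ds})^{1/(2\theta)}\Omega_m^{1-1/\theta}$; since $\tilde K_1/K_1^{\mathrm{req}}=2^{1-1/(2\theta)}\geq 1$ and $\Omega_m^{1-1/\theta}=\Omega_1^{1-1/\theta}\beta_{ds}^{(m-1)(1/\theta-1)}$, the algorithm's $K_m=\lceil\beta_{ds}^{(m-1)(1/\theta-1)}\tilde K_1\rceil$ dominates $K_m^{\mathrm{req}}$.

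The core is then a short induction on $m$. The base case is $d(\hat x_0,\calX_h)^2=d(x_{\mathrm{init}},\calX_h)^2\leq\Omega_1$. For the inductive step, assuming $d(\hat x_{m-1},\calX_h)^2\leq\Omega_m$, I would apply Theorem \ref{ThmFixIterComp}(2) to the stage-$m$ run of FixedSG started at $\hat x_{m-1}$; because that stage performs at least $K_m^{\mathrm{req}}$ iterations, it returns $\hat x_m$ with $d(\hat x_m,\calX_h)^2\leq 2\epsilon_m=\Omega_{m+1}$, closing the induction. After $M$ stages this yields $d(\hat x_M,\calX_h)^2\leq\Omega_{M+1}=\Omega_1\beta_{ds}^{-M}$, and the choice $M\geq\lceil\ln(\Omega_1/\epsilon)/\ln\beta_{ds}\rceil$ forces $\Omega_1\beta_{ds}^{-M}\leq\epsilon$, proving the accuracy claim.

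The part requiring care — and what I expect to be the main obstacle — is verifying that the hypotheses \eqref{DinitBound}, \eqref{Dbound}, and \eqref{epsBound2} (or $\kappa\geq 2$ when $\theta=1$) hold at \emph{every} stage $m$. Condition \eqref{DinitBound} is the inductive hypothesis, and $\epsilon_m\leq\hat D/2$ follows from $\beta_{ds}\geq 1$. The two substantive constraints, $\epsilon_m\leq(\theta\kappa^2/2)^{2\theta}\Omega_m^{2\theta-1}$ and, for $\theta<1$, $\epsilon_m\leq(\kappa^2/4)^{\theta/(1-\theta)}$, each rearrange into a lower bound on $\beta_{ds}$ proportional (up to an absolute constant) to $\Omega_m^{2(1-\theta)}$ and to $\Omega_m$ respectively. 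The observation that makes \eqref{beta_bound} sufficient is monotonicity: since $\theta\leq 1$ both exponents $2(1-\theta)$ and $1$ are nonnegative while $\Omega_m\leq\Omega_1$, so each right-hand side is maximized at $m=1$; it therefore suffices to impose the two bounds with $\Omega_1$ in place of $\Omega_m$, which is exactly the content of \eqref{beta_bound}. When $\theta=1$ the first constraint collapses to $\beta_{ds}\geq 2/\kappa^4$, met automatically once $\kappa\geq 2$, and \eqref{epsBound2} is vacuous.

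Finally, for the iteration counts I would sum the stage lengths using $K_m\leq\beta_{ds}^{(m-1)(1/\theta-1)}\tilde K_1+1$ and $M\leq\ln(\Omega_1/\epsilon)/\ln\beta_{ds}+1$. When $\theta=1$ the exponent vanishes, so $\sum_{m=1}^M K_m\leq M(\tilde K_1+1)$ with $\tilde K_1=\beta_{ds}^{1/2}\kappa^2\ln(2\beta_{ds})$, giving \eqref{Th1Result}. When $\theta<1$ the stage lengths form a geometric progression with ratio $\beta_{ds}^{1/\theta-1}>1$; summing it and bounding $\beta_{ds}^{M(1/\theta-1)}\leq(\beta_{ds}\Omega_1/\epsilon)^{1/\theta-1}$ (from $\beta_{ds}^M\leq\beta_{ds}\Omega_1/\epsilon$) cancels the $\Omega_1$ powers against the $\Omega_1^{1-1/\theta}$ inside $\tilde K_1$, leaving the leading term $\frac{\theta\beta_{ds}^{3/(2\theta)-1}\ln(2\beta_{ds})}{\beta_{ds}^{1/\theta-1}-1}\kappa^2\epsilon^{1-1/\theta}$ plus the $M$ term, which is \eqref{scoreA}; the stated $O(\cdot)$ forms then follow by treating $\beta_{ds}$ as a fixed constant.
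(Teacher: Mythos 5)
Your strategy is the paper's own: treat DS-SG as restarted FixedSG, push each stage through part 2 of Theorem \ref{ThmFixIterComp}, induct on the stage index with geometrically shrinking distance bounds, and sum a geometric series for the complexity (in the paper's notation your $\Omega_m$ is its $\epsilon_{m-1}$ and your $\epsilon_m$ is its $\epsilon_m/2$). The stage-length comparison, the accuracy conclusion, and the summations leading to \eqref{Th1Result} and \eqref{scoreA} all check out. But there is a concrete gap in your verification of \eqref{Dbound}: with your instantiation $\hat D=\Omega_m$, $\epsilon=\epsilon_m=\Omega_m/(2\beta_{ds})$, the second constraint in \eqref{Dbound} rearranges to
\begin{equation*}
\frac{\Omega_m}{2\beta_{ds}}\leq\left(\frac{\theta\kappa^2}{2}\right)^{2\theta}\Omega_m^{2\theta-1}
\quad\Longleftrightarrow\quad
\beta_{ds}\geq 2^{2\theta-1}\,\theta^{-2\theta}\kappa^{-4\theta}\,\Omega_m^{2(1-\theta)},
\end{equation*}
whose worst case ($m=1$) is $\beta_{ds}\geq 2^{2\theta-1}\theta^{-2\theta}\kappa^{-4\theta}\Omega_1^{2(1-\theta)}$. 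That is \emph{stronger} than the second term of \eqref{beta_bound} by the factor $2^{2\theta-1}$, which exceeds $1$ for every $\theta\in(\tfrac12,1)$. So your claim that the per-stage constraint ``is exactly the content of \eqref{beta_bound}'' is true only at $\theta=\tfrac12$ (and at $\theta=1$, where $\kappa\geq 2$ absorbs the factor); for $\theta\in(\tfrac12,1)$ the proof as written does not establish the theorem under its stated hypothesis.

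The repair is precisely the paper's parameter choice: run the induction with $\hat D=2\Omega_m$ rather than $\Omega_m$. This is still a legitimate upper bound, since your inductive hypothesis gives $d(\hat x_{m-1},\calX_h)^2\leq\Omega_m\leq 2\Omega_m$ (the paper's $D_m=2\beta_{ds}\epsilon_m^{\mathrm{paper}}$ equals $2\Omega_m$ in your notation). With this $\hat D$ the factor $(2\Omega_m)^{2\theta-1}$ supplies exactly the missing $2^{2\theta-1}$, and the constraint rearranges to $\beta_{ds}\geq\theta^{-2\theta}\kappa^{-4\theta}\Omega_1^{2(1-\theta)}$, i.e.\ to \eqref{beta_bound} verbatim. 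The cost is that the required stage length \eqref{requiredK}, which scales as $\hat D^{1-\frac{1}{2\theta}}$, inflates by the factor $2^{1-\frac{1}{2\theta}}$ --- exactly the headroom $\tilde K_1/K_1^{\mathrm{req}}=2^{1-\frac{1}{2\theta}}$ you computed, so the algorithm's $K_m$ still dominates (now with equality before rounding) and every subsequent step of your argument goes through unchanged.
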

 \begin{proof}
 We need some new notation. For $\hat{x}_m$ defined in line \ref{LineXhat} of DS-SG, let $\hat{e}_m=d(\hat{x}_m,\calX_h)^2$. We will use a sequence of tolerances $\{\epsilon_m\}$ defined as $\epsilon_m=\beta_{ds}^{-m}\Omega_1$. Another sequence $\{D_m\}$ is chosen as 
 $$D_m = 2\beta_{ds}\epsilon_m.$$ 
   For each $m\geq 1$, the set $\{\epsilon_m/2,D_m,\alpha(m)\}$ will be used in statement 2 of Theorem \ref{ThmFixIterComp} in place of $\{\epsilon,\hat{D},\alpha\}$. Furthermore we will show that $K_m$ is greater than the corresponding expression for $K$ in \eqref{requiredK}. This will show that $\hat{e}_m\leq 2(\epsilon_m/2)=\epsilon_m$.

   We now show that $\{\epsilon_m/2,D_m,\alpha(m)\}$ satisfies (\ref{stepsizeFixed}), \eqref{DinitBound}, (\ref{Dbound}),  and (\ref{epsBound2}), and that $K_m$ is greater than $K$ given in (\ref{requiredK}). Now
the stepsize $\alpha(m)$, defined on lines \ref{alphaline1} and \ref{alphaline2} of DS-SG, can be written as
   \begin{eqnarray*}
   \alpha(m) = \frac{2 c}{ G^2}\left(\frac{\epsilon_m}{2}\right)^{\frac{1}{2\theta}}.
   \end{eqnarray*}
       Thus $\alpha(m)$ satisfies (\ref{stepsizeFixed}) for all $m\geq 1$. 
          Next we prove that for $\frac{1}{2}\leq \theta<1$, condition (\ref{beta_bound}) ensures that \eqref{Dbound}--(\ref{epsBound2}) are satisfied for all $m\geq 1$. We also show that for $\theta=1$, \eqref{Dbound} is implied by $\kappa\geq 2$ (recall that \eqref{epsBound2} is only required for $\frac{1}{2}\leq\theta<1$). 
          
             To establish \eqref{Dbound}, we will prove that both arguments in the $\min$ in \eqref{Dbound} individually satisfy the inequality when $\hat{D}$ and $\epsilon$ are replaced by $D_m$ and $\epsilon_{m}/2$.
          Since $\beta_{ds}>1$, it is clear that the first argument in the $\min$ in \eqref{Dbound} satisfies the inequality.
Now for the second argument in the $\min$ in (\ref{Dbound}) to satisfy the inequality we require
\begin{eqnarray*}
	\frac{\epsilon_m}{2}\leq 
	\left(\frac{\theta\kappa^2}{2}\right)^{2\theta}
	D_m^{2\theta-1} = 
	\frac{1}{2}\left(\theta\kappa^2\right)^{2\theta}
	\beta_{ds}^{2\theta-1}\epsilon_m^{2\theta-1}.
\end{eqnarray*}
Using $\epsilon_m=\beta_{ds}^{-m}\Omega_1$ and rearranging this yields
\begin{eqnarray}
	\beta_{ds}^{2m(1-\theta)+2\theta-1}\geq 
	\theta^{-2\theta}\kappa^{-4\theta}
	\Omega_1^{2(1-\theta)}.\label{missed}
\end{eqnarray}
In order to hold for all $m\geq 1$ it suffices to show it holds for $m=1$, which is implied by the second argument in the max in (\ref{beta_bound}).
In the case $\theta=1$,
\eqref{missed} reduces to 
\begin{align*}
\beta_{ds}\geq \frac{1}{\kappa^4}.
\end{align*}
Since $\kappa\geq 2$, any $\beta_{ds}>1$ satisfies this.

           Now \eqref{epsBound2} is only required when $\frac{1}{2}\leq\theta<1$. In this case, \eqref{epsBound2} requires that
   \begin{eqnarray*}
   \frac{\epsilon_m}{2} = \frac{1}{2}\beta_{ds}^{-m}\Omega_1
      \leq
      \left(\frac{\kappa^2}{4}\right)^{\frac{\theta}{1-\theta}}.
   \end{eqnarray*}
   In order for this to be satisfied for all $m$, it suffices to show that it holds for $m=1$. This is implied by the first argument in the max function in (\ref{beta_bound}). 

    Finally we prove by induction that \eqref{DinitBound} holds and that $K_m$ is greater than $K$ defined in (\ref{requiredK}). For $m=1$, $D_1$ clearly satisfies \eqref{DinitBound}.   Also $K_1$, given in Line 1 of Algorithm DS-SG, satisfies (\ref{requiredK}).  Altogether this implies $\hat{e}_1\leq\epsilon_1$ by Theorem \ref{ThmFixIterComp}.

   Next, assume \eqref{DinitBound}  is true and $K_{m-1}$ is greater than $K$ in \eqref{requiredK} at iteration $m-1$. Since we have established \eqref{stepsizeFixed}, \eqref{Dbound}, and \eqref{epsBound2} hold for all $m\geq 1$, part 2 of Theorem \ref{ThmFix} implies that $\hat{e}_{m-1}\leq\epsilon_{m-1}$. At iteration $m$, FixedSG is initialized at $\hat{x}_{m-1}$, and $d(\hat{x}_{m-1},\calX)^2\leq\epsilon_{m-1}$, thus 
   $$
   D_m=2\beta_{ds}\epsilon_m = 2\epsilon_{m-1}\geq d(\hat{x}_{m-1},\calX)^2
   $$ 
   which establishes \eqref{DinitBound} at iteration $m$.
   Next, substituting $D_m$ and $\epsilon_m/2$ in for $\hat{D}$ and $\epsilon$ in \eqref{requiredK}, we see that $K_m$ needs to be greater than
    \begin{eqnarray*}
\frac{1}{2}\theta\kappa^2
    \ln\left(\frac{2d(\hat{x}_{m-1},\calX_h)^2}{\epsilon_m}\right)
    D_m^{1-\frac{1}{2\theta}}(\epsilon_m/2)^{-\frac{1}{2\theta}}\label{thisOne}
    \end{eqnarray*}
    which is indeed true since
$K_m$ can be re-expressed as
    \begin{align*}
    K_m&=\left\lceil 
    \theta\kappa^2\beta_{ds}^{\frac{1}{2\theta}}\ln(2\beta_{ds})\Omega_1^{1-\frac{1}{\theta}}\beta_{ds}^{-(m-1)(1-\frac{1}{\theta})}
    \right\rceil 
    \\    
    &=
    \left\lceil\theta\kappa^2
    \beta_{ds}^{1-\frac{1}{2\theta}}\ln\left(2\beta_{ds}\right)
    \epsilon_m^{1-\frac{1}{\theta}}\right\rceil
    \\
    &\geq 
    \frac{1}{2}\theta\kappa^2
    \ln\left(\frac{2d(\hat{x}_{m-1},\calX_h)^2}{\epsilon_m}\right)
    (2\beta_{ds}\epsilon_m)^{1-\frac{1}{2\theta}}(\epsilon_m/2)^{-\frac{1}{2\theta}}.
    \end{align*}

    We have shown that $\{\epsilon_m/2,D_m,\alpha(m)\}$ satisfies (\ref{stepsizeFixed}), \eqref{DinitBound}, (\ref{Dbound}), and (\ref{epsBound2}), and that $K_m$ is greater than $K$ defined in (\ref{requiredK}).
 Thus by part 2 of Theorem \ref{ThmFix}, for all $m\geq 1$ $\hat{e}_m\leq 2(\epsilon_m/2)=\epsilon_{m}$. 
  Finally the choice $M = \left\lceil \frac{\ln\frac{\Omega_1}{\epsilon}}{\ln\beta_{ds}}\right\rceil$ implies $\epsilon_M=\beta_{ds}^{-M}\Omega_1\leq\epsilon$.
  
 If $\theta=1$, the total number of subgradient evaluations is
 \begin{eqnarray*}
 M K_1
&\leq& 
 \left(\kappa^2 \beta_{ds}^{\frac{1}{2}}\ln(2\beta_{ds})+1\right)
\left(\frac{\ln\frac{\Omega_1}{\epsilon}}{\ln\beta_{ds}}+1\right)
 \end{eqnarray*}
where we have used $\lceil x \rceil< x+1$. 
Further note that for $\theta=1$, $\beta_{ds}$ is a constant that can be chosen independently of $\kappa, \Omega$, and $\epsilon$,  which implies \eqref{bigO1}.

 We now establish the iteration complexity when $\frac{1}{2}\leq\theta<1$.
For $m\geq 0$, let 
\begin{align}\label{defKtild}
\tilde{K}_{m+1} =      \beta_{ds}^{\frac{m(1-\theta)}{\theta}}\theta
\kappa^2\beta_{ds}^{\frac{1}{2\theta}}\ln\left(2\beta_{ds}\right)
\Omega_1^{1-\frac{1}{\theta}}
\end{align}
 
then $K_m = \lceil \tilde{K}_m\rceil$ where $K_m$ is defined on Line \ref{Kline2} of Algorithm \ref{ReSG}. 
If $\theta<1$ 
the total number of subgradient evaluations is
 \begin{eqnarray}\nonumber
    K_1 + K_2 +\ldots+ K_M
    &=&
 \lceil \tilde{K}_1\rceil +\lceil \tilde{K}_2\rceil +\ldots +\lceil \tilde{K}_M\rceil
 \\\nonumber
 &<&
   \tilde{K}_1 + \tilde{K}_2 +\ldots+ \tilde{K}_M
  +
  M
  \\\nonumber 
 &=&
 \tilde{K}_1
\left(
1+\beta_{ds}^{\frac{1}{\theta}-1}
+
(\beta_{ds}^{\frac{1}{\theta}-1})^2
+
\ldots
+
(\beta_{ds}^{\frac{1}{\theta-1}})^{M-1}
\right)
\\\nonumber
&&+M
\\\nonumber
&=&
\tilde{K}_1\frac{(\beta_{ds}^{\frac{1}{\theta}-1})^M - 1}{\beta_{ds}^{\frac{1}{\theta}-1}-1}
+
M
\\\label{totalComp} 
&\leq&
\tilde{K}_1\frac{(\beta_{ds}^{\frac{1}{\theta}-1})^M}{\beta_{ds}^{\frac{1}{\theta}-1}-1}
+M
.
 \end{eqnarray}
Now since
\begin{align}
M\leq \frac{\ln\frac{\Omega_1}{\epsilon}}{\ln\beta_{ds}}+1\label{Mupper}
\end{align} 
it follows that
\begin{eqnarray}
(\beta_{ds}^{\frac{1}{\theta}-1})^M\leq 
\beta_{ds}^{\frac{1}{\theta}-1}
\left(\frac{\Omega_1}{\epsilon}\right)^{\frac{1}{\theta}-1}.\label{do}
\end{eqnarray}
Finally, substitute \eqref{Mupper}, (\ref{do}), and the expression for $\tilde{K}_1$ into  (\ref{totalComp}) to obtain the iteration complexity 
\begin{eqnarray}\label{eq25new} 
\frac{\theta
	\beta_{ds}^{\frac{3}{2\theta}-1}\ln(2\beta_{ds})}{\beta_{ds}^{\frac{1}{\theta}-1}-1
} 
\kappa^2
\epsilon^{1-\frac{1}{\theta}}
+
\frac{\ln\frac{\Omega_1}{\epsilon}}{\ln\beta_{ds}}+1
\end{eqnarray}
total subgradient evaluations,
which is \eqref{scoreA}. 

Now onto \eqref{bigO}. We derive the limiting behavior of \eqref{eq25new} as $\epsilon\to 0$, and $\Omega_1$ and $\kappa$ approach $\infty$. 
In order to do this, we will prove that if $\Omega_1 = O(\kappa^{\frac{2\theta}{1-\theta}})$, the requisite lower bound on $\beta_{ds}$ in \eqref{beta_bound} is $O(1)$, which implies that $\beta_{ds}$ can be chosen as an $O(1)$ constant. 
 If $\kappa$ is too small, then it is enlarged to size $\Theta(\Omega^{\frac{1-\theta}{2\theta}})$ so that this does hold.
 
Considering each argument in the $\max$ in \eqref{beta_bound}, the first is
\begin{align*}
\frac{1}{2}
\left(\frac{\kappa^2}{4}\right)^{\frac{\theta}{\theta-1}}\Omega_1
=
O\left( 
\kappa^{\frac{2\theta}{\theta-1}}\Omega_1
\right) 
 =  O(1)
\end{align*}
and the second is
\begin{align*}
\theta^{-2\theta}\kappa^{-4\theta}\Omega_1^{2(1-\theta)}
=
O
\left(
\kappa^{-4\theta}\Omega_1^{2(1-\theta)}
\right)
 = O(1)
\end{align*}
where we have used the assumption that $\Omega_1 = O(\kappa^{\frac{2\theta}{1-\theta}})$.
Since $\beta_{ds}$ is $O(1)$ under this assumption, \eqref{eq25new} implies the number of subgradient evaluations behaves as 
$
O(\kappa^2\epsilon^{1-\frac{1}{\theta}}).
$
Since $\kappa$ may have to be enlargened to $\Theta(\Omega^{\frac{1-\theta}{2\theta}})$, this implies the subgradient evaluations actually behave as
$
O(\max\{\kappa^2,\Omega^{\frac{1-\theta}{\theta}}\}\epsilon^{1-\frac{1}{\theta}}), 
$
which yields \eqref{bigO}.

   \end{proof}

  \subsection{Discussion}\label{Discuss}\label{secDD_discuss}
%
  
The optimal choice for $\beta_{ds}$ can be found by minimizing the iteration complexities given in (\ref{Th1ResultA}) and (\ref{scoreA})  w.r.t. $\beta_{ds}$. However the closed form expression is complicated and not particularly enlightening. Solving it numerically, we find it is typically between $2$ and $2.5$.

  
Regarding RSG \cite{yang2015rsg}, the iteration complexity is very similar to ours, even though the analysis is different. There are several points to note in comparing the two. First is that their error metric is $h(x)-h^*$. 
 On the other hand our error metric is $d(x_k,\calX_h)^2$. Furthermore their iteration complexity is for finding $h(x)-h^*\leq 2\epsilon$. To do a fair comparison, we can convert their error metric to $d(x_k,\calX_h)^2$ by using $\epsilon'=2^{-1}\epsilon^{\frac{1}{2\theta}}$ in their iteration complexity. As we mentioned earlier, their iteration complexity is $O(\epsilon'^{2(\theta-1)}\ln\frac{1}{\epsilon'})$. Thus, if we make the substitution, we see that their iteration complexity is the same as ours except they have an extra $\log\frac{1}{\epsilon}$ term.
 The dependence on $\kappa = G/c$ is the same. 
 
 With respect to their algorithm implementation as given in \cite[Algorithm 2]{yang2015rsg}, the major difference to DS-SG is that \cite{yang2015rsg} requires averaging to be done after every inner loop. As mention before, this may be undesirable on problems where nonergodic methods are preferable. For instance, in problems where $\calC$ enforces sparsity or low-rank, the averaging phase spoils this property \cite{davis2017three}. Another situation in which averaging is undesirable is when learning with reproducing kernels \cite{kivinen2004online}. In such problems, the variable is represented as a linear combination of a kernel evaluated at different points. After $t$ iterations of the subgradient method, the solution is $\sum_{i=1}^{t-1} \alpha_i k(x_i,\cdot)$ where $k:\calH\times\calH\to\mathbb{R}$ is the kernel function. Thus it is necessary to store the $t-1$ points $\{x_i\}$ after $t$ iterations which is infeasible. The key to making the method practical is that for certain objectives the coefficients $\alpha_i$ decay geometrically and the early iterations can be safely ignored. Thus only a small fraction of the last $t$ points are recorded. However, if averaging is used, the earlier coefficients are no longer negligible which compromises the feasibility of the method. Another advantage of our approach over \cite{yang2015rsg} will arise in the next section, where we develop a method for adapting to unknown $c$.



 \section{Double Descending Stairs Stepsize  Method for Unknown $c$}\label{secAdapt}
  \subsection{The Method}
In our method DS-SG (Algorithm \ref{ReSG}), the initial number of inner iterations is
 \begin{eqnarray}
 \label{Kdef}K_1=
   \left\lceil 
   \theta\kappa^2\beta_{ds}^{\frac{1}{2\theta}}\ln\left(2\beta_{ds}\right)
   \Omega_1^{1-\frac{1}{\theta}}
      \right\rceil,
 \end{eqnarray}
 where $\kappa=G/c$. The initial stepsize $\alpha(1)$, given in line \ref{alphaline1}, and the lower bound on $\beta_{ds}$, given in  \eqref{beta_bound}, also depend on $c$.
 If a lower bound for $c$ is known, then using this value in (\ref{alphaline1}), (\ref{beta_bound}), and (\ref{Kdef}) ensures convergence. However in many problems $c$ is unknown.  Furthermore if $c$ is greatly underestimated then this will lead to many more inner iterations and a much smaller initial stepsize than is necessary. For the case where no accurate lower bound for $c$ is known, we propose the following ``doubling trick" which still guarantees essentially the same iteration complexity. The analysis only holds when $\calC$ is bounded. Let the diameter of $\calC$ be $\Omega_{\calC} = \max_{x,x'\in\calC}\|x-x'\|^2$. The basic idea is to repeat DS-SG with a new $c$ which is half the old estimate, which quadruples the number of inner iterations and halves the initial stepsize. In this way it takes only $O(\log_2(\frac{c_1}{c}))$ trial choices for for the error bound constant until it lower bounds the true constant. Furthermore, if the initial estimate $c_1$ is much larger than the true $c$, then the number of inner iterations is relatively small, which is why the overall iteration complexity comes out to be only a factor of $(4/3)$ times larger than that of DS-SG. This means it is advantageous to use a large  overestimate of $c$. In fact one can safely use the initial estimate $c_1 = G\Omega_{\calC}^{\frac{1}{2}-\frac{1}{2\theta}}$. We call the method the ``Doubling trick Descending Stairs" subgradient method (DS2-SG).
 
 \begin{algorithm}
   \caption{Double Descending Stairs subgradient method for unknown $c$ (DS2-SG), $\frac{1}{2}\leq\theta\leq 1$}
   \begin{algorithmic}[1]
   \REQUIRE $\beta_{ds}$, $G$, $M$, $c_1$, $\Omega_{\calC}, x_1$, \emph{stopping criterion}.
   \STATE $l= 1$
   \WHILE{\emph{stopping criterion} not satisfied}
      \STATE $\tilde{x}_l= $DS-SG($\beta_{ds},M,\tilde{x}_{l-1},\Omega_{\calC},G,c_l,\theta$)
      \STATE $c_{l+1}=c_l/2$
      \STATE $l= l+1$
   \ENDWHILE 
   \RETURN $\tilde{x}_{l-1}$
   \end{algorithmic}
   \label{AdReSG}
   \end{algorithm}
   
\begin{theorem}
\label{thmAdapt} Suppose Assumption 3 holds and $1/2\leq\theta\leq 1$. Suppose $\|x-y\|^2\leq \Omega_{\calC}$ for all $x,y\in\calC$ . Let $\kappa_1=G/c_1$.
If $\theta<1$, choose $\beta_{ds}>1$ s.t.
\begin{eqnarray}\label{beta_bound_2_1}
\beta_{ds}&\geq& 
\max\left\{\frac{1}{2}
\left(\frac{\kappa_1^2}{4}\right)^{\frac{\theta}{\theta-1}}\Omega_\calC,
\theta^{-2\theta}\kappa_1^{-4\theta}\Omega_\calC^{2(1-\theta)}
\right\}.
\end{eqnarray}
If $\theta=1$, choose $c_1$ so that $\kappa_1\geq 2$ and choose any $\beta_{ds}>1$.
Fix $\epsilon>0$ and choose 
\begin{eqnarray}\label{Mreq}
M\geq\left\lceil\frac{\ln\frac{\Omega_{\calC}}{\epsilon}}{\ln\beta_{ds}}\right\rceil.
\end{eqnarray}
For the output of Algorithm DS2-SG, if $l\geq L= \max\{0,\lceil\log_2 c_1/c\rceil\}+1$, then $d(\tilde{x}_l,\calX_h)^2\leq\epsilon$. The number of subgradient evaluations is upper bounded by the following quantities (where $\overline{\kappa}=\max\{\kappa,\kappa_1\}$):
\begin{enumerate}
	\item If $\theta=1$:
\begin{eqnarray}
\frac{4}{3}\left(\beta_{ds}^{\frac{1}{2}} \overline{\kappa}^2 \ln(2\beta_{ds})+\left(\frac{\overline{\kappa}}{\kappa_1}\right)^2
+
\log_2\left(\frac{\overline{\kappa}}{\kappa_1}\right)+1
\right)
\left(\frac{\ln\frac{\Omega_{\calC}}{\epsilon}}{\ln\beta_{ds}}+1\right)
\label{adaptIterComp1}
\end{eqnarray}
which simplifies to
\begin{eqnarray*}
O\left(\overline{\kappa}^2\ln\frac{\Omega_{\calC}}{\epsilon}\right)
\end{eqnarray*}
as $\kappa,\kappa_1,\Omega_{\calC}\to\infty$, and $\epsilon\to 0$. 
\item 
If $\frac{1}{2}\leq \theta<1$:
\begin{align}\label{adaptIterComp2}
\frac{4\theta
	\beta_{ds}^{\frac{3}{2\theta}-1}\ln(2\beta_{ds})}
{3(\beta_{ds}^{\frac{1}{\theta}-1}-1)
} 
\overline{\kappa}^2
\epsilon^{1-\frac{1}{\theta}}
+
\left(\frac{4\overline{\kappa}^2}{3\kappa_1^2}
+
\log_2\left(\frac{\overline{\kappa}}{\kappa_1}\right)+2
\right)
\left(
\frac{\ln\frac{\Omega_{\calC}}{\epsilon}}{\ln\beta_{ds}}+1  
\right).
\end{align}
If $\kappa_1$ is chosen large enough so that $\Omega_{\calC} = O(\kappa_1^{\frac{2\theta}{1-\theta}})$,
this simplifies to 
\begin{align}\label{bigO3}
O\left(
\max\{\overline{\kappa}^2,\Omega_{\calC}^{\frac{1}{\theta}-1}\}\epsilon^{1-\frac{1}{\theta}}
\right)
\end{align}
as $\kappa,\kappa_1,\Omega_{\calC}\to\infty$, and $\epsilon\to 0$. 

\end{enumerate}
Note that if $c_1=G\Omega_{\calC}^{\frac{1}{2}-\frac{1}{2\theta}}$, then $\overline{\kappa}=\kappa$. 

\end{theorem}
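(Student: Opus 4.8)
The plan is to treat DS2-SG as a sequence of DS-SG calls, each of which is a legitimate instance of Theorem~\ref{thmRestart}, and then to sum their costs. The conceptual engine is the monotonicity of the error bound in its constant: if $h$ satisfies HEB$(c,\theta)$ and $0<c'\le c$, then $h(x)-h^*\ge c\,d(x,\calX_h)^{1/\theta}\ge c'\,d(x,\calX_h)^{1/\theta}$, so $h$ also satisfies HEB$(c',\theta)$. Consequently, running DS-SG with any underestimate $c_l\le c$ is covered by Theorem~\ref{thmRestart} verbatim, with $c_l$, $\kappa_l=G/c_l$, and $\Omega_\calC$ in place of $c$, $\kappa$, and $\Omega_1$. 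First I would verify that the parameters of the $l$-th call meet the hypotheses of Theorem~\ref{thmRestart}: (i) $\Omega_\calC$ bounds $d(\tilde x_{l-1},\calX_h)^2$ because $\calX_h\subseteq\calC$ and $\Omega_\calC$ is the squared diameter; (ii) the choice of $M$ in \eqref{Mreq} meets the outer-loop requirement; and (iii), crucially, the single bound \eqref{beta_bound_2_1} written in terms of $\kappa_1$ implies \eqref{beta_bound} for every later $\kappa_l$. For (iii) note that $c_l=c_1 2^{-(l-1)}$ is decreasing, so $\kappa_l=2^{l-1}\kappa_1$ is increasing, while for $\tfrac12\le\theta<1$ the right-hand side of \eqref{beta_bound} is decreasing in $\kappa$ (both $(\kappa^2/4)^{\theta/(\theta-1)}$ and $\kappa^{-4\theta}$ shrink as $\kappa$ grows); hence the most stringent instance is the smallest $\kappa_l=\kappa_1$, which is exactly \eqref{beta_bound_2_1}. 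For $\theta=1$ the requirement is only $\kappa_l\ge 2$, which follows from $\kappa_1\ge 2$.

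Second I would pin down when the estimate first becomes valid. We have $c_l\le c$ iff $2^{l-1}\ge c_1/c$, i.e.\ iff $l\ge\lceil\log_2(c_1/c)\rceil+1$; taking the positive part gives exactly $L=\max\{0,\lceil\log_2(c_1/c)\rceil\}+1$. For any $l\ge L$ the $l$-th call is a valid DS-SG instance, so by Theorem~\ref{thmRestart} (with $\Omega_\calC$ and the chosen $M$) it returns $\tilde x_l$ with $d(\tilde x_l,\calX_h)^2\le\epsilon$, proving the correctness claim. It is also useful to record that the first valid estimate satisfies $c_L\in(c/2,c]$, so $\kappa\le\kappa_L<2\kappa$; together with $\overline\kappa=\max\{\kappa,\kappa_1\}$ this shows $\kappa_L$ is comparable to $\overline\kappa$. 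Moreover $\overline\kappa=\kappa$ exactly when $c_1\ge c$, and the closing remark follows because combining HEB with the Lipschitz bound $h(x)-h^*\le G\,d(x,\calX_h)$ gives $c\le G\,d(x,\calX_h)^{1-1/\theta}\le G\,\Omega_\calC^{1/2-1/(2\theta)}$, so the choice $c_1=G\,\Omega_\calC^{1/2-1/(2\theta)}$ is automatically an overestimate.

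Third, for the iteration complexity I would observe that the number of subgradient evaluations in the $l$-th call is a deterministic function of its parameters $(\kappa_l,\Omega_\calC,\beta_{ds},\theta,M)$ alone -- it does not depend on whether that call actually converges -- so it is bounded by the per-call estimates established inside Theorem~\ref{thmRestart}, namely $MK_1\le(\beta_{ds}^{1/2}\kappa_l^2\ln(2\beta_{ds})+1)M$ for $\theta=1$ and the right-hand side of \eqref{scoreA} with $\kappa_l$ for $\tfrac12\le\theta<1$. Running the outer loop to $l=L$ and summing, the key fact is that $\kappa_l^2=4^{l-1}\kappa_1^2$ is geometric, so $\sum_{l=1}^{L}\kappa_l^2=\kappa_1^2\tfrac{4^L-1}{3}<\tfrac43\,4^{L-1}\kappa_1^2=\tfrac43\kappa_L^2$; this is the origin of the $\tfrac43$ prefactor and it collapses the whole run to within a constant of the final, most expensive call, whose condition number $\kappa_L$ is comparable to $\overline\kappa$. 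The $O(1)$ per-stage ceiling overheads and the additive $+M$ per call accumulate to a term proportional to $L\,\bigl(\tfrac{\ln(\Omega_\calC/\epsilon)}{\ln\beta_{ds}}+1\bigr)$, and since $L=\log_2(\overline\kappa/\kappa_1)+O(1)$ and $4^{L}\sim(\overline\kappa/\kappa_1)^2$ these produce the $\log_2(\overline\kappa/\kappa_1)$ and $(\overline\kappa/\kappa_1)^2$ contributions in \eqref{adaptIterComp1}--\eqref{adaptIterComp2}. Collecting terms yields the stated bounds, and the asymptotic simplifications to $O(\overline\kappa^2\ln(\Omega_\calC/\epsilon))$ and \eqref{bigO3} follow exactly as in Theorem~\ref{thmRestart}, using that $\beta_{ds}$ can be taken $O(1)$ once $\Omega_\calC=O(\kappa_1^{2\theta/(1-\theta)})$.

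The hard part will be the constant-level bookkeeping of the geometric sum: tracking the ceilings $K_m=\lceil\tilde K_m\rceil$, the $+1$'s and the $+M$'s across $L$ nested geometric progressions, and cleanly expressing $\kappa_L$ (which lies in $[\kappa,2\kappa)$) in terms of $\overline\kappa$ so that the prefactor comes out as the advertised $\tfrac43$ rather than a looser constant. By contrast, the structural core -- HEB-monotonicity in $c$, the reduction of each call to Theorem~\ref{thmRestart}, and counting the $L=\max\{0,\lceil\log_2(c_1/c)\rceil\}+1$ halvings until the estimate becomes a valid lower bound -- is short and is where the real content lies.
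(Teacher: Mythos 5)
Your proposal follows essentially the same route as the paper's proof: reduce each outer loop to a valid instance of Theorem \ref{thmRestart} (via HEB monotonicity in the constant and the fact that the right-hand side of \eqref{beta_bound} decreases as $\kappa$ grows, so the $\kappa_1$ condition is the binding one), identify $L$ as the first index with $c_l\le c$, and bound the total cost by the geometric sum $1+4+\cdots+4^{L-1}$, which is the source of the $\tfrac{4}{3}$ prefactor. The constant-level bookkeeping you flag as the hard part is indeed the loosest point, but the paper's own proof treats it no more carefully (it writes $\tfrac{4^L-1}{3}=\tfrac{4}{3}\max\{(c_1/c)^2,1\}$, which is exact only when $\log_2(c_1/c)$ is an integer), and your closing remark on $c_1=G\Omega_{\calC}^{\frac{1}{2}-\frac{1}{2\theta}}$ reproduces the paper's own argument verbatim, so your sketch matches the published proof in both structure and level of rigor.
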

\begin{proof}
For all $l$ it is clear that since the iterates remain in the constraint set $\calC$,  $d(\tilde{x}_l,\calX_h)^2\leq \Omega_{\calC}$. Now by the choice of $L$, $c_l\leq c$ for all $l\geq L$. Therefore we can apply Theorem \ref{thmRestart} to the iterations within the while loop when $l\geq L$, which implies $d(\tilde{x}_l,\calX_h)^2\leq\epsilon$ for $l\geq L$. Note that, since the R.H.S. of (\ref{beta_bound_2_1}) decreases if you replace $c_1$ with a smaller error bound constant, $\beta_{ds}$ will satisfy (\ref{beta_bound}) with $c_l$ in place of $c_1$ for all $l\geq 2$. 

   We now determine the overall iteration complexity. Let $K_j^l$ for $l=1,2,\ldots, L$ and $j=1,2,\ldots M$ be the number of iterations passed to FixedSG within the $j$th call to FixedSG in DS-SG, during the $l$th loop in DS2-SG. For all such $l$ and $j$, $K_j^l = \lceil\tilde{K}_j^l\rceil$ where $\tilde{K}_j^1=\tilde{K}_j$ defined in \eqref{defKtild}, and $\tilde{K}_j^l = 4^{l-1}\tilde{K}_j^1$. 
  Thus using the fact that $\ceil x< x+1$, the total number of subgradient calls of DS2-SG can be upper bounded as
  \begin{eqnarray}
    \sum_{l=1}^L\sum_{j=1}^M K_j^l
  	&<&
    \sum_{l=1}^L\sum_{j=1}^M\tilde{K}_j^l +  LM
  	\nonumber\\
  	&=&
  	\left(1+4+16+\ldots +4^{L-1}\right)\sum_{j=1}^M\tilde{K}_j^1+  LM
  	\nonumber\\
  	&=&
  	\frac{(4^L-1)}{3}\sum_{j=1}^M\tilde{K}_j^1+LM
  	\nonumber\\
  	&=&
  	\frac{4}{3}
  	\max\left\{\left(\frac{c_1}{c}\right)^2,1\right\}
  	\sum_{j=1}^M\tilde{K}_j^1
  	\nonumber\\
  	&&+(\max\{0,\lceil\log_2 c_1/c\rceil\}+1)
  	\left\lceil\frac{\ln\frac{\Omega_{\calC}}{\epsilon}}{\ln\beta_{ds}}\right\rceil
  	.\label{eqFInal}
  \end{eqnarray}
By noting that 
$$
  	\max\left\{\left(\frac{c_1}{c}\right)^2,1\right\}
  	=
\frac{\overline{\kappa}^2}{\kappa_1^2}
 $$
 and with the aid of \eqref{Th1ResultA} and \eqref{scoreA}, \eqref{eqFInal} reduces to the iteration complexities given in (\ref{adaptIterComp1}) and \eqref{adaptIterComp2}.

 Now $$
 c d(x,\calX_h)^{\frac{1}{\theta}}
 \leq h(x)-h^*\leq \langle g,x-x^*\rangle\leq \|g\|\|x-x^*\|
 $$
 for all $x\in\calC$, $g\in\partial h(x)$, and $x^*\in\calX_h$. If $x^*=\proj_{\calX_h}(x)$ then this implies
 $$
 c d(x,\calH_h)^{\frac{1}{\theta}}\leq Gd(x,\calH_h)\implies c\leq Gd(x,\calH_h)^{1-\frac{1}{\theta}}\quad\forall x\in\calC.
 $$
 Minimizing the R.H.S. yields $c\leq G\Omega_{\calC}^{\frac{1}{2}-\frac{1}{2\theta}}$.
 Therefore the choice $c_1=G\Omega_{\calC}^{\frac{1}{2}-\frac{1}{2\theta}}$ guarantees $\kappa_1\leq\kappa$. 
For $\theta=1$, choosing $c_1=G$ implies $\kappa_1=1$, which violates the requirement: $\kappa_1\geq 2$. Thus one should instead choose $c_1=G/2$. 
\end{proof}
\subsection{Discussion}

    The authors of RSG \cite{yang2015rsg} proposed a variant, R\textsuperscript{2}SG, which can adapt to unknown $c$ when $\theta<1$. It also uses geometrically increasing number of inner iterations, however the initial stepsize remains the same. An advantage of that method is it does not require the constraint set to be bounded. However since their analysis is only valid for $\theta<1$, it cannot be directly applied to important problems such as polyhedral convex optimization, and requires using a surrogate $\theta<1$.
    
    For $\theta=1$, the subgradient methods of \cite[Sec. 2.3]{shor2012minimization} and \cite{goffin1977convergence} choose geometrically decaying stepsizes which depend on the error bound constant $c$. It is plausible that our ``doubling trick" idea can be employed to accelerate these methods when $c$ is unknown, by starting with an estimate for $c$ and repeatedly halving it. This should lead to linear convergence with only a slightly larger iteration complexity than the original methods. Thus our doubling trick can be thought of as a ``meta-acceleration" technique with potentially large scope. 
   
 A drawback of DS2-SG is it does not have an explicit stopping rule. In particular, the number of ``wrapper" iterations, $L$, depends on the true error bound constant $c$, which is unknown. This is also the main drawback of R\textsuperscript{2}SG \cite{yang2015rsg} (along with the fact it cannot be applied when $\theta=1$). As was suggested in \cite{yang2015rsg}, we suggest using an independent stopping criterion. For example on a machine learning problem, one could use the error on a  small validation set as an indication the algorithm has converged. If a lower bound $h_{LB}\leq h^*$ is known, then $c^{-\theta}\left(h(x_k)-h_{LB}\right)^\theta<\sqrt{\epsilon}$ can be used as a stopping criterion. This is because  $d(x_k,\calX_h)\leq c^{-\theta}\left(h(x_k)-h_{LB}\right)^\theta$.  Furthermore since, $c d(x_k,\calX_h)^{\frac{1}{\theta}-1}\leq\|g\|$ for $g\in\partial h(x)$, the norm of the subgradient could be used as a stopping criterion for $\theta<1$. Another possibility is to use the fact that $c d(x_k,\calX_h)\leq \|g\|^\theta \Omega_{\calC}^\theta$. Exploring these stopping criteria is a topic for future work. 
   
   In practice for DS2-SG, we often observe an increase in the objective function value whenever a new trial error bound constant is used resulting in a larger stepsize. It is therefore a good strategy to keep track of the iterate $\tilde{x}_l$ with the smallest objective function value so far. This does not change the overall iteration complexity and only requires storing one additional iterate.  
   

\section{Faster Rates for Decaying Stepsizes for $\theta<1$}\label{sec_sum}
If $\theta<1$, an upper bound for $G$ is known, a lower bound for $c$ is known, and the constraint set is compact, then it is possible to obtain the same iteration complexity as DS-SG using decaying stepsizes. We consider $\theta\geq 1/2$ and $\theta<1/2$ in separate theorems. 

\begin{theorem}\label{OptimalP}\label{ThmOptDecay}
Suppose Assumption 3 holds and $\frac{1}{2}\leq \theta< 1$. Suppose $\|x-y\|^2\leq\Omega_{\calC}$ for all $x,y\in\calC$. 
Choose $c$ small enough (or $G$ large enough) so that 
\begin{eqnarray}
\kappa\label{kappaCond}
\geq
\sqrt{3}\Omega_{\calC}^{\frac{1-\theta}{2\theta}}.
\end{eqnarray}
For the iterates of the subgradient method (\ref{iterSG}), let $\alpha_k = \alpha_1 k^{-p}$ where
\begin{eqnarray}\label{defp}
p = \frac{1}{2(1-\theta)}
\end{eqnarray}
and 
\begin{eqnarray}\label{alph1Choice}
\alpha_1 = \frac{c}{G^2}\left(\frac{\theta\kappa^2}{1-\theta}\right)^p.
\end{eqnarray}
Then, for all $k\geq \lceil\frac{2\theta}{1-\theta}\rceil$
\begin{eqnarray}\label{OptConvRate}
d(x_k,\calX_h)^2 \leq  \left(
\frac{\theta }{1-\theta}
\right)^{\frac{\theta}{1-\theta}} \left(\frac{k}{\kappa^2}\right)^{\frac{\theta}{\theta-1}}.
\end{eqnarray}
\end{theorem}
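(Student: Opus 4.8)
The plan is to prove the bound $e_k := d(x_k,\calX_h)^2\leq\phi(k)$ by induction on $k$, starting from $k_0:=\lceil 2\theta/(1-\theta)\rceil$. Throughout I abbreviate $r:=\frac{\theta}{1-\theta}$ and $\gamma:=\frac{1}{2\theta}\in[\tfrac12,1]$, and I note that the target right-hand side of (\ref{OptConvRate}) is exactly $\phi(k):=(r\kappa^2)^r k^{-r}$ (so that $A:=(r\kappa^2)^r$ is the leading constant and $k_0=\lceil 2r\rceil$). The entire argument is driven by the recursion (\ref{ABiggy}) of Proposition \ref{Prop_keyRecur}, written as $e_{k+1}\leq F_k(e_k)$ with $F_k(e):=e-2\alpha_k c\,e^\gamma+\alpha_k^2 G^2$. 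Before starting the induction I would record the three exponent identities that make the prescribed $p$ and $\alpha_1$ work: $p=r\gamma$, $2p=r+1$, and, computing directly from $A=(r\kappa^2)^r$ and $\alpha_1=\frac{c}{G^2}(r\kappa^2)^p$, the key algebraic equality $2\alpha_1 c\,A^\gamma-\alpha_1^2 G^2=rA$. These are one-time calculations.

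For the base case I would use that every iterate stays in $\calC$, so $e_{k_0}\leq\Omega_{\calC}$, and then verify $\Omega_{\calC}\leq\phi(k_0)$. Unwinding $\phi(k_0)=(r\kappa^2/k_0)^r$, this reduces to $\kappa^2\geq\frac{k_0}{r}\Omega_{\calC}^{1/r}$; since $\theta\geq\tfrac12$ gives $r\geq 1$ and hence $k_0=\lceil 2r\rceil\leq 3r$, it suffices that $\kappa^2\geq 3\,\Omega_{\calC}^{(1-\theta)/\theta}$, which is precisely the standing hypothesis (\ref{kappaCond}).

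The inductive step is the crux. Assuming $e_k\leq\phi(k)$ I want $e_{k+1}\leq F_k(e_k)\leq\phi(k+1)$, but $F_k$ is not monotone, so substituting the bound on $e_k$ is not valid. The observation that resolves this is that $-e^\gamma$ is convex for $\gamma\in(0,1]$, hence $F_k$ is convex and attains its maximum over $[0,\phi(k)]$ at an endpoint; it therefore suffices to bound $F_k(0)$ and $F_k(\phi(k))$. At the right endpoint, the identities $p=r\gamma$ and $2p=r+1$ collapse the last two terms onto the common scale $k^{-(r+1)}$, giving $F_k(\phi(k))=Ak^{-r}-(2\alpha_1 c\,A^\gamma-\alpha_1^2 G^2)k^{-(r+1)}=Ak^{-r}-rA\,k^{-(r+1)}$ by the key equality; comparing with the Bernoulli estimate $\phi(k+1)=Ak^{-r}(1+1/k)^{-r}\geq Ak^{-r}-rA\,k^{-(r+1)}$ closes this endpoint (with equality). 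At the left endpoint $F_k(0)=\alpha_k^2 G^2=rA\,k^{-(r+1)}$, and I would confirm $rA\,k^{-(r+1)}\leq\phi(k+1)$ by checking $\frac{r}{k}(1+1/k)^r\leq 1$, which holds for $k\geq k_0\geq 2r$ because then $r/k\leq\tfrac12$ and $(1+1/k)^r\leq e^{1/2}$.

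The main obstacle I anticipate is exactly this non-monotonicity: a naive substitution is invalid since the contraction term $-2\alpha_k c\,e^\gamma$ can be negligible when $e_k$ is small while the noise term $\alpha_k^2 G^2$ persists. Convexity of $F_k$ (a consequence of $\gamma\leq 1$, i.e. of $\theta\geq\tfrac12$) is what reduces the step to the two endpoints, after which the work is pure bookkeeping: verifying that the specific constants $p=\frac{1}{2(1-\theta)}$ and $\alpha_1$ in the statement are precisely those making $2\alpha_1 c\,A^\gamma-\alpha_1^2 G^2=rA$ and aligning all powers of $k$, so that both endpoint inequalities close while the single hypothesis (\ref{kappaCond}) handles the base case.
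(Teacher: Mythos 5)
Your proposal is correct and follows essentially the same route as the paper's own proof: induction on $k$ with the same constant $A=(r\kappa^2)^r$ (the paper's $C_e=(\kappa^2 b)^b$ with $b=r$), the same base case via (\ref{kappaCond}) and $k_0\leq 3r$, and the same key reduction — writing $e_k=a\,\phi(k)$, $a\in[0,1]$, and exploiting convexity of the recursion's right-hand side in $a$ (equivalently, convexity of your $F_k$) to check only the endpoints $a=0$ and $a=1$, closing the $a=1$ case with the Bernoulli estimate $(1+1/k)^{-r}\geq 1-r/k$. The only cosmetic differences are that you verify the stated $\alpha_1$ satisfies the identity $2\alpha_1 c A^\gamma-\alpha_1^2G^2=rA$ directly, where the paper derives $\alpha_1$ as the double root of the corresponding quadratic, and your $a=0$ endpoint uses the bound $\tfrac{r}{k}(1+1/k)^r\leq\tfrac12 e^{1/2}<1$ in place of the paper's quadratic-in-$\sqrt{k}$ argument; both close under the same condition $k\geq 2r$.
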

\begin{proof} 
The recursion describing the subgradient method is, for $k\geq 1$,
\begin{eqnarray}\label{recurting}
e_{k+1}\leq e_k - 2\alpha_k c e_k^{\gamma}+\alpha_k^2 G^2,
\end{eqnarray}
where $e_k=d(x_k,\calX)^2$ and $\gamma=\frac{1}{2\theta}$.
Let $\alpha_k = \alpha_1 k^{-p}$. We wish to prove that if 
$$
p=\frac{\gamma}{2\gamma-1}
$$
and the constant $\alpha_1$ is chosen as in (\ref{alph1Choice}), then
\begin{eqnarray}\label{inductiveHypothesis}
e_k \leq C_e k^{-b}
\end{eqnarray}
 where 
$$b\triangleq\frac{p}{\gamma}=\frac{1}{2\gamma-1},$$ 
for all $k\geq k_0 \triangleq \lceil2 b\rceil$, and $C_e$ is given by $C_e=(\kappa^2 b)^b$.

We will prove this result by induction. The initial condition is
\begin{eqnarray*}
e_{k_0}\leq C_e k_0^{-b}
\end{eqnarray*}
which is implied by
\begin{eqnarray}\label{C_initCond}
\Omega_{\calC}
\leq
C_e k_0^{-b}
\iff
C_e = (\kappa^2 b)^b \geq \Omega_{\calC} k_0^b.
\end{eqnarray}
Since $k_0=\lceil 2b \rceil\leq 2b+1\leq 3b$, this is implied by
\begin{eqnarray*}
\left(
b \kappa^2\right)^{b}
\geq 
\Omega_{\calC}(3b)^b.
\end{eqnarray*}
Dividing by $b^b$ and taking the $b$th root yields 
\begin{eqnarray*}
\kappa^2 \geq 3\Omega_{\calC}^{\frac{1}{b}},
\end{eqnarray*}
which is (\ref{kappaCond}).

Next, assume (\ref{inductiveHypothesis}) is true for some $k\geq k_0$. That is, assume $e_k = a C_e k^{-b}$, where $0\leq a \leq 1$. We will show that this implies $e_{k+1}\leq C_e (k+1)^{-b}$. Substituting $e_k = a C_e k^{-b}$ and $\alpha_k=\alpha_1 k^{-p}$ into the right hand side of (\ref{recurting}) yields
\begin{eqnarray}
e_{k+1}
&\leq&
a C_e k^{-b}-2 \alpha_1 c a^\gamma C_e^\gamma k^{-(p+\gamma b)}+\alpha_1^2 G^2 k^{-2p} 
\nonumber\\\nonumber
&=&
a C_e k^{-b}
+
\left(
\alpha_1^2 G^2
-
2 \alpha_1 c a^\gamma C_e^\gamma
\right)
k^{-2p}
\end{eqnarray}
using the fact that $p+\gamma b=2 p$. Thus we wish to enforce the inequality:
\begin{eqnarray}
\label{TheIn}
a C_e k^{-b}
+
\left(
\alpha_1^2 G^2
-
2 \alpha_1 c a^\gamma C_e^\gamma
\right)
k^{-2p}
\leq
C_e (k+1)^{-b}.
\end{eqnarray}
We need (\ref{TheIn}) to hold for all $a\in[0,1]$. Since $\frac{1}{2}\leq\theta<1$, $\frac{1}{2}<\gamma\leq 1$, therefore the L.H.S. is a convex function of $a$ for $a\geq 0$. Therefore if the inequality holds for $a=0$ and $a=1$, then it holds for all $a\in[0,1]$. 

Consider first, $a=0$. The condition is
\begin{eqnarray*}
\label{alphaCondFirst}
\alpha_1^2 G^2
k^{-2\gamma b}
\leq
C_e (k+1)^{-b}.
\end{eqnarray*}
This is equivalent to
\begin{eqnarray}\label{alphaCond}
\alpha_1 
\leq
G^{-1}C_e^{\frac{1}{2}} k^{\gamma b} (k+1)^{-\frac{b}{2}}.
\end{eqnarray}
Note that $\alpha_1$, given in (\ref{alph1Choice}), can be rewritten as
\begin{eqnarray*}
\alpha_1 = \frac{c C_e^\gamma}{G^2}.
\end{eqnarray*}
 Substituting $\alpha_1$ into (\ref{alphaCond}) yields
\begin{eqnarray*}
\frac{c}{G^2}C_e^\gamma \leq 
G^{-1}C_e^{\frac{1}{2}}k^{\gamma b} (k+1)^{-\frac{b}{2}}
\end{eqnarray*}
which can be rearranged to
\begin{eqnarray}
G\geq
c
C_e^{\gamma-\frac{1}{2}}k^{-\gamma b} (k+1)^{\frac{b}{2}}.\label{love}
\end{eqnarray}
Now 
\begin{eqnarray*}
C_e^{\frac{2\gamma-1}{2}} 
=
\kappa\sqrt{b}.
\end{eqnarray*}
Substituting this into (\ref{love}) yields
\begin{eqnarray}
k^{\gamma b} (k+1)^{-\frac{b}{2}}\geq 
\sqrt{b}.\label{sxx}
\end{eqnarray}
Now
\begin{eqnarray*}
(k+1)^{-\frac{b}{2}}
&=&
k^{-\frac{b}{2}}(1+k^{-1})^{-\frac{b}{2}}
\\
&\geq&
k^{-\frac{b}{2}}
\left(1
-\frac{b}{2}k^{-1}
\right)
\\
&=&
k^{-\frac{b}{2}}
-\frac{b}{2}k^{-\frac{b}{2}-1}.
\end{eqnarray*}
Therefore (\ref{sxx}) is implied by
\begin{eqnarray*}
k^{b(\gamma-\frac{1}{2})}
-
\frac{b}{2}
k^{b(\gamma-\frac{1}{2})-1}
\geq
\sqrt{b}.
\end{eqnarray*}
Now substituting $b=(2\gamma-1)^{-1}$ into the two exponents yields
\begin{eqnarray*}
k^{\frac{1}{2}}-\frac{b}{2}k^{-\frac{1}{2}}\geq\sqrt{b}
\end{eqnarray*}
which is equivalent to
\begin{eqnarray*}
t^2 - \sqrt{b} t - \frac{b}{2}\geq 0 
\end{eqnarray*}
with the substitution $t=\sqrt{k}$. Thus we require
\begin{eqnarray*}
t
\geq
\frac{1+\sqrt{3}}{2}\sqrt{b}
\end{eqnarray*}
which is implied by $k\geq 2 b$. Thus $k\geq \lceil2 b\rceil$ implies (\ref{TheIn}) holds with $a=0$.

Now consider $a=1$ in (\ref{TheIn}). We again simplify (\ref{TheIn}) using
\begin{eqnarray*}
C_e(k+1)^{-b}
=
C_e k^{-b}(1+k^{-1})^{-b}
\geq
C_e k^{-b} - b C_e k^{-(b+1)}.
\end{eqnarray*}
Therefore in the case $a=1$, (\ref{TheIn}) is implied by 
\begin{eqnarray}
\left(
\alpha_1^2 G^2
-
2 \alpha_1 c C_e^\gamma
\right)
k^{-2p}
\leq
 - b C_e k^{-(b+1)}\label{TheIn2}.
\end{eqnarray}
Now $2p=b+1$, therefore (\ref{TheIn2}) is equivalent to
\begin{eqnarray*}
\alpha_1^2 G^2
-
2 \alpha_1 c C_e^\gamma+b C_e
\leq
0
\end{eqnarray*}
for all $k\geq 1$. The L.H.S. is a positive-definite quadratic in $\alpha_1$. Solving it yields the two solutions
\begin{eqnarray*}
\frac{
2c C_e^\gamma \pm \sqrt{4c^2C_e^{2\gamma}-4 G^2 b C_e}
}{2 G^2}.
\end{eqnarray*}
The quadratic has a real solution if
\begin{eqnarray}
4c^2C_e^{2\gamma}-4 G^2 b C_e
\geq 
0
\iff
C_e
\geq
\left(
\kappa^2 b\right)^{b}.
\label{CeLowerBound}
\end{eqnarray}
Thus since $C_e=(\kappa^2 b)^b$, the only valid choice for $\alpha_1$ is
\begin{eqnarray*}\label{AlphaVal}
\alpha_1 = \frac{c C_e^\gamma}{G^2}
\end{eqnarray*}
which corresponds to (\ref{alph1Choice}).
This completes the proof.
\end{proof} 

The convergence rate given in (\ref{OptConvRate}) yields the following iteration complexity: The subgradient method with this stepsize yields a point such that $d(x_k,\calX_h)^2\leq\epsilon$ for all
\begin{eqnarray*}
k\geq \frac{2\theta}{1-\theta} \max\{\kappa^2,3\Omega_{\calC}^{\frac{1}{\theta}-1}\} \epsilon^{1-\frac{1}{\theta}}.
\end{eqnarray*}
This is equal (up to constants) to the iteration complexity derived for DS-SG in Theorem \ref{thmRestart}. The main drawback versus DS-SG is that the analysis only holds for a bounded constraint set. It is also trivial to embed this stepsize into the ``doubling" framework used in DS2-SG so that one does not need a lower bound for $c$. Since the analysis is the same as given in Theorem \ref{thmAdapt}, we omit the details. The proof of Theorem \ref{ThmOptDecay} is inspired by \cite{goffin1977convergence} which considered geometrically decaying stepsizes when $\theta=1$. Theorem \ref{ThmOptDecay} is a natural extension of \cite{goffin1977convergence} to $\theta<1$. 

The optimal stepsize given in Theorem \ref{ThmOptDecay} requires knowledge of $G$, $c$, and $\Omega_{\calC}$ in order to set $\alpha_1$. In the longer version of this paper \cite{johnstone2017faster} we show that the stepsizes $\alpha_k=\alpha_1 k^{-p}$ with $p<1$ are convergent for any $\alpha_1>0$ when $\theta\geq 1/2$. 
 
We can obtain the same rate for the choice of $\alpha_1$ and $p$ in Theorem \ref{ThmOptDecay} when $\theta<1/2$. In this case, the convergence rate holds for all $k\geq 2$ under a slightly different condition on $\kappa$. 

\begin{theorem}\label{OptimalP2}
Suppose Assumption 3 holds and $0< \theta< \frac{1}{2}$. 
Suppose $\|x-y\|^2\leq\Omega_{\calC}$ for all $x,y\in\calC$. 
Choose $c$ small enough (or $G$ large enough) so that 
\begin{eqnarray}
\kappa^2\label{secondKappaCond}
\geq
\frac{2(1-\theta)}{\theta}\Omega_\calC^{\frac{1-\theta}{\theta}}.
\end{eqnarray}
For the iterates of the subgradient method (\ref{iterSG}), let $\alpha_k = \alpha_1 k^{-p}$ 
where $p$
and $\alpha_1$ are defined in \eqref{defp}  and (\ref{alph1Choice}).
Then, for all $k\geq 2$, $d(x_k,\calX)^2$ satisfies (\ref{OptConvRate}).
\end{theorem}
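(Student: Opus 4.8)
The plan is to reuse, almost verbatim, the inductive scheme from the proof of Theorem~\ref{ThmOptDecay}. With $e_k=d(x_k,\calX_h)^2$, $\gamma=\tfrac{1}{2\theta}$, $b=\tfrac{p}{\gamma}=\tfrac{\theta}{1-\theta}$ and $C_e=(\kappa^2 b)^b$, the target bound \eqref{OptConvRate} is again precisely $e_k\le C_e k^{-b}$, so I would prove this last inequality by induction on $k$ using the recursion \eqref{recurting}. The algebraic identities $\gamma b=p$, $2p=b+1$, and $C_e^{2\gamma-1}=\kappa^2 b$ hold exactly as before and I would invoke them freely. For the base case I would start at $k_0=2$: since the iterates stay in $\calC$ we have $e_2\le\Omega_\calC$, so it suffices that $\Omega_\calC\le C_e 2^{-b}$; raising to the power $1/b=\tfrac{1-\theta}{\theta}$ turns this into $\kappa^2\ge\tfrac{2}{b}\Omega_\calC^{1/b}=\tfrac{2(1-\theta)}{\theta}\Omega_\calC^{\frac{1-\theta}{\theta}}$, which is exactly hypothesis \eqref{secondKappaCond}.

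The inductive step is where the regime $\theta<\tfrac12$ genuinely departs from Theorem~\ref{ThmOptDecay}. Writing $e_k=aC_e k^{-b}$ with $a\in[0,1]$, the inequality to enforce is again \eqref{TheIn}, whose left-hand side is $f(a)=Aa-Ba^{\gamma}+D$ with $A=C_e k^{-b}$, $B=2\alpha_1 c C_e^{\gamma}k^{-2p}$ and $D=\alpha_1^2 G^2 k^{-2p}$, all positive. The main obstacle is that now $\gamma>1$, so $-a^{\gamma}$ is \emph{concave}; hence the ``check the two endpoints'' argument used for $\theta\ge\tfrac12$ no longer certifies the inequality for every $a\in[0,1]$, and I must instead bound $\max_{a\in[0,1]}f(a)$. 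My plan is to show the maximum sits at $a=1$. Since $f'(a)=A-B\gamma a^{\gamma-1}$ is decreasing in $a$ (because $\gamma>1$), it is enough to check $f'(1)=A-B\gamma\ge0$. Substituting $\alpha_1=cC_e^{\gamma}/G^2$ together with $2p-b=1$ and $C_e^{2\gamma-1}=\kappa^2 b$ collapses $f'(1)$ to $C_e k^{-b}\bigl(1-\tfrac{2p}{k}\bigr)$, which is nonnegative for all $k\ge2$ precisely because $p=\tfrac{1}{2(1-\theta)}<1$ when $\theta<\tfrac12$. Thus $f$ is nondecreasing on $[0,1]$ and it suffices to verify \eqref{TheIn} at $a=1$.

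The $a=1$ verification is then identical to the corresponding computation in Theorem~\ref{ThmOptDecay}: using $(k+1)^{-b}\ge k^{-b}-bk^{-(b+1)}$ and $2p=b+1$, inequality \eqref{TheIn} reduces to the quadratic $\alpha_1^2 G^2-2\alpha_1 c C_e^{\gamma}+bC_e\le0$. By \eqref{CeLowerBound} this upward quadratic in $\alpha_1$ has vanishing discriminant exactly when $C_e=(\kappa^2 b)^b$, its unique root being $\alpha_1=cC_e^{\gamma}/G^2$, i.e.\ the choice \eqref{alph1Choice}; at that root the quadratic equals zero, so \eqref{TheIn} holds with room to spare. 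This closes the induction and establishes \eqref{OptConvRate} for all $k\ge2$. The only delicate point throughout is the concavity-induced maximization over $a$, which is why the condition on $\kappa$ and the admissible range of $k$ come out slightly different from the $\theta\ge\tfrac12$ case.
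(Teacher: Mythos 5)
Your proposal is correct and takes essentially the same approach as the paper: the same induction with base case $k_0=2$ reduced to \eqref{secondKappaCond}, the same handling of the concavity of the left side of \eqref{TheIn} in $a$ by showing the maximum over $[0,1]$ occurs at $a=1$, and the same reduction of the $a=1$ case to the quadratic argument from Theorem \ref{ThmOptDecay}. Your check $f'(1)\ge 0$ for $k\ge 2p$ is just a repackaging of the paper's computation of the unconstrained maximizer $a_*$ and the verification $a_*\ge 1$ for $k\ge \tfrac{2\gamma}{2\gamma-1}=2p$, so the two arguments coincide.
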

\begin{proof}
Recall $\gamma=1/(2\theta)$ and note that $\gamma> 1$ since $\theta< 1/2$. Recall 
$$
b=\frac{1}{2\gamma-1}\leq 1\text{ and } p=\gamma b.
$$
As with the proof of Theorem \ref{OptimalP}, this will be a proof by induction. We wish to prove that $e_k\leq C_e k^{-b}$ for all $k\geq 2$ for the constant defined as 
$
C_e = \left(\kappa^2 b\right)^b.
$
 The initial condition is 
$
e_{2}\leq C_e 2^{-b}
$
which is implied by
$
C_e\geq\Omega_\calC 2^b\label{C_initCond2}.
$
This in turn is implied by (\ref{secondKappaCond}).

Now we assume $e_k=aC_e k^{-b}$ for some $k\geq 2$ and $a\in[0,1]$ and will show that $e_{k+1}\leq C_e(k+1)^{-b}$. Using  the inductive assumption in the main recursion (\ref{recurting}) yields the following inequality, which we would like to enforce for all $a\in[0,1]$:
\begin{eqnarray}\label{astar}
e_{k+1}&\leq& a C_e k^{-b}
+
\left(
\alpha_1^2 G^2
-
2 \alpha_1 c a^\gamma C_e^\gamma
\right)
k^{-2p}
\nonumber\\
&\leq&
C_e (k+1)^{-b},
\end{eqnarray}
where we once again used the fact that $p+\gamma b=2 p$. 
We require (\ref{astar}) to hold for all $a\in[0,1]$. The L.H.S. is concave in $a$ (since $\gamma> 1$), so we will compute the maximizer w.r.t. $a$. Let $D_1 = \alpha_1^2 G^2 k^{-2p}$, $D_2 = C_e k^{-b}$, and $D_3= 2\alpha_1 c C_e^\gamma k ^{-2\gamma b}$. Then let
\begin{eqnarray*}
f(a)=D_1 + D_2 a-D_3 a^\gamma
\end{eqnarray*}
which is the L.H.S. of (\ref{astar}).
Let $a_*$ be the solution to 
$$
0 = f'(a_*) = D_2 - \gamma D_3 a_*^{\gamma - 1},
$$ 
which implies
\begin{eqnarray*}
a_* &=& \left(\frac{D_2}{\gamma D_3}\right)^{\frac{1}{\gamma-1}}
\\
&=&
C_e^{-1}
(2\alpha_1\gamma c)
^{\frac{1}{1-\gamma}}
k^{\frac{1}{\gamma-1}}
= 
C_e^{-1}D_4
\alpha_1^{\frac{1}{1-\gamma}}
k^{\frac{1}{\gamma-1}}
\end{eqnarray*}
where $D_4 = (2\gamma c)^{\frac{1}{1-\gamma}}$. But recall that $a\in[0,1]$ therefore the maximizer of $f(a)$ in $[0,1]$ is given by
\begin{eqnarray*}
\min\left\{1,C_e^{-1}D_4
\alpha_1^{\frac{1}{1-\gamma}}
k^{\frac{1}{\gamma-1}}\right\}.
\end{eqnarray*}
Thus if 
\begin{eqnarray}\label{secondKcond}
k\geq (C_e D_4^{-1})^{\gamma-1}\alpha_1
\end{eqnarray}
then the maximizer in $[0,1]$ is equal to $1$. Substituting the values for $\alpha_1$ and $C_e$ into (\ref{secondKcond}) yields
\begin{eqnarray*}
k&\geq& (C_e D_4^{-1})^{\gamma-1}\frac{c}{G^2}C_e^\gamma
=
\frac{2\gamma}{2\gamma-1}.
\end{eqnarray*}
Since $\gamma> 1$ this is implied by $k\geq 2$. Thus we only need to consider $a=1$ in (\ref{astar}).


The analysis with $a=1$ substituted into (\ref{astar}) was carried out in the proof of Theorem \ref{ThmOptDecay}. Recall that for this choice of stepsize and constant, the inequality (\ref{astar}) is satisfied with $a=1$ for all $k\geq 1$, which completes the proof.

\end{proof}

\section{Convergence Rates for Classical Nonsummable Stepsizes}\label{Sec_decay}
We now turn our attention to nonsummable but square summable stepsize sequences for the subgradient method under HEB. These stepsizes are used frequently for the stochastic and deterministic subgradient method, however their behavior under HEB has not been studied in detail with the exception of \cite{lim2011convergence,supittayapornpong2016staggered}. We will see that these nonsummable stepsizes are slower than the ``descending stairs" stepsizes and summable stepsizes when $\theta> 1/2$. However, in this case the nonsummable stepsizes have the advantage that they do not require $G$, $c$, and $\Omega_1$. We will first state and discuss our results. The proofs are in Section \ref{secProofDecay}. 
\subsection{Results for $\theta\in(0,\frac{1}{2})$}

\begin{theorem}\label{thmDimSum}
Suppose Assumption 3 holds and $0<\theta<1/2$. Let $\alpha_k = \alpha_1 k^{-p}$. Let
\begin{eqnarray}
C_1 &\triangleq& \label{C1Def}
2^{2p\theta+1}
\left(
\left(\frac{\alpha_1 G^2}{c}\right)^{2\theta}
+
\alpha_1^2 G^2
\right)
\\\nonumber
C_2
&\triangleq&
\left(\frac{\alpha_1(1-2\theta)}{2\theta(1-p)}\right)^{\frac{2\theta}{2\theta-1}}.
\end{eqnarray}
Then if
\begin{eqnarray}\label{dcond}
\frac{1}{2(1-\theta)}\leq p\leq 1
\end{eqnarray}
and $\alpha_1$ is chosen so that
\begin{eqnarray}
C_1
&\leq&
\left(\frac{2\theta(1-p)}{\alpha_1(1-2\theta)}\right)^{\frac{2\theta}{1-2\theta}}
(k_0+1)
^{\frac{2\theta(2p(1-\theta)-1)}{1-2\theta}}\label{cc},
\\
\alpha_1
&\leq&
\frac{2\theta(1-p)d(x_1,\calX_h)^{\frac{2\theta-1}{\theta}}}{1-2\theta}
,
\label{cc2}
\end{eqnarray}
then for all $k\geq k_0$
\begin{eqnarray}\label{lowTheta1}
d(x_k,\calX_h)^2
\leq
\max\{C_1,C_2\}
\max\left\{
k^{-2p\theta},
k^{\frac{2\theta(1-p)}{2\theta-1}}
\right\}.
\end{eqnarray}

\end{theorem}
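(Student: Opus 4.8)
The plan is to reduce everything to the scalar recursion \eqref{ABiggy}, $e_{k+1}\le e_k-2\alpha_k c\,e_k^{\gamma}+\alpha_k^2G^2$ with $e_k=d(x_k,\calX_h)^2$, $\gamma=\tfrac{1}{2\theta}>1$ (here $\theta<\tfrac12$), and $\alpha_k=\alpha_1k^{-p}$. Two competing effects drive the decay: the injected noise $\alpha_k^2G^2=\alpha_1^2G^2k^{-2p}$, which keeps $e_k$ from falling below a floor of order $(\alpha_kG^2/c)^{2\theta}\sim k^{-2p\theta}$, and the superlinear restoring term $-2\alpha_kc\,e_k^{\gamma}$, whose balance against the per-step decrement of a power-law envelope yields the deterministic rate $k^{2\theta(1-p)/(2\theta-1)}$. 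These are exactly the two exponents in \eqref{lowTheta1}, carrying the constants $C_1$ (noise floor) and $C_2$ (deterministic decay). Since $\tfrac{1}{2(1-\theta)}\le p$, a short computation shows the deterministic exponent is the larger (less negative) of the two, so asymptotically the $C_2$-term dominates.

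I would prove \eqref{lowTheta1} by induction on $k\ge k_0$, using the hypothesis $e_k\le\max\{C_1k^{-2p\theta},\,C_2k^{2\theta(1-p)/(2\theta-1)}\}$, which is majorized by the product form stated in \eqref{lowTheta1}. The base case is secured by \eqref{cc2}, which rearranges (using $\tfrac{2\theta-1}{2\theta}<0$) to the clean statement $d(x_1,\calX_h)^2\le C_2$. For the inductive step I would split on the size of $e_k$: when $e_k$ lies near or below the noise floor, the update can increase it by at most $\alpha_k^2G^2$, and one checks this keeps it under $C_1(k+1)^{-2p\theta}$; when $e_k$ exceeds the floor, the term $-2\alpha_kc\,e_k^{\gamma}$ furnishes a decrease at least as large as the envelope's drop $C_2\big(k^{2\theta(1-p)/(2\theta-1)}-(k+1)^{2\theta(1-p)/(2\theta-1)}\big)$, delivering $e_{k+1}\le C_2(k+1)^{2\theta(1-p)/(2\theta-1)}$. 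Condition \eqref{cc} is precisely what makes the two regimes consistent: it rearranges to $C_1k^{-2p\theta}\le C_2k^{2\theta(1-p)/(2\theta-1)}$ for every $k\ge k_0+1$, so the noise-floor branch never pokes above the deterministic envelope once the induction is underway.

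The main obstacle is the non-monotonicity of the one-step map $g(e)=e-2\alpha_kc\,e^{\gamma}+\alpha_k^2G^2$: because $\gamma>1$, $g$ is unimodal, increasing on $[0,e_{\mathrm{crit}}]$ with $e_{\mathrm{crit}}=(2\alpha_kc\gamma)^{-1/(\gamma-1)}$ and decreasing thereafter, so the hypothesis $e_k\le\Phi(k)$ does not by itself give $e_{k+1}\le g(\Phi(k))$. I would resolve this by verifying that for $k\ge k_0$ the envelope stays on the increasing branch, i.e. $\Phi(k)\le e_{\mathrm{crit}}$; this holds because $e_{\mathrm{crit}}\sim k^{2\theta p/(1-2\theta)}$ grows while $\Phi(k)$ decays, and the role of $k_0$ (through \eqref{cc}) is exactly to guarantee we are past the crossover. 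The remaining delicate step is the one-variable inequality ensuring the restoring decrease dominates $\Phi(k)-\Phi(k+1)$ for all $k\ge k_0$; after substituting the power laws this reduces to a bound of the type $(k+1)^{-b}\ge k^{-b}-bk^{-b-1}$ combined with the explicit value of $C_2$, and the lower bound $p\ge\tfrac{1}{2(1-\theta)}$ from \eqref{dcond} is what keeps this inequality valid throughout the range.
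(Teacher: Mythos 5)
You have read the theorem's ingredients correctly---\eqref{cc2} is exactly $d(x_1,\calX_h)^2\le C_2$, and \eqref{cc} is exactly $C_1k^{-2p\theta}\le C_2k^{2\theta(1-p)/(2\theta-1)}$ for all $k\ge k_0+1$ (the exponent $\tfrac{2\theta(2p(1-\theta)-1)}{1-2\theta}$ is nonnegative by \eqref{dcond}, so the inequality propagates)---and you have correctly isolated the obstacle that kills a naive envelope induction: the one-step map $g(e)=e-2\alpha_kc\,e^{\gamma}+\alpha_k^2G^2$ is not monotone, so $e_k\le\Phi(k)$ does not yield $e_{k+1}\le g(\Phi(k))$. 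But your proposed repair is where the proof breaks. You claim that \eqref{cc} forces $\Phi(k)\le e_{\mathrm{crit}}(k)=(2\alpha_kc\gamma)^{-1/(\gamma-1)}$ for all $k\ge k_0$; it does not. Writing $e_{\mathrm{crit}}(k)=\bigl(\tfrac{\theta}{\alpha_1 c}\bigr)^{\frac{2\theta}{1-2\theta}}k^{\frac{2p\theta}{1-2\theta}}$ and, once \eqref{cc} is in force, $\Phi(k)=C_2k^{\frac{2\theta(1-p)}{2\theta-1}}$, the inequality $\Phi(k)\le e_{\mathrm{crit}}(k)$ reduces---after raising to the power $\tfrac{1-2\theta}{2\theta}$, in which the $\alpha_1$-dependence cancels exactly---to $k\ge \tfrac{2(1-p)c}{1-2\theta}$. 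This threshold is governed by the error-bound constant $c$ alone, whereas the hypotheses \eqref{dcond}, \eqref{cc}, \eqref{cc2} can all be satisfied with $k_0=1$ simply by taking $\alpha_1$ small (then $C_1\to 0$ and $C_2\to\infty$, so \eqref{cc} and \eqref{cc2} hold trivially). Hence for a problem with large $c$ there is an initial range $k_0\le k<\tfrac{2(1-p)c}{1-2\theta}$ on which your inductive hypothesis permits $e_k=e_{\mathrm{crit}}(k)$, and there the best available one-step bound is $e_{k+1}\le g(e_{\mathrm{crit}})=(1-2\theta)e_{\mathrm{crit}}(k)+\alpha_k^2G^2$, a quantity growing like $k^{\frac{2p\theta}{1-2\theta}}$ that cannot be matched against the decaying envelope $\Phi(k+1)$. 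The induction therefore does not close under the stated hypotheses, and the gap is not cosmetic: the whole difficulty of the regime $\gamma>1$ sits exactly there.

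The paper's proof avoids envelope induction altogether, and that is the missing idea. It partitions iterations by $I=\{k:\alpha_kG^2\ge c\,e_k^{\gamma}\}$ as in \eqref{defI}: for $k\in I$ the definition itself gives $e_k\le(\alpha_kG^2/c)^{2\theta}=O(k^{-2p\theta})$; a transition $k\in I$, $k+1\in I^c$ costs at most $\alpha_k^2G^2$ and produces the constant $C_1$ as in \eqref{case3}; and along a maximal run inside $I^c$ the noise term is absorbed into half the restoring term, $e_{j+1}\le e_j-\alpha_j c\,e_j^{\gamma}$, after which Lemma \ref{PolyakLemma2} (Polyak's lemma) together with Lemma \ref{sumSteps} is applied \emph{to the actual sequence}, not to an envelope. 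That lemma's proof runs through the reciprocal powers $u_k^{-q}$, $q=\gamma-1$, and therefore needs no monotonicity of the one-step map---precisely the feature that sidesteps your obstacle. Conditions \eqref{cc} and \eqref{cc2} are then used only to discard the additive $1$ inside the resulting bracket (inequality \eqref{gg}, and its analogue \eqref{firstI}--\eqref{prettyPleaseFinal} for a run starting at $k=1$), which yields the clean bound $C_2k^{\frac{2\theta(1-p)}{2\theta-1}}$ of \eqref{late}. If you wish to keep your structure, replace the induction on $e_k\le\Phi(k)$ either by this $I/I^c$ case analysis or by an induction on the reciprocal power $e_k^{-\frac{1-2\theta}{2\theta}}$; both routes lead you back to the paper's argument.
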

\begin{proof}
Sec. \ref{secProofDecay}.
\end{proof}

In the following corollary we give the optimal choice for $p$ that makes the two arguments to the max function in (\ref{lowTheta1}) equal. 
\begin{corollary}\label{CorDimSum}
In the setting of Theorem \ref{thmDimSum} with $0<\theta<\frac{1}{2}$ and $C_1$ defined in (\ref{C1Def}),  if $p=\frac{1}{2(1-\theta)}$, and $\alpha_1$ is chosen so that (\ref{cc2}) holds and
\begin{eqnarray}\label{SecondAConstant}
\alpha_1^{\frac{2\theta}{1-2\theta}}
C_1\leq \left(\frac{\theta}{1-\theta}\right)^{\frac{2\theta}{1-2\theta}}
\end{eqnarray}
then for all $k\geq 1$
\begin{eqnarray*}\label{lowTheta2}
d(x_k,\calX_h)^2\leq 
\alpha_1^{\frac{2\theta}{2\theta-1}}
\left(\frac{\theta}{1-\theta}\right)^{\frac{2\theta}{1-2\theta}} 
k^{\frac{-\theta}{1-\theta}}. 
\end{eqnarray*}
If $\alpha_1$ is chosen so that (\ref{SecondAConstant}) is satisfied with equality, then 
\begin{eqnarray*}\label{lowTheta3}
d(x_k,\calX_h)^2\leq 
C_1
k^{\frac{-\theta}{1-\theta}}. 
\end{eqnarray*}

\end{corollary}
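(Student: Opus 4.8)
The plan is to invoke Theorem \ref{thmDimSum} directly with the distinguished exponent $p=\frac{1}{2(1-\theta)}$ and show that this choice does two things at once: it collapses the two competing powers of $k$ in the bound (\ref{lowTheta1}) into a single power, and it turns the generic hypothesis (\ref{cc}) into precisely the assumed condition (\ref{SecondAConstant}), which in turn forces the prefactor $\max\{C_1,C_2\}$ to equal $C_2$ (or $C_1$ in the equality case). Thus no new estimate is needed beyond Theorem \ref{thmDimSum}; the work is entirely in verifying hypotheses and simplifying exponents.

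First I would substitute $p=\frac{1}{2(1-\theta)}$ into the inner $\max$ of (\ref{lowTheta1}). A direct computation gives $-2p\theta=-\frac{\theta}{1-\theta}$, and using $1-p=\frac{1-2\theta}{2(1-\theta)}$ together with $1-2\theta=-(2\theta-1)$ one finds $\frac{2\theta(1-p)}{2\theta-1}=-\frac{\theta}{1-\theta}$ as well. Hence both arguments coincide and $\max\{k^{-2p\theta},\,k^{2\theta(1-p)/(2\theta-1)}\}=k^{-\theta/(1-\theta)}$ for every $k$. Next I would evaluate the constant $C_2$ of Theorem \ref{thmDimSum} at this $p$: the ratio inside $C_2$ reduces to $\frac{\alpha_1(1-\theta)}{\theta}$, and because the exponent $\frac{2\theta}{2\theta-1}$ is negative (as $\theta<\tfrac12$) taking the reciprocal yields $C_2=\alpha_1^{\frac{2\theta}{2\theta-1}}\left(\frac{\theta}{1-\theta}\right)^{\frac{2\theta}{1-2\theta}}$, exactly the constant displayed in the first claimed inequality.

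It then remains to check that the hypotheses of Theorem \ref{thmDimSum} hold and that $\max\{C_1,C_2\}=C_2$. Condition (\ref{dcond}) is satisfied because $p=\frac{1}{2(1-\theta)}$ saturates its lower bound and $p\leq 1\Leftrightarrow\theta\leq\tfrac12$, while (\ref{cc2}) is assumed outright. The key observation is that at this value of $p$ one has $2p(1-\theta)-1=0$, so the factor $(k_0+1)^{2\theta(2p(1-\theta)-1)/(1-2\theta)}$ in (\ref{cc}) equals $1$ for every $k_0$; consequently (\ref{cc}) holds for $k_0=1$ (giving the stated range $k\geq 1$) and reduces precisely to $C_1\leq C_2$, which is a rearrangement of the assumed (\ref{SecondAConstant}). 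Therefore $\max\{C_1,C_2\}=C_2$, and plugging into (\ref{lowTheta1}) gives the first bound. For the sharpened statement, I would observe that if (\ref{SecondAConstant}) holds with equality then $C_1=C_2$, so the prefactor may be written as $C_1$, yielding $d(x_k,\calX_h)^2\leq C_1 k^{-\theta/(1-\theta)}$. The only real care required is bookkeeping with the sign of $2\theta-1$ (negative throughout) when inverting the exponents, and noticing that the vanishing of $2p(1-\theta)-1$ is exactly what permits $k_0=1$; there is no substantive obstacle.
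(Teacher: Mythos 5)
Your proposal is correct and follows exactly the route the paper intends (the paper leaves the corollary's proof implicit as a specialization of Theorem \ref{thmDimSum}): all of your exponent computations check out, the observation that $2p(1-\theta)-1=0$ kills the $k_0$-dependence in (\ref{cc}) and reduces it to $C_1\leq C_2$, i.e.\ to (\ref{SecondAConstant}), is precisely the key point, and the equality case giving prefactor $C_1$ is handled correctly.
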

\begin{proof}
Sec. \ref{secProofDecay}.
\end{proof}

Our derived convergence rate $O(k^{\frac{-\theta}{1-\theta}})$ is faster than the naive application of the classical $O(1/\sqrt{k})$ function value convergence rate, which with the use of HEB results in a rate $d(\hat{x}_k,\calX_h)^2=O(k^{-\theta})$ at the averaged point $\hat{x}_k=\sum\alpha_k x_k/\sum\alpha_k$. Furthermore our result is nonergodic (no averaging is required). 
Thus we see that for $\theta<1/2$ decaying polynomial stepsize sequences can achieve the same convergence rate as RSG \cite{yang2015rsg} and the constant stepsize we derived in Theorem \ref{ThmFixIterComp}. 

\subsection{Results for $\theta\in[\frac{1}{2},1]$}
We now consider nonsummable stepsizes for $\theta\geq 1/2$. The primary advantage of the following stepsize is that it does not require knowledge of $G,c$, or $d(x_1,\calX)^2$. 
\begin{theorem}\label{ThmLargeTheta}
Suppose Assumption 3 holds and $1/2\leq \theta\leq 1$.
Suppose $\alpha_k = \alpha_1 k^{-p}$ for some $p\in(0,1)$ and $\alpha_1>0$. Let $C_1$ be as defined in (\ref{C1Def}), 
\begin{eqnarray*}
C_3 &\triangleq& C_1^{\frac{1+2p(\theta-1)}{1-p}}
\left(
\frac{\alpha_1(1-2^{p-1}) c e}{4p\theta}
\right)^{-\frac{2 p\theta}{1-p}}
\\
C_4 &\triangleq& 16 \left(\frac{8\theta C_1}{\alpha_1 c e}\right)^{2\theta}
\\
C_5 &\triangleq& d(x_1,\calX_h)^{\frac{2+4p(\theta-1)}{1-p}}\left(
\frac{\alpha_1 c e}{4p\theta}
\right)^{-\frac{2 p\theta}{1-p}}.
\end{eqnarray*}
Then for all $k\geq 4$
\begin{eqnarray}\label{BigThetaResult}
d(x_k,\calX_h)^2\leq 4\max\{C_1,C_3,C_4,C_5\}k^{-2p\theta}.
\end{eqnarray}
\end{theorem}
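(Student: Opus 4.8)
The plan is to read off the correct rate from a balance heuristic and then make it rigorous by linearizing the key recursion (\ref{ABiggy}) and solving the resulting linear inequality. Writing $e_k = d(x_k,\calX_h)^2$ and $\gamma=\tfrac{1}{2\theta}$, note that $\theta\in[\tfrac12,1]$ gives $\gamma\in[\tfrac12,1]$, so $t\mapsto t^\gamma$ is \emph{concave} --- this is precisely what distinguishes the present regime from the $\theta<\tfrac12$ case of Theorem \ref{thmDimSum}. Substituting the ansatz $e_k = C k^{-2p\theta}$ into $e_{k+1}\le e_k - 2\alpha_k c e_k^\gamma + \alpha_k^2 G^2$, and using $2p\theta\cdot\gamma = p$, both source terms $2\alpha_k c e_k^\gamma$ and $\alpha_k^2 G^2$ scale like $k^{-2p}$, whereas the increment $e_k-e_{k+1}$ scales like $k^{-2p\theta-1}$, which is of strictly smaller order because $\theta\ge\tfrac12$ forces $2p\theta+1>2p$. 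The leading balance is therefore $\alpha_1^2 G^2 - 2\alpha_1 c C^\gamma\le 0$, i.e. $C\gtrsim(\alpha_1 G^2/c)^{2\theta}$, which is the origin of the floor constant $C_1$ and pins the rate at $k^{-2p\theta}$.

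To linearize, I would run a bootstrap: assuming the target bound $e_j\le M j^{-2p\theta}$ (with $M=4\max\{C_1,C_3,C_4,C_5\}$) up to the current index, the inequality $\gamma-1\le 0$ gives $e_k^\gamma = e_k\,e_k^{\gamma-1}\ge e_k\,(Mk^{-2p\theta})^{\gamma-1}$. Feeding this into (\ref{ABiggy}) and again using $\theta\gamma=\tfrac12$ produces the \emph{linear} recursion
\begin{equation*}
e_{k+1}\le\left(1-\frac{A}{k^{2p(1-\theta)}}\right)e_k + \alpha_1^2 G^2\,k^{-2p},\qquad A\triangleq 2\alpha_1 c\,M^{\gamma-1}.
\end{equation*}
The decisive structural fact is that the exponent $2p(1-\theta)$ lies in $[0,p]\subseteq[0,1)$: the contraction coefficients are not summable, so the product $\prod_j(1-A/j^{2p(1-\theta)})$ decays \emph{sub-exponentially} --- faster than any polynomial in $k$. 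This is why the transient, which depends on the initial distance $d(x_1,\calX_h)$ and on the early behavior of the recursion, is eventually dominated by the polynomial floor. In the complementary regime $e_k\le(\alpha_k G^2/2c)^{2\theta}$ the recursion degenerates to $e_{k+1}\le(\alpha_k G^2/2c)^{2\theta}+\alpha_k^2 G^2$, which is already $O(k^{-2p\theta})$ and is absorbed by $C_1$ --- the prefactor $2^{2p\theta+1}$ bundling the two contributions and converting $k^{-2p\theta}$ into $(k+1)^{-2p\theta}$.

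The technical core is to solve the linear recursion and match constants. I would telescope it as
\begin{equation*}
e_k\le\Big(\prod_{j}\big(1-\tfrac{A}{j^{2p(1-\theta)}}\big)\Big)e_{1}+\alpha_1^2 G^2\sum_{j}\Big(\prod_{i>j}\big(1-\tfrac{A}{i^{2p(1-\theta)}}\big)\Big)j^{-2p},
\end{equation*}
bound each product through $1-x\le e^{-x}$ and an integral estimate of $\sum i^{-2p(1-\theta)}$, and split the convolution sum at the index where the weighted summand peaks. The homogeneous term, carrying $d(x_1,\calX_h)$ through the sub-exponential factor, yields $C_5$; the two pieces of the convolution sum yield $C_3$ and $C_4$, with Euler's $e$ entering via $\sup_{x\ge0}x\,e^{-x}$-type bounds on the weighted products. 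The main obstacle --- and where essentially all the effort goes --- is performing these product and convolution estimates sharply enough that the bound \emph{closes}, i.e. reproduces the \emph{same} constant $M=4\max\{C_1,C_3,C_4,C_5\}$ at step $k+1$ rather than one that drifts with $k$; verifying self-reproduction for large $k$ reduces to the single inequality $M^\gamma\ge\alpha_1 G^2/(2c)$ guaranteed by $M\ge 4C_1$, while the factor $4$ and the starting index $k\ge 4$ exist precisely to absorb the lower-order correction $2p\theta\,M\,k^{2p(1-\theta)-1}$ and the transient constants over the initial segment.
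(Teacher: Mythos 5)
Your plan contains a genuine circularity that the paper's proof is specifically engineered to avoid. The linearization $e_k^\gamma = e_k\,e_k^{\gamma-1}\geq e_k\,(Mk^{-2p\theta})^{\gamma-1}$ is only valid at indices where the target bound $e_k\leq Mk^{-2p\theta}$ \emph{already holds}, so your telescoped expression for $e_{k+1}$ is conditional on the bound holding at every $j\leq k$, including the initial segment $j=1,2,3,\dots$. You never establish this base case, and it is not automatic: it requires $e_1\leq M$ (and more), where $M=4\max\{C_1,C_3,C_4,C_5\}$, yet the only constant carrying information about $d(x_1,\calX_h)$ is $C_5$, whose prefactor $\left(\alpha_1 ce/(4p\theta)\right)^{-2p\theta/(1-p)}$ can be made arbitrarily small by taking $\alpha_1 c$ large; whether $M$ dominates the initial errors is therefore a delicate constant-chasing question (entangled with the compatibility constraint between HEB and the subgradient bound $G$) that the proposal does not engage. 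The paper sidesteps all of this by partitioning iterations via the set $I=\{k:\alpha_kG^2\geq ce_k^\gamma\}$ of (\ref{defI}): on $I$ the bound is immediate from the definition of $I$; on each maximal block of consecutive $I^c$ iterates the $\alpha_k^2G^2$ term is dominated, the sequence decreases monotonically, and the linearization $e_j^{\gamma-1}\geq e_{\mathrm{block\ start}}^{\gamma-1}$ comes for free from that monotonicity --- with the block-start value bounded by $C_1(\cdot)^{-2p\theta}$ when the block follows an $I$-iterate, or equal to the \emph{known} quantity $e_1$ for the initial block (which is exactly where $C_5$ originates). No a priori target bound is assumed anywhere, so there is nothing to bootstrap.

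Even granting the bootstrap, the constants cannot come out as you claim. Substituting the $j$-dependent bound $Mj^{-2p\theta}$ into the contraction gives coefficients $1-Aj^{-2p(1-\theta)}$ with $A=2\alpha_1cM^{\gamma-1}$, so your products decay like $\exp(-\mathrm{const}\cdot k^{1-2p(1-\theta)})$; the paper's block-wise linearization keeps the factor $e_{\mathrm{block\ start}}^{\gamma-1}$ fixed, so its exponentials are driven by the stepsize sum and decay like $\exp(-\mathrm{const}\cdot k^{1-p})$. After the exponential-to-polynomial conversion (Lemma \ref{expToPoly}), your route yields constants with exponents $2p\theta/(1-2p(1-\theta))$, whereas $C_3$ and $C_5$ carry $2p\theta/(1-p)$; the claimed identification ``homogeneous term $\to C_5$, convolution pieces $\to C_3,C_4$'' is therefore not exact, and what you would prove is an $O(k^{-2p\theta})$ bound with \emph{different} constants, not inequality (\ref{BigThetaResult}) as stated. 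Finally, closing the induction is more than ``the single inequality $M^\gamma\geq\alpha_1G^2/(2c)$'': because $A$ shrinks as $M$ grows (the exponent $\gamma-1$ is nonpositive), the homogeneous term and both halves of the convolution split each impose their own inequality with $M$ appearing on both sides, and one must verify that all of them, plus the base case, hold simultaneously for one explicit $M$ --- this works in principle only because every offending power of $M$ is strictly less than one, a fact your proposal neither states nor uses. (Also, a small misattribution: in the paper the factor $4$ arises from a $\max\{k^{2p\theta},i^{2p\theta}\}$ manipulation inside the $I^c$ blocks, and $k\geq 4$ from the requirement $k>3$ in the $C_3$ case, not from absorbing the correction term $2p\theta Mk^{2p(1-\theta)-1}$.)
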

\begin{proof}
Sec. \ref{secProofDecay}.
\end{proof}

Once again this improves on the known classical \emph{ergodic}  convergence rate of $O(k^{-\theta})$. As $p\to1$ the method can get arbitrarily close to the best rate $O(k^{-2\theta})$, however $p=1$ is not covered by our analysis other than the special case $\theta=\frac{1}{2}$ discussed in Theorem \ref{thmD1} and Proposition \ref{PropThta05} below. The decaying stepsize does not require knowledge of $\theta$, $c$, $G$, $h^*$, or $d(x_1,\calX_h)$ to set the parameters $\alpha_1$ and $p$. 
The result holds for arbitrary $\alpha_1>0$ and $p\in(0,1)$. Nevertheless, the constants are affected by the choice of $\alpha_1$ and $p$ as well as practical performance.

The convergence rate for the decaying stepsizes is much slower than DS-SG, the summable stepsizes in Sec. \ref{sec_sum}, and RSG \cite{yang2015rsg}. These methods obtain the rate $O\left(k^{\frac{\theta}{\theta-1}}\right)$ for $\theta>1/2$. 


The case $\theta=1$ in Theorem \ref{ThmLargeTheta} can be compared with the main result of \cite{lim2011convergence} which also proves $O(1/k^2)$ rate of convergence for $d(x_k,\calX_h)^2$. A difference is their result only holds for sufficiently large $k$. They also assume the function satisfies the quadratic growth condition (i.e. $\theta=1/2$ error bound) globally. For problems where $\calC$ is compact, this does not matter, since QG is implied by WS on a compact set. An advantage of \cite{lim2011convergence} is that it holds for stochastic gradient descent.

\subsection{Results for $\theta=\frac{1}{2}$}
For the special case of $\theta=\frac{1}{2}$ our analysis extends to the choice $p=1$.
\begin{theorem}\label{thmD1}
Suppose Assumption 3 holds and $\theta=1/2$. 
Suppose $\alpha_k = \alpha_1 k^{-1}$ and
\begin{eqnarray*}
\alpha_1\leq\frac{1}{c}.
\end{eqnarray*}
Then for $k\geq 1$
\begin{eqnarray}\label{ThmD1Result}
d(x_k,\calX_h)^2
\leq \max\left\{\frac{2 \alpha_1 G^2}{c},d(x_1,\calX_h)^2\right\} k^{-c \alpha_1}.
\end{eqnarray}
\end{theorem}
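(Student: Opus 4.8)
The plan is to specialize the key recursion \eqref{ABiggy} to $\theta=1/2$, where the exponent $\gamma=\frac{1}{2\theta}$ equals $1$ and the recursion becomes \emph{linear}. Writing $e_k=d(x_k,\calX_h)^2$ and substituting $\alpha_k=\alpha_1/k$, one gets $e_{k+1}\le \bigl(1-\frac{2c\alpha_1}{k}\bigr)e_k+\frac{\alpha_1^2 G^2}{k^2}$ for all $k\ge 1$. Setting $\rho\triangleq c\alpha_1$ (so $\rho\le 1$ by the hypothesis $\alpha_1\le 1/c$) and $C\triangleq\max\{\tfrac{2\alpha_1 G^2}{c},\,d(x_1,\calX_h)^2\}$, the target bound \eqref{ThmD1Result} reads $e_k\le C k^{-\rho}$, and I would prove this by induction on $k$.

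The base case $k=1$ is immediate since $C\ge e_1$. For the inductive step I would first record two elementary facts. The definition of $C$ gives $\alpha_1^2 G^2=\alpha_1(\alpha_1 G^2)\le\alpha_1\cdot\frac{cC}{2}=\frac{\rho C}{2}$, controlling the additive noise term. Second, since $t\mapsto(1+t)^{-\rho}$ is convex on $t\ge 0$ and lies above its tangent at $0$, the Bernoulli-type bound $(1+1/k)^{-\rho}\ge 1-\rho/k$ holds for all $k\ge 1$ and $\rho\in(0,1]$. Assuming the coefficient $1-\frac{2\rho}{k}$ is nonnegative (which holds whenever $k\ge 2\rho$), I would apply the inductive hypothesis, factor out $Ck^{-\rho}$, and reduce the desired inequality $e_{k+1}\le C(k+1)^{-\rho}$ to $1-\frac{2\rho}{k}+\frac{\rho}{2k^{2-\rho}}\le(1+1/k)^{-\rho}$; via the Bernoulli bound this in turn follows from $\frac{1}{2k^{1-\rho}}\le 1$, which is clear because $\rho\le 1$ forces $k^{1-\rho}\ge 1$.

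The main obstacle is that the coefficient $1-\frac{2\rho}{k}$ can be negative, which invalidates the monotone use of the inductive hypothesis above. Since $\rho\le 1$, this can only occur at the single transition $k=1\to k=2$ and only when $\rho>1/2$. I would treat this case separately: because $1-2\rho<0$ and $e_1\ge 0$, the term $(1-2\rho)e_1\le 0$ may simply be dropped, giving $e_2\le\alpha_1^2 G^2\le\frac{\rho C}{2}$. It then remains to verify $\frac{\rho C}{2}\le C\,2^{-\rho}$, i.e.\ $\frac{\rho}{2}\le 2^{-\rho}$, which holds on $(1/2,1]$ because $g(\rho)=2^{-\rho}-\rho/2$ is decreasing with $g(1)=0$. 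Combining the generic step (valid for all $k\ge 2$, and for $k=1$ when $\rho\le 1/2$) with this boundary case completes the induction and establishes \eqref{ThmD1Result}.
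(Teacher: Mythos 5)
Your proof is correct, but it takes a genuinely different route from the paper's. The paper proves Theorem \ref{thmD1} (in Sec. \ref{secProofDecay}) with the same partition machinery it uses for Theorems \ref{thmDimSum} and \ref{ThmLargeTheta}: it splits the iterations according to the set $I=\{k:\alpha_k G^2\ge c e_k\}$, bounds iterates in $I$ (and iterates immediately following one in $I$) directly by $O(k^{-1})\le O(k^{-c\alpha_1})$, and controls arbitrarily long runs inside $I^c$ multiplicatively, taking logarithms, using $\ln(1-x)\le -x$ and the harmonic-sum estimate of Lemma \ref{sumSteps} to accumulate the contraction factors $(1-c\alpha_1/j)$ into the power $k^{-c\alpha_1}$. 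You instead run a direct induction on the explicit candidate bound $e_k\le Ck^{-\rho}$ with $\rho=c\alpha_1$, using the Bernoulli-type inequality $(1+1/k)^{-\rho}\ge 1-\rho/k$; your two supporting facts check out ($\alpha_1^2G^2\le \rho C/2$ follows from the definition of $C$, and the reduction of the inductive step to $\tfrac{1}{2}k^{\rho-1}\le 1$ is valid since $\rho\le 1$), and you correctly isolate and dispatch the only step where the coefficient $1-2\rho/k$ can be negative ($k=1$, $\rho>1/2$), where the verification $\rho/2\le 2^{-\rho}$ on $(1/2,1]$ is right. Your argument is more elementary and self-contained: it needs neither the set $I$ nor Lemmas \ref{PolyakLemma2} and \ref{sumSteps}, and it is closer in spirit to the paper's own inductive proofs of Theorems \ref{OptimalP} and \ref{OptimalP2}. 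What the paper's heavier route buys is uniformity: the same $I$/$I^c$ decomposition handles all $\theta\in(0,1]$ and all exponents $p<1$ simultaneously, settings where a clean closed-form inductive candidate like yours is much harder to guess, and the $\theta=1/2$, $p=1$ case then falls out as a boundary instance of that framework with exactly the same constant $\max\{2\alpha_1 G^2/c,\,d(x_1,\calX_h)^2\}$ that you obtain.
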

\begin{proof}
Sec. \ref{secProofDecay}.
\end{proof}

Strongly convex functions with strong convexity parameter $\mu_{sc}$ satisfy the error bound with $\theta=\frac{1}{2}$ and $c=\frac{\mu_{sc}}{2}$. In this case $C_1=\frac{8 G^2}{c^2}$. Thus, for the choice $\alpha_1=\frac{2}{\mu_{sc}}$ we have proved that
\begin{eqnarray*}
d(x_k,\calX_h)^2\leq \frac{1}{k}\max\left\{d(x_1,\calX_h)^2,\frac{32 G^2}{\mu_{sc}^2}\right\}.
\end{eqnarray*}
This result can be compared with several papers. The result \cite[Theorem 6.2]{bubeck2015convex} finds an $O(1/k)$ convergence rate for $h(\hat{x}_k)-h^*$ for a particular averaged point $\hat{x}_k$ under strong convexity. This, combined with HEB implies an $O(1/k)$ rate for $d(\hat{x}_k,\calX_h)^2$.  The work \cite[Thm 1]{nedic2014stochastic} obtained a nonergodic $O(1/k)$ rate for $d(x_k,\calX_h)^2$ in stochastic mirror descent under strong convexity for a similar stepsize sequence to Theorem \ref{thmD1}. The result \cite[Prop. 2.8]{nedic2001convergence} provides convergence rates for the (incremental) subgradient method with stepsize $\alpha_k=\alpha_1k^{-1}$ for all values of $\alpha_1$ under QG. This is more general than Theorem \ref{thmD1} as they cover the case where $\alpha_1>1/c$. However, for $\alpha_1=1/c$, \cite[Prop. 2.8]{nedic2001convergence} only proves $O(\log k/k)$ convergence whereas Theorem \ref{thmD1} implies $O(1/k)$ convergence. 
The result of \cite[Eq. (2.9)]{nemirovski2009robust} says that for strongly convex functions with parameter $\mu_{sc}$, the subgradient method achieves a nonergodic $O(1/k)$ convergence so long as $\alpha_1>\frac{1}{2\mu_{sc}}$.  In contrast we do not require strong convexity but only the weaker error bound. The result can also be compared to \cite[Thm. 4]{karimi2016linear} which proved an $O(1/k)$ rate for the objective function gap under QG. However they additionally require Lipschitz smoothness. Both \cite{nemirovski2009robust} and \cite{karimi2016linear} considered the stochastic subgradient method. 

We also provide another choice of stepsize which guarantees a convergence rate of $O(1/k)$ for $d(x_k,\calX_h)^2$ in the case where $\theta=\frac{1}{2}$. This proof is a direct adaptation of \cite[Thm. 4]{karimi2016linear}. Unlike \cite[Thm. 4]{karimi2016linear}, it does not require smoothness of the objective.
\begin{proposition}\label{PropThta05}
In the setting of Theorem \ref{thmD1}, consider the subgradient method with 
\begin{eqnarray*}
\alpha_k =\frac{2k+1}{2c(k+1)^2}.
\end{eqnarray*}
Then for all $k$
\begin{eqnarray*}
d(x_{k+1},\calX_h)^2 
\leq
\frac{d(x_1,\calX_h)^2}{(k+1)^2}
+
\frac{G^2}{c^2(k+1)}.
\end{eqnarray*}
\end{proposition}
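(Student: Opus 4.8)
The plan is to specialize the key recursion (\ref{ABiggy}) to the case $\theta=\frac12$ and then exploit the fact that the prescribed stepsize is engineered so that a suitably \emph{weighted} error telescopes. Since $\theta=\frac12$ gives $\gamma=\frac{1}{2\theta}=1$, recursion (\ref{ABiggy}) collapses to the \emph{linear} inequality
\begin{equation*}
e_{k+1}\leq (1-2\alpha_k c)\,e_k+\alpha_k^2 G^2,\qquad e_k\triangleq d(x_k,\calX_h)^2.
\end{equation*}
Substituting $\alpha_k=\frac{2k+1}{2c(k+1)^2}$, I would first record the elementary identities $2\alpha_k c=\frac{2k+1}{(k+1)^2}$, hence $1-2\alpha_k c=\frac{k^2}{(k+1)^2}$, together with $\alpha_k^2 G^2=\frac{(2k+1)^2 G^2}{4c^2(k+1)^4}$.

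The crucial observation is that this stepsize is chosen precisely so that $(1-2\alpha_k c)(k+1)^2=k^2$ exactly. Thus, multiplying the recursion by $(k+1)^2$ and setting $w_k\triangleq k^2 e_k$ turns it into the clean telescoping bound
\begin{equation*}
w_{k+1}\leq w_k+(k+1)^2\alpha_k^2 G^2 = w_k+\frac{(2k+1)^2}{4(k+1)^2}\cdot\frac{G^2}{c^2}.
\end{equation*}
Summing this from $i=1$ to $k$ and using $w_1=e_1=d(x_1,\calX_h)^2$ yields
\begin{equation*}
w_{k+1}\leq d(x_1,\calX_h)^2+\frac{G^2}{c^2}\sum_{i=1}^{k}\frac{(2i+1)^2}{4(i+1)^2}.
\end{equation*}

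To finish, I would bound each summand by $1$: the inequality $(2i+1)^2\leq 4(i+1)^2$ is equivalent to $0\leq 4i+3$ and hence holds for every $i\geq 1$, so the sum is at most $k\leq k+1$. This gives $w_{k+1}=(k+1)^2 e_{k+1}\leq d(x_1,\calX_h)^2+\frac{G^2(k+1)}{c^2}$, and dividing through by $(k+1)^2$ produces exactly the claimed bound. I do not anticipate any genuine obstacle here; the only ``trick'' is recognizing the weighting $w_k=k^2 e_k$, which is forced by the identity $(1-2\alpha_k c)(k+1)^2=k^2$ and is precisely the mechanism by which the stepsize in the statement is reverse-engineered, exactly as in the smooth argument of \cite[Thm.~4]{karimi2016linear}. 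Note also that the multiplication by $(k+1)^2$ preserves the inequality automatically, since $1-2\alpha_k c=\frac{k^2}{(k+1)^2}\geq 0$ for this stepsize without any further restriction on the parameters.
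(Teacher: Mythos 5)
Your proposal is correct and follows essentially the same route as the paper's own proof: specialize the key recursion (\ref{ABiggy}) to $\theta=\tfrac12$, multiply by $(k+1)^2$ so that the weighted error $k^2 e_k$ telescopes (using $1-2\alpha_k c = k^2/(k+1)^2$), bound the noise term via $(2k+1)^2\leq 4(k+1)^2$, and divide through by $(k+1)^2$. The only cosmetic difference is that you sum the telescoped inequality and then bound each summand, whereas the paper bounds the noise term by $G^2/c^2$ inside the recursion before unrolling it; the two are the same argument.
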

\begin{proof}
Sec. \ref{secProofDecay}.
\end{proof}

\section{Numerical Experiments}\label{sec_numerical}
In this section we present simulations to demonstrate some of the theoretical findings in this manuscript. We consider two examples satisfying HEB($c,\theta$) with $\theta=1$ to test our proposed descending stairs stepsize choice in DS-SG and our ``double descending stairs" method for unknown $c$, DS2-SG. 
\subsection{Least-Absolute Deviations Regression}
Consider the following problem:
\begin{eqnarray}\label{prob_l1_l1}
\min_x \|Ex-b\|_1:\quad\|x\|_1\leq\tau.
\end{eqnarray}
This objective function is often used in regression problems and in machine learning \cite{hastie2009elements,wang2006regularized,wang2013l1,gao2010asymptotic}. 
Besides the subgradient techniques considered in this manuscript, there are a few other methods which can tackle Prob. (\ref{prob_l1_l1}). The problem can be written as a linear program and solved via any LP solver. A popular option is an interior point method. These are second order methods that rely on computing second order information and solving potentially large linear systems at each iteration. In general they are not competitive with subgradient methods on large scale problems. Simplex methods \cite{barrodale1973improved} are another option. While their typical performance is good, these methods have exponential computational complexity in the worst case. The alternating direction method of multipliers (ADMM) is another approach to solving Prob. (\ref{prob_l1_l1}), however it involves solving a quadratic program at each iteration, placing it in the same complexity class as the interior point methods \cite{eckstein2012augmented}. The primal-dual splitting method of \cite{chambolle2011first} is a first-order method which can tackle Prob. (\ref{prob_l1_l1}). 
The main drawback of the method is that one must know the largest singular value of $E$ in order to choose the stepsizes correctly. As such, it is not directly comparable with the subgradient methods developed in this manuscript which do not require this information. The paper \cite{wang2006regularized} introduces a method for solving Prob. (\ref{prob_l1_l1}) which is similar to the LARS method for solving the LASSO \cite{efron2004least}. The method solves Prob. (\ref{prob_l1_l1}) for an increasing sequence of $\tau$. At every iteration it solves a linear system, using the previous solution in a smart way. However, as far as we are aware, the iteration complexity of this method is unknown. Edgeworth's algorithm is a coordinate descent method for Prob. (\ref{prob_l1_l1}) which has shown promising empirical performance \cite{wu2008coordinate}. However unlike the subgradient methods considered here, the method is not guaranteed to converge to a minimizer. In fact specific examples exist where Edgeworth's algorithm converges to a non-optimal point \cite{li2004maximum}. 

Problem (\ref{prob_l1_l1}) is a polyhedral optimization problem therefore HEB($c,\theta$) is satisfied for all $x$ with $\theta=1$ \cite{yang2015rsg}. However, it is not easy to compute $c$. Note that the constraint set is compact thus DS2-SG is applicable. Projection onto the $\ell_1$ ball can be done in linear time in expectation via the method of \cite{duchi2008efficient}. 

To test the subgradient methods we first consider a random instance of Problem (\ref{prob_l1_l1}). We set $m=100$ and $n=50$ and construct $E$ of size $m\times n$ with i.i.d. $\mathcal{N}(0,1)$ entries. We construct $b$ of size $m\times 1$ with  i.i.d. $\mathcal{N}(0,1)$ entries. We set $\tau=1$. All tested algorithms were initialized to the same point.

To start we test the convergence rates predicted by Theorem \ref{ThmLargeTheta} for decaying stepsizes. We consider two stepsizes $\alpha_k = 0.1 k^{-0.99}$, and $\alpha_k = 0.01 k^{-0.5}$, where the constants were tuned to achieve good performance. In Fig. \ref{fig_decay} we plot the log of $d(x_k,\calX_h)^2$ versus $\log_{10} k$, where $k$ is the number of iterations. An optimal solution $x^*$ is estimated by running DS-SG until it converges to within numerical precision. Looking at the figure it appears that for $k>1000$ the convergence rates are as predicted in Theorem \ref{ThmLargeTheta}. Specifically for the first parameter choice, $d(x_k,\calX_h)^2 \approx O(k^{-1.98})$ and for the second $d(x_k,\calX_h)^2 \approx O(k^{-1})$.

Next we test the performance of DS-SG, RSG \cite{yang2015rsg}, and Shor's method of \cite[Sec. 2.3]{shor2012minimization} (which is very similar to Goffin's stepsize \cite{goffin1977convergence}), alongside the two decaying stepsizes discussed in Fig. \ref{fig_decay}.  For DS-SG we used $\beta_{ds}=4$, $\epsilon=10^{-5}$, $\Omega_{\calC}=4\tau^2$, and $G=\sum_{i=1}^n\|E_i\|$ where $E_i$ is the $i$th column of $E$. For the other methods we chose the parameters in the way suggested by the authors.
 Since $c$ is difficult to estimate, we tuned it to get the best performance in each algorithm (see below for our approach, DS2-SG, which does not need $c$). For DS-SG, RSG, and Shor's algorithm, these were $c=22,15$, and $11$ respectively. 

The log of $d(x_k,\calX_h)^2$ for each of these algorithms is plotted in Fig. \ref{fig_compAll} versus the number $k$ of subgradient evaluations.
Fig. \ref{fig_compAll} confirms that DS-SG has a linear convergence rate, verifying Theorem \ref{thmRestart}. It's performance is very similar to Shor's method. While RSG does appear to obtain linear convergence, it's rate is slower than DS-SG and Shor's method. 

\begin{figure}
\centering
\includegraphics[width=3in]{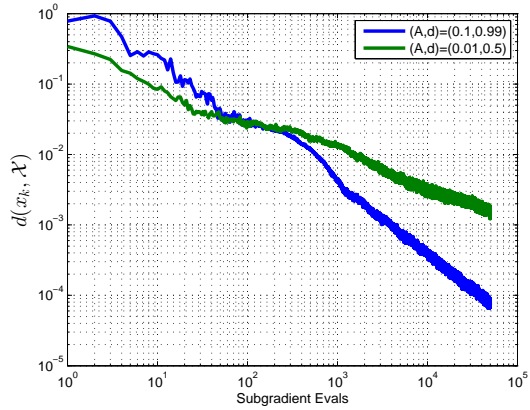}
\caption{Problem (\ref{prob_l1_l1}): Log of square distance to the (unique) solution vs log of number of subgradient evaluations for two decaying stepsizes.}
\label{fig_decay}
\end{figure}

\begin{figure}
\centering
\includegraphics[width=3in]{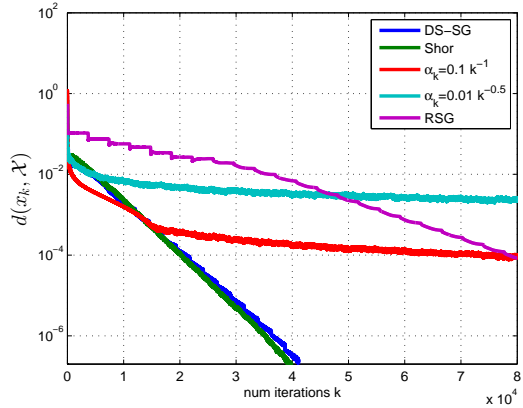}

\caption{Problem (\ref{prob_l1_l1}): Log of square distance to the (unique) solution vs number of subgradient evaluations for DS-SG, RSG, and two decaying stepsizes.}
\label{fig_compAll}
\end{figure}

\begin{figure}
\centering
\includegraphics[width=3in]{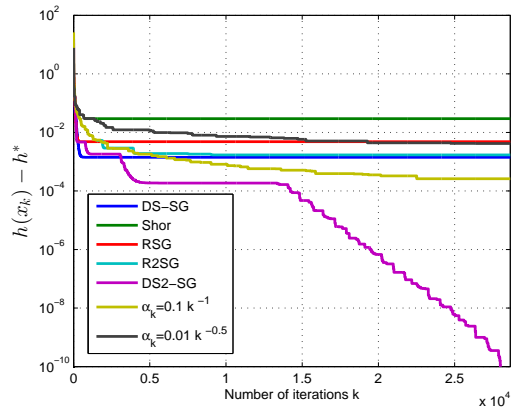}
\caption{Problem (\ref{prob_l1_l1}): Log of $h(x)-h^*$  vs number of subgradient evaluations for DS-SG, RSG, and Shor's method all with $c=100$, R\textsuperscript{2}SG, DS2-SG with the initial $c_1=G=160$, and two decaying stepsizes.}
\label{fig_ad}
\end{figure}

As was mentioned we had to tune $c$ to get good performance of DS-SG, RSG, and Shor's method. We now compare these three methods with our proposed 'doubling trick' variant DS2-SG, which does not need the value of $c$. We also compare with the method R\textsuperscript{2}SG proposed in \cite{yang2015rsg}. Note that this method only works for $\theta<1$ so following the advice of \cite{yang2015rsg}, we use the approximate value of $\hat{\theta}=0.8$, which was chosen because it performed well.  We initialize DS2-SG with the same parameters as DS-SG but with $c_1 = G = 160$. To demonstrate the effect of poorly chosen $c$ in DS-SG, RSG, and Shor's method, we set $c=100$ for all these methods (recall the tuned values were smaller). The results are given in Fig. \ref{fig_ad}. We compare function values and for each algorithm we keep track of the iterate with the smallest function value so far. We see that DS-SG, RSG, and Shor's method converge to suboptimal solutions due to the incorrect value of $c$. However DS2-SG finds the correct solution to within an objective function error of $10^{-10}$. R\textsuperscript{2}SG has slower convergence, which is not surprising since it is not guaranteed to obtain linear convergence when $\theta=1$. It is also encouraging that DS2-SG is faster than the decaying stepsizes $\alpha_k=O( k^{-1})$ and $\alpha_k=O(k^{-0.5})$, since this choice also does not require knowledge of $c$.

\subsection{Least-Absolute Deviations Regression on the ``space.ga" Dataset}

We also apply Prob. (\ref{prob_l1_l1}) on a real dataset. We use the normalized space.ga dataset downloaded from the \emph{libsvm} website.\footnote{\url{https://www.csie.ntu.edu.tw/~cjlin/libsvmtools/datasets/}.} We use a subset of the dataset with $m=100$ and $n=6$, and set $\tau=5$.

Since $c$ is unknown, we compare subgradient methods which do not require it. Thus we compare two decaying stepsizes, $\alpha_k= k^{-1}$ and $\alpha_k=0.1 k^{-0.5}$, and DS2-SG. Note that  R\textsuperscript{2}SG also does not require $c$ but we could not tune it to be competitive on this problem. For DS2-SG, we estimate $G=\sum_{i=1}^n \|E_i\|$ and $\Omega_{\calC}=4\tau^2$ as in the synthetic experiment. We use $\beta_{ds}=2$ and $\epsilon=10^{-12}$. The objective function vs iteration-number is plotted in Fig. \ref{fig_realRegress}. One can see that the decaying stepsizes are faster than DS2-SG in the early iterations but DS2-SG is much faster in the later iterations. The decaying stepsizes were highly sensitive to the choice of $\alpha_1$ which had to be tuned. On the other hand DS-SG was effected by the choice of $\beta_{ds}$. Smaller values of $\beta_{ds}$ lead to better performance early-on, while larger values give better convergence in the latter iterations. In general $\beta_{ds}\in[1.5,4]$ worked well in all of our experiments. 

\begin{figure}
\centering
\includegraphics[width=3in]{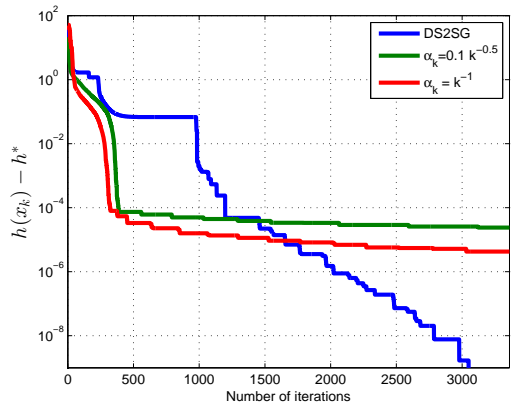}
\caption{Problem (\ref{prob_l1_l1}) applied to space.ga dataset: Log of $h(x)-h^*$  vs number of subgradient evaluations for DS-SG, $\alpha_k=k^{-1}$, and $\alpha_k=0.1 k^{-0.5}$.}
\label{fig_realRegress}
\end{figure}

\subsection{Sparse SVM}
The $\ell_1$-regularized Support Vector Machine (SVM) Problem \cite{zhu20031} is
\begin{eqnarray*}
	\min_{x\in\mathbb{R}^n}\sum_{i=1}^m \max\{0,1- y_i c_i^\top x\}  +\rho\|x\|_1
\end{eqnarray*}
for a dataset $\{c_i,y_i\}_{i=1}^m$ with $c_i\in\mathbb{R}^n$ and $y_i\in\{\pm 1\}$. We will consider the equivalent constrained version
\begin{eqnarray}\label{probSVM}
\min_{x\in\mathbb{R}^n}\sum_{i=1}^m \max\{0,1- y_i c_i^\top x\} : \|x\|_1\leq\tau.
\end{eqnarray}
Since the objective function is polyhedral it satisfies HEB with $\theta=1$ for some unknown $c>0$. Once again since $c$ is unknown, we only consider DS2-SG, R\textsuperscript{2}SG \cite{yang2015rsg}, and the following decaying stepsizes: $\alpha_k=0.1 k^{-1}$ and $\alpha_k=0.01 k^{-0.5}$, where the constants $0.1$ and $0.01$ were tuned to give fast convergence. R\textsuperscript{2}SG only works for $\theta<1$ so cannot be directly applied to this problem. Instead we selected $\hat{\theta}<1$ which gave the fastest convergence. Surprisingly, $\hat{\theta}=0.5$ performed the best even though one might expect $\hat{\theta}\approx 1$ to perform better. For DS2-SG we initialize with $c_1=G$ where $G=\sum_{i=1}^m\|c_i\|$. We used $\beta_{ds}=2$, $\epsilon=10^{-5}$, and $\Omega_{\calC}=4\tau^2$. All four algorithms had the same starting point. 

A random instance of Prob. (\ref{probSVM}) was generated as follows: $n=50$, $m=100$, the entries of $c_i$ are drawn from $\mathcal{N}(0,1)$, the $y_i=\pm 1$ with equal probability, and $\tau=2$. The results are plotted in Fig. \ref{figSVM}. We see that our proposal, DS2-SG, outperforms the others. 

\subsection{Sparse SVM on the ``glass.scale" Dataset}
To test Prob. (\ref{probSVM}) on real data, we download the \emph{glass.scale} dataset from the libsvm website. For this dataset, $n=9$ and $m=214$. There are $6$ different labels so we group labels ``1", ``2", and ``3" together into class: $y=-1$, and labels ``5", ``6", and ``7" into class: $y=1$. We solve Prob. (\ref{probSVM})  with $\tau=2$. 

Again we compare the subgradient methods which do not require $c$, namely DS2-SG, R\textsuperscript{2}SG, and two decaying stepsizes. For DS2-SG we use the same parameters as in the synthetic experiment, except $\beta_{ds}=4$ and $\epsilon=10^{-8}$. The  objective function vs iteration-number is plotted in Figure \ref{fig_realSVM}. Once again we see that DS2-SG outperforms the two decaying stepsizes as well as R\textsuperscript{2}SG.

\begin{figure}[h!]
\centering
\includegraphics[width=3in]{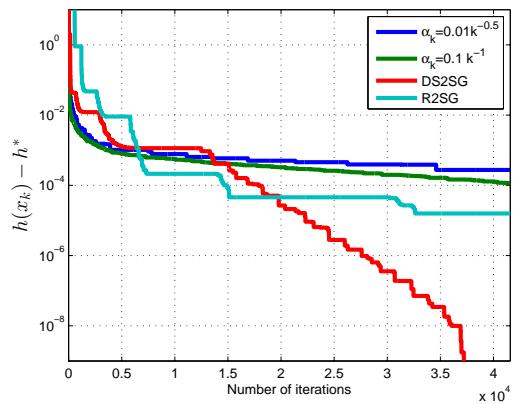}
\caption{Problem (\ref{probSVM}) with randomly generated data: Log of $h(x)-h^*$  vs number of subgradient evaluations for DS2-SG,   R\textsuperscript{2}SG, and two decaying stepsizes.}
\label{figSVM}
\end{figure}

\begin{figure}[h!]
\centering
\includegraphics[width=3in]{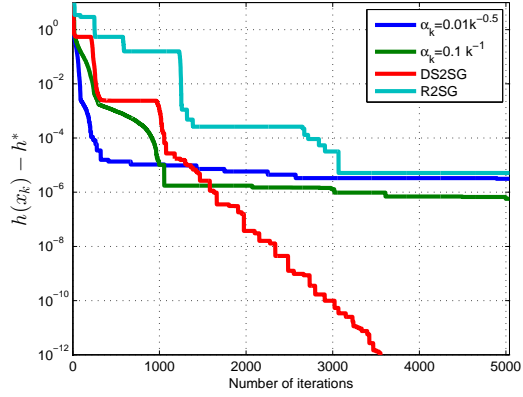}
\caption{Problem (\ref{probSVM}) for the ``glass.scale" dataset: Log of $h(x)-h^*$  vs number of subgradient evaluations for DS2-SG,   R\textsuperscript{2}SG, and two decaying stepsizes.}
\label{fig_realSVM}
\end{figure}

\section{Extensions}\label{secExtend}
As previously mentioned, the key recursion (\ref{KeyRecursion}) can also be derived in the following situations: 1) when a small amount of noise is added to the subgradient, 2) for the incremental subgradient method, 3) under a more general condition than HEB, introduced by Goffin \cite{goffin1977convergence},  4) for the proximal subgradient method, and 5) for relaxed versions of the subgradient method. We now discuss the first three of these in more detail. 

\subsection{Deterministic Noise in the Subgradient when $\theta=1$}
    For the weakly sharp case ($\theta=1$), the subgradient method exhibits resilience to bounded noise. This has been observed in \cite{nedic2010effect,poljak1978nonlinear}. Suppose that at each iteration we have access to a noisy subgradient:
    \begin{eqnarray*}
    \tilde{g}_k = g_k+r_k:g_k\in\partial h(x_k),\|r_k\|\leq R
    \end{eqnarray*}
    and as before the method iterates for all $k\geq 0$
    \begin{eqnarray*}
     x_{k+1}=P_{\calC}(x_k - \alpha_k\tilde{g}_k).
    \end{eqnarray*}
One can repeat the analysis of Sec. \ref{secMy} to show
     \begin{eqnarray*}
     d(x_{k+1},\calX_h)^2
     &\leq&
     d(x_k,\calX_h)^2
     -2\alpha_k  d(x_k,\calX_h)(c-R)
     +2\alpha_k^2 (R^2+G^2).
     \end{eqnarray*}
     We see that this is exactly the same recursion as (\ref{ABiggy}) with the error bound constant $c$ replaced by $c-R$, and $G^2$ replaced by $2(G^2+R^2)$. Thus, if $R<c$, all of the results presented throughout for $\theta=1$ hold with a new error bound constant $\tilde{c}=c-R$, and bound on the subgradients $\tilde{G}^2 = 2(G^2+R^2)$. In particular this refers to Theorems \ref{ThmFix}, \ref{ThmFixIterComp}, \ref{thmRestart}, \ref{thmAdapt}, and \ref{ThmLargeTheta}.
 
 \subsection{Incremental Subgradient Methods}
 Suppose $h(x)=\sum_{i=1}^m h_i(x)$. Such objective functions which are a finite sum of terms often arise in machine learning in the guise of \emph{empirical risk minimization} \cite{hastie2009elements}. For such problems the \emph{incremental} subgradient method can be used \cite{nedic2001convergence}. This method proceeds by computing the subgradient with respect to each individual function $h_i$ in a fixed order. More precisely the method proceeds for $k\geq 1$ with $x_1\in\calC$ as
 \begin{eqnarray}\label{increment1}
 x_{k+1}&=&\psi_{m,k}
 \\\label{increment2}
 \psi_{i,k}&=& P_\calC(\psi_{i-1,k}-\alpha_k g_{i,k}),g_{i,k}\in\partial h_i(\psi_{i-1,k}),\,\,\, i=1,\ldots,m
 \\\label{increment3}
 \psi_{0,k}&=&x_k.
 \end{eqnarray}
  This method has been analyzed extensively in \cite{nedic2001convergence}.
  \begin{proposition}[\cite{nedic2001convergence}]
Suppose Assumption 3 holds. Then for all $k\geq 1$ the iterates of (\ref{increment1})--(\ref{increment3}) satisfy
  \end{proposition}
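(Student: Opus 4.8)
The plan is to mirror the one-step argument of Proposition~\ref{Prop_keyRecur}, but telescoping through the $m$ inner updates $\psi_{1,k},\ldots,\psi_{m,k}$ of a single incremental cycle. Fix $k\ge 1$, let $x^*=x_k^*$ be the projection of $x_k$ onto $\calX_h$, and note $x^*\in\calX_h\subseteq\calC$. First I would use that $x_{k+1}^*$ is the closest point to $x_{k+1}=\psi_{m,k}$ in $\calX_h$ to write $d(x_{k+1},\calX_h)^2=\|\psi_{m,k}-x_{k+1}^*\|^2\le\|\psi_{m,k}-x^*\|^2$, and then expand the right-hand side by peeling off one inner step at a time.

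For each $i$, nonexpansiveness of $P_\calC$ (legitimate since $x^*\in\calC$) together with the subgradient inequality for $h_i$ at the intermediate point $\psi_{i-1,k}$ yields
\begin{align*}
\|\psi_{i,k}-x^*\|^2
&\le \|\psi_{i-1,k}-x^*\|^2 -2\alpha_k\langle g_{i,k},\psi_{i-1,k}-x^*\rangle+\alpha_k^2\|g_{i,k}\|^2\\
&\le \|\psi_{i-1,k}-x^*\|^2 -2\alpha_k\bigl(h_i(\psi_{i-1,k})-h_i(x^*)\bigr)+\alpha_k^2 G^2.
\end{align*}
Summing over $i=1,\ldots,m$ telescopes the distance terms down to $\|x_k-x^*\|^2=d(x_k,\calX_h)^2$ and accumulates $m\alpha_k^2 G^2$ from the step-size terms.

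The crux is that the subgradients are evaluated at the moving points $\psi_{i-1,k}$ rather than at $x_k$. I would control this discrepancy by observing that each inner projection step displaces the iterate by at most $\alpha_k\|g_{i,k}\|\le\alpha_k G$, so a triangle inequality over the cycle gives $\|\psi_{i-1,k}-x_k\|\le (i-1)\alpha_k G$. Combined with the $G$-Lipschitz continuity of each $h_i$ (a consequence of the uniform subgradient bound in Assumption~3), this yields $h_i(\psi_{i-1,k})\ge h_i(x_k)-(i-1)\alpha_k G^2$. Since $\sum_{i=1}^m\bigl(h_i(x_k)-h_i(x^*)\bigr)=h(x_k)-h^*$, the correction contributes $-\alpha_k^2 G^2\sum_{i=1}^m(i-1)=-\alpha_k^2 G^2\,m(m-1)/2$ to the objective sum, which after multiplication by $2\alpha_k$ adds $\alpha_k^2 G^2 m(m-1)$ to the right-hand side. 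Combining the two step-size contributions, $m\alpha_k^2 G^2+m(m-1)\alpha_k^2 G^2=m^2\alpha_k^2 G^2$, so
\begin{align*}
d(x_{k+1},\calX_h)^2\le d(x_k,\calX_h)^2 -2\alpha_k\bigl(h(x_k)-h^*\bigr)+\alpha_k^2 m^2 G^2,
\end{align*}
and a final application of HEB, $h(x_k)-h^*\ge c\,d(x_k,\calX_h)^{1/\theta}$, recovers the key recursion (\ref{KeyRecursion}) with $G$ replaced by $mG$.

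The main obstacle is the bookkeeping of the Lipschitz-error terms: establishing the crude displacement bound $\|\psi_{i-1,k}-x_k\|\le (i-1)\alpha_k G$ by telescoping the inner projection steps, and verifying that the two independent sources of $\alpha_k^2 G^2$ error combine to \emph{exactly} $m^2$ rather than $m(m-1)$ or $m^2/2$. No convexity subtlety beyond the per-component subgradient inequality is required, since the $h_i$ are summed only after the Lipschitz correction has already reduced each term to $h_i(x_k)$.
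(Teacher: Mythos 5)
Your proof is correct, and it is essentially the same argument as the one the paper delegates to the cited reference \cite{nedic2001convergence}: telescope the inner projected steps against a fixed $x^*\in\calX_h$, correct the function values at the moving points $\psi_{i-1,k}$ back to $x_k$ via the displacement bound $\|\psi_{i-1,k}-x_k\|\le (i-1)\alpha_k G$ and the $G$-Lipschitz continuity of each $h_i$, and check that $m\alpha_k^2G^2+m(m-1)\alpha_k^2G^2=m^2\alpha_k^2G^2$ before applying HEB. The only point worth flagging is that Assumption~3 literally bounds subgradients of $h$, whereas your argument (and the constant $m^2G^2$ in the proposition) requires the bound $\|g\|\le G$ to hold for subgradients of each component $h_i$ on $\calC$; this per-component reading is the one intended by the proposition and by \cite{nedic2001convergence}, but it deserves an explicit remark.
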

 \begin{eqnarray*}
 d(x_{k+1},\calX)^2
 \leq
  d(x_{k},\calX)^2
  - 2\alpha_k c d(x_k,\calX)^{\frac{1}{\theta}}
  +
  \alpha_k^2 m^2 G^2.
 \end{eqnarray*}
 
This is the same as the main recursion we analyze in (\ref{ABiggy}) with $G^2$ replaced by $m^2 G^2$. Thus all our results in the following sections apply to the incremental subgradient method (\ref{increment1})--(\ref{increment3}) with this change in constants. 

 \subsection{Goffin's Condition Number}\label{goff_cond}  
  Goffin \cite{goffin1977convergence} discussed a condition number for quantifying the convergence rate of subgradient methods. The condition number is a generalization of the ordinary notion defined for a smooth strongly convex function as the ratio of the Lipschitz constant of the gradient to the strong convexity parameter. In contrast Goffin's condition number requires neither smoothness or strong convexity. The condition number is also more general than Shor's eccentricity measure \cite{shor2012minimization}. The condition number for a convex function $h$ is defined as
  \begin{eqnarray}\label{goffinCond}
  \mu_{h}=\inf\left\{\frac{\langle u ,x-x^*_p\rangle}{\|u\|\|x-x_p^*\|}: x\in\calC\backslash \calX_h,u\in\partial h(x),x_p^*=\proj_{\calX_h}(x)\right\}.
  \end{eqnarray}
 By convexity and the Cauchy-Schwarz inequality $0\leq\mu_h\leq 1$. Goffin showed that if $h$ satisfies HEB$(c,\theta)$ with $\theta=1$ and $\|g\|\leq G$ for all $g\in\partial h(x),x\in\calC$, then it satisfies (\ref{goffinCond}) with
  \begin{eqnarray*}
  \mu_h\geq\frac{c}{G}=\frac{1}{\kappa}
  \end{eqnarray*}
  which proves that functions satisfying (\ref{goffinCond}) with $\mu_h>0$ are more general than weakly sharp functions. 
  
  Our results for $\theta=1$ throughout this manuscript can be extended to functions satisfying (\ref{goffinCond}) with $\mu_h>0$ if we make a slight modification to the subgradient method.
  \begin{lemma}[\cite{goffin1977convergence}]
  Let $\{x_k\}$ be a sequence satisfying
  \begin{eqnarray}\label{normalizedSG}
  x_{k+1}=P_\calC\left(x_k-\alpha_k\frac{g_k}{\|g_k\|}\right):\forall k\geq 1,g_k\in\partial h(x_k),x_1\in\calC.
  \end{eqnarray}
  If $\calX_h$ is nonempty and $h$ is convex, closed, and proper (CCP) and satisfies (\ref{goffinCond}) with $\mu_h>0$, then for all $k\geq 1$
  \begin{eqnarray*}
  d(x_{k+1},\calX_h)^2\leq d(x_k,\calX_h)^2-2\alpha_k\mu_h d(x_k,\calX_h)+\alpha_k^2.
  \end{eqnarray*}\label{lemGoff}
  \end{lemma}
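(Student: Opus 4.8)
The plan is to follow the proof of Proposition \ref{Prop_keyRecur} almost verbatim, reusing nonexpansiveness of $P_\calC$ unchanged while swapping the convexity-plus-HEB step for the Goffin condition number \eqref{goffinCond}, and accounting for the normalization of the subgradient.

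First I would fix $k\geq 1$ with $x_k\notin\calX_h$ (so that $\|g_k\|>0$ and the normalized update is well defined) and let $x_k^*=\proj_{\calX_h}(x_k)$, which is unique because $\calX_h$ is convex and closed. Exactly as in Proposition \ref{Prop_keyRecur}, write
\begin{eqnarray*}
d(x_{k+1},\calX_h)^2=\|x_{k+1}-x_{k+1}^*\|^2\leq\|x_{k+1}-x_k^*\|^2,
\end{eqnarray*}
using that $x_{k+1}^*$ is the closest point of $\calX_h$ to $x_{k+1}$. Since $x_k^*\in\calX_h\subseteq\calC$ we have $x_k^*=P_\calC(x_k^*)$, so nonexpansiveness of the projection gives
\begin{eqnarray*}
\|x_{k+1}-x_k^*\|^2\leq\Big\|x_k-x_k^*-\alpha_k\frac{g_k}{\|g_k\|}\Big\|^2.
\end{eqnarray*}

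Next I would expand the square. The one simplification relative to \eqref{KeyRecursion} is that normalization makes the quadratic term exactly $\alpha_k^2$, since $\alpha_k^2\|g_k\|^2/\|g_k\|^2=\alpha_k^2$, yielding
\begin{eqnarray*}
d(x_{k+1},\calX_h)^2\leq d(x_k,\calX_h)^2-2\alpha_k\frac{\langle g_k,x_k-x_k^*\rangle}{\|g_k\|}+\alpha_k^2.
\end{eqnarray*}
The decisive step is then to bound the cross term, and this is where Goffin's condition replaces HEB. Applying the definition of $\mu_h$ in \eqref{goffinCond} with $x=x_k$, $u=g_k$, and $x_p^*=x_k^*$, the infimum defining $\mu_h$ gives $\langle g_k,x_k-x_k^*\rangle\geq\mu_h\|g_k\|\,\|x_k-x_k^*\|=\mu_h\|g_k\|\,d(x_k,\calX_h)$, hence $\langle g_k,x_k-x_k^*\rangle/\|g_k\|\geq\mu_h d(x_k,\calX_h)$. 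Substituting this produces the claimed recursion.

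Finally I would dispose of the degenerate case $x_k\in\calX_h$: then $d(x_k,\calX_h)=0$ and $x_k-x_k^*=0$, so the cross term in the expansion vanishes and the stated inequality collapses to $d(x_{k+1},\calX_h)^2\leq\alpha_k^2$, which the expansion already delivers. There is no serious obstacle; the only point requiring care is that \eqref{goffinCond} is an infimum over $\calC\setminus\calX_h$, so it can only be invoked off the solution set, which is precisely why the two cases are separated, and one must notice that it is the normalization that converts the generic $\alpha_k^2\|g_k\|^2$ term of \eqref{KeyRecursion} into the clean $\alpha_k^2$ appearing here.
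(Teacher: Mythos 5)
Your proposal is correct and follows essentially the same argument the paper uses: the paper cites Goffin for this lemma rather than proving it, but your proof mirrors the paper's own proof of Proposition \ref{Prop_keyRecur} exactly (closest-point bound, nonexpansiveness of $P_\calC$, expand the square), with the convexity-plus-HEB step replaced by a direct application of \eqref{goffinCond}, which is precisely the intended derivation. Your handling of the two details the paper glosses over — that the normalization yields the clean $\alpha_k^2$ term, and that \eqref{goffinCond} can only be invoked for $x_k\notin\calX_h$ so the degenerate case must be checked separately — is also correct.
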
  
  This is the same recursion as (\ref{ABiggy}) with $G=1$, $\theta=1$,  and $c=\mu_h$. Thus all the results derived in this manuscript for HEB with $\theta=1$ can be derived for the scheme (\ref{normalizedSG}) applied to functions satisfying (\ref{goffinCond}) so long as $c$ is replaced by $\mu_h$ and $G=1$. Also note that Lemma \ref{lemGoff} does not require that the subgradients are uniformly bounded over $\calC$.

\section{Proof of Theorems \ref{thmDimSum}, \ref{ThmLargeTheta}, and \ref{thmD1}}\label{secProofDecay}
\subsection{Preliminaries}\label{secProofDecay1}
In order to determine the convergence rate of the recursion (\ref{ABiggy}) derived in Prop. \ref{Prop_keyRecur} under generic nonsummable stepsizes, we need two Lemmas. We start with a result from \cite{PolyakIntro} which considers (\ref{ABiggy}) when $\theta<\frac{1}{2}$ without the nuisance term $\alpha_k^2 G^2$. 
\begin{lemma}
Suppose
$$
0\leq u_{k+1}\leq u_k - \gamma_k u_k^{1+q}
$$
for $k=0,1,\ldots$ where $\gamma_k\geq 0$ and $q>0$. Then 
$$
u_k
\leq
u_0\left(1+qu_0^q\sum_{i=0}^{k-1}\gamma_i\right)^{-\frac{1}{q}}.
$$\label{PolyakLemma2}
\end{lemma}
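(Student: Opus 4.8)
The plan is to linearize the recursion by passing to the reciprocal power $v_k \triangleq u_k^{-q}$ and showing that this quantity grows at least arithmetically, namely $v_k \geq v_0 + q\sum_{i=0}^{k-1}\gamma_i$. Inverting this bound then yields exactly the claimed estimate, since
\begin{eqnarray*}
u_0^{-q} + q\sum_{i=0}^{k-1}\gamma_i = u_0^{-q}\left(1 + q u_0^q \sum_{i=0}^{k-1}\gamma_i\right),
\end{eqnarray*}
and raising to the power $-1/q$ (a decreasing map) reverses the inequality to give the desired upper bound on $u_k$.

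The core is a one-step estimate $v_{k+1} \geq v_k + q\gamma_k$, obtained as follows. First I would dispose of the degenerate case: if $u_k = 0$, then the hypothesis forces $0 \leq u_{k+1} \leq 0$, so the sequence is identically zero from that point on and the stated bound (whose right-hand side is nonnegative) holds trivially. Assuming $u_k > 0$, I rewrite the hypothesis as $u_{k+1} \leq u_k(1 - \gamma_k u_k^q)$. If the factor $1 - \gamma_k u_k^q$ is nonpositive, then $u_{k+1} = 0$ and we are again in the trivial case; otherwise both sides are strictly positive, and applying the decreasing map $t \mapsto t^{-q}$ gives
\begin{eqnarray*}
u_{k+1}^{-q} \geq u_k^{-q}\,(1 - \gamma_k u_k^q)^{-q}.
\end{eqnarray*}

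The key inequality is then Bernoulli in the form $(1-x)^{-q} \geq 1 + qx$ for $x \in [0,1)$ and $q > 0$, which follows from the convexity of $x \mapsto (1-x)^{-q}$ and its tangent line at $x=0$. Applying this with $x = \gamma_k u_k^q$ yields
\begin{eqnarray*}
u_{k+1}^{-q} \geq u_k^{-q}\left(1 + q\gamma_k u_k^q\right) = u_k^{-q} + q\gamma_k,
\end{eqnarray*}
which is precisely $v_{k+1} \geq v_k + q\gamma_k$. Telescoping this from $0$ to $k-1$ gives $v_k \geq v_0 + q\sum_{i=0}^{k-1}\gamma_i$, and inverting completes the argument.

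I expect the only real subtlety to be the bookkeeping of the degenerate cases (when some $u_k$ vanishes, or when the factor $1 - \gamma_k u_k^q$ would be negative); these must be handled so that the reciprocal $u_k^{-q}$ is well defined exactly where needed, and so that the final bound is seen to hold vacuously once the sequence collapses to zero. The analytic heart of the proof—the Bernoulli/convexity step and the telescoping—is routine once that case analysis is in place.
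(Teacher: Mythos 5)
Your proof is correct. Note that the paper itself does not prove this lemma at all --- its ``proof'' is only the citation \cite[Lemma 6 pp. 46]{PolyakIntro} --- so there is no in-paper argument to compare against; what you have written is the standard proof of Polyak's lemma (and essentially the one in Polyak's book): pass to $v_k = u_k^{-q}$, use the Bernoulli-type bound $(1-x)^{-q}\geq 1+qx$ for $x\in[0,1)$, obtain the arithmetic growth $v_{k+1}\geq v_k + q\gamma_k$, and telescope. The only imprecision is the sentence claiming that when $1-\gamma_k u_k^q>0$ ``both sides are strictly positive'': the hypothesis is only an upper bound on $u_{k+1}$, so $u_{k+1}$ may still vanish even when the factor is positive; but this is exactly the degenerate situation you already flag (once some $u_j=0$ the sequence is identically zero thereafter and the bound holds trivially), so the fix is the bookkeeping you describe, not a new idea. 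With that case analysis spelled out --- telescope over the initial segment where the sequence is positive, and dispose of everything after the first zero trivially --- the argument is complete and self-contained, which is arguably an improvement over the paper's bare citation.
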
 
\begin{proof}
\cite[Lemma 6 pp. 46]{PolyakIntro}.
\end{proof} 
We will also use the following estimates for the sum of stepsizes $\sum_{i=k_0}^k\alpha_i$. 
\begin{lemma}\label{sumSteps}
Let $k\geq k_0\geq 1$.
\begin{enumerate}
\item If $p\in(0,1)$
\begin{eqnarray*}
\sum_{i=k_0}^k i^{-p}\geq \frac{(k+1)^{1-p}-k_0^{1-p}}{1-p}.
\end{eqnarray*}
\item 
If $p=1$ 
\begin{eqnarray*}
\sum_{i=k_0}^ki^{-p}\geq \ln\frac{k+1}{k_0}.
\end{eqnarray*}
\end{enumerate} 	
\end{lemma}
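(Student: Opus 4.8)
The plan is to prove both bounds simultaneously via the standard integral-comparison technique for monotone functions. The key observation is that $f(x) = x^{-p}$ is positive and \emph{decreasing} on $[1,\infty)$ for every $p > 0$. Consequently, for each integer $i \geq 1$ and every $x \in [i, i+1]$ we have $f(i) \geq f(x)$, and integrating this pointwise inequality over the unit interval gives
\begin{eqnarray*}
i^{-p} = \int_i^{i+1} i^{-p}\, dx \geq \int_i^{i+1} x^{-p}\, dx.
\end{eqnarray*}
First I would establish this one-step estimate, then sum it over $i = k_0, k_0+1, \ldots, k$ and use additivity of the integral over adjacent intervals to telescope the right-hand side into a single integral:
\begin{eqnarray*}
\sum_{i=k_0}^k i^{-p} \geq \sum_{i=k_0}^k \int_i^{i+1} x^{-p}\, dx = \int_{k_0}^{k+1} x^{-p}\, dx.
\end{eqnarray*}

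The remaining step is to evaluate the definite integral in each of the two regimes. For part (1), when $p \in (0,1)$ the antiderivative of $x^{-p}$ is $x^{1-p}/(1-p)$, so
\begin{eqnarray*}
\int_{k_0}^{k+1} x^{-p}\, dx = \frac{(k+1)^{1-p} - k_0^{1-p}}{1-p},
\end{eqnarray*}
which is exactly the claimed bound. For part (2), when $p = 1$ the antiderivative is $\ln x$, giving
\begin{eqnarray*}
\int_{k_0}^{k+1} x^{-1}\, dx = \ln(k+1) - \ln(k_0) = \ln\frac{k+1}{k_0},
\end{eqnarray*}
again matching the statement.

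There is no genuine obstacle here: the entire argument rests on the elementary fact that a decreasing nonnegative function may be lower-bounded termwise by its right-endpoint integral, and the resulting integrals admit closed forms. The only point requiring mild care is keeping the limits of integration consistent, namely running from $k_0$ to $k+1$ (rather than to $k$), which is what yields the sharp $(k+1)^{1-p}$ and $\ln(k+1)$ terms needed in the downstream applications of this lemma in Section~\ref{secProofDecay}.
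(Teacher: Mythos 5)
Your proof is correct and is precisely the ``straightforward integral test'' that the paper invokes without elaboration: lower-bounding each term $i^{-p}$ by $\int_i^{i+1}x^{-p}\,dx$, telescoping the sum into $\int_{k_0}^{k+1}x^{-p}\,dx$, and evaluating the antiderivative in the two regimes $p\in(0,1)$ and $p=1$. Nothing is missing, and your attention to the upper limit $k+1$ is exactly the detail that makes the stated bounds come out sharp.
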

\begin{proof}
A straightforward integral test.
\end{proof}

\subsection{Main Proof for Theorems \ref{thmDimSum} and \ref{ThmLargeTheta}}
Continuing with the main analysis, the goal is to derive convergence rates for a sequence $e_k$ satisfying (\ref{ABiggy}). To this end, let
\begin{eqnarray}\label{defI}
I=\{k: \alpha_k G^2\geq  c e_k^\gamma\}.
\end{eqnarray}
Recall the notation $\gamma=1/(2\theta)$. 
We will consider three types of iterates and bound the convergence rate in each case. First, for those iterates $k\in I$ it is easy to derive the convergence rate. Second, we will bound the rate for an iterate in $I^c$ when the previous iterate is in $I$. Finally we will consider $s$ consecutive iterates in $I^c$, for which we can use the inequality in (\ref{defI}) to simplify recursion (\ref{ABiggy}). Note that $s$ can be arbitrarily large. In particular when $I$ is finite there are an unbounded number of consecutive iterates in $I^c$. Together these three cases cover all possible iterates. 

First for, $k\in I$ and $\alpha_k>0$
\begin{eqnarray*}
	\alpha_k c e_k^\gamma \leq \alpha_k^2 G^2\implies e_k\leq \left(\frac{\alpha_k G^2}{c}\right)^{\frac{1}{\gamma}}.
\end{eqnarray*}
Thus the rate of $e_k$ is $O\left(\alpha_k^{\frac{1}{\gamma}}\right)$ for $k\in I$. In particular since $\alpha_k=\alpha_1k^{-p}$, then for $k\in I$ and $\alpha_1>0$
\begin{eqnarray}\label{firstCase}
e_k\leq \left(\frac{\alpha_1 G^2}{c}\right)^{2\theta} k^{-2 p\theta}.
\end{eqnarray}

Now assume $k\in I$ and $k+1\in I^c$. Then
\begin{eqnarray}\label{z}
	e_{k+1}\leq e_k + \alpha_k^2 G^2 \leq \left(\frac{\alpha_k G^2}{c}\right)^{\frac{1}{\gamma}}+ \alpha_k^2 G^2.
\end{eqnarray}
Now since $\frac{1}{\gamma}=2\theta\in(0,2)$, for $k\geq 1$
\begin{eqnarray*}
k^{-2p\theta}\geq k^{-2p}.
\end{eqnarray*}
Therefore (\ref{z}) implies that for $k\in I$, $k+1\in I^c$, and $k\geq 1$,
\begin{eqnarray}\label{case3}\label{case33}
e_{k+1}
&\leq&
C_1(k+1)^{-2p\theta}
\end{eqnarray}
where
\begin{eqnarray*}
C_1 = 
2^{2p\theta}
\left(
\left(\frac{\alpha_1 G^2}{c}\right)^{\frac{1}{\gamma}}
+
\alpha_1^2 G^2
\right).
\end{eqnarray*}

Next assume $k\in I$, $k+1\in I^c$, and $k+i\in I^c$ for $i=2,\ldots s$ for some $s\geq 2$. Then for $i=2,\ldots s$
\begin{eqnarray}\label{bigrecursion}
e_{k+i}< e_{k+i-1}-\alpha_kc e_{k+i-1}^\gamma.
\end{eqnarray}
To analyze the recursion (\ref{bigrecursion}) we consider $\theta<\frac{1}{2}$ and $\theta\geq \frac{1}{2}$ separately.

\noindent 
{\bf \underline{Case 1}: $\boldsymbol{\theta<\frac{1}{2}}$.} 

\noindent
 Now since $\gamma> 1$ we can apply Lemma \ref{PolyakLemma2} along with Lemma \ref{sumSteps} to (\ref{bigrecursion}) and derive for $i=2,\ldots,s$
\begin{eqnarray}
e_{k+i}
&\leq& 
e_{k+1}
\left[
1+\frac{1-2\theta}{2\theta}e_{k+1}^{\frac{1-2\theta}{2\theta}}\sum_{j=1}^{i-1}\alpha_{k+j}
\right]^{\frac{2\theta}{2\theta-1}}
\nonumber\\\label{flow}
&\leq&
e_{k+1}
\left[
1+\frac{\alpha_1(1-2\theta)}{2\theta(1-p)}e_{k+1}^{\frac{1-2\theta}{2\theta}}
\left(
(k+i)^{1-p}
-(k+1)^{1-p}
\right)
\right]^{\frac{2\theta}{2\theta-1}}.
\end{eqnarray}
Now consider the condition given in (\ref{cc}).
Note that since $p$ satisfies (\ref{dcond}), if (\ref{cc}) holds for $k=k_0$, it holds for all $k>k_0$. In particular if it holds for $k=0$, then it holds for all $k$. Continuing, 
if (\ref{cc}) holds then for all $k>k_0$
\begin{eqnarray}
	1-\frac{\alpha_1(1-2\theta)}{2\theta(1-p)}e_{k+1}^{\frac{1-2\theta}{2\theta}}(k+1)^{1-p}\geq 0\label{gg}
\end{eqnarray}
where we have used the fact that $k+1\in I^c$. Therefore since (\ref{gg}) holds we can simplify (\ref{flow}) to say that for $k\in I$ and $k+i\in I^c$ for $i=2,3,\ldots,s$, and $k>k_0$,
\begin{eqnarray}
e_{k+i}&\leq& e_{k+1}
\left[\frac{\alpha_1(1-2\theta)}{2\theta(1-p)}e_{k+1}^{\frac{1-2\theta}{2\theta}}
(k+i)^{1-p}
\right]^{\frac{2\theta}{2\theta-1}}
\nonumber\\
&\leq&
\left(\frac{\alpha_1(1-2\theta)}{2\theta(1-p)}\right)^{\frac{2\theta}{2\theta-1}}
(k+i)^{\frac{2\theta(1-p)}{2\theta-1}}
\label{late}.
\end{eqnarray}
The final case to consider is when $i=1,2,\ldots,s$ are in $I^c$. In this case, the same bound (\ref{flow}) can be derived but with $e_{1}$ replacing $e_{k+1}$. Thus for $i=2,3,\ldots s$ in $I$
\begin{eqnarray}
e_{i}
\leq
e_1
\left[1+\frac{\alpha_1(1-2\theta)}{2\theta(1-p)}e_{1}^{\frac{1-2\theta}{2\theta}}
\left(
i^{1-p}-1
\right)
\right]^{\frac{2\theta}{2\theta-1}}\label{firstI}.
\end{eqnarray}
Thus if $\alpha_1$ is chosen to satisfy (\ref{cc2}) then
\begin{eqnarray}
e_{i}\label{prettyPleaseFinal}
\leq
\left(\frac{\alpha_1(1-2\theta)}{2\theta(1-p)}\right)^{\frac{2\theta}{2\theta-1}}
i^{\frac{2\theta(1-p)}{2\theta-1}}.
\end{eqnarray} 
Combining (\ref{firstCase}), (\ref{case3}), (\ref{late}), and (\ref{prettyPleaseFinal}) establishes (\ref{lowTheta1}) and concludes the proof of Theorem \ref{thmDimSum}.

\noindent 
{\bf\underline{Case 2}: $\boldsymbol{\theta\geq\frac{1}{2}}$}

\noindent
Next we consider the case where $\frac{1}{2}\leq\theta\leq 1$ which will finish the proof of Theorem \ref{ThmLargeTheta}. Before commencing we introduce the following Lemma which allows us to bound a decaying exponential by an appropriately scaled decaying polynomial of any degree.
\begin{lemma}\label{expToPoly}
Suppose $\delta>0$, then if $C_\delta\geq e^{-\delta}\delta^\delta$,
\begin{eqnarray}\label{ap}
\exp(-x)\leq C_\delta x^{-\delta}\quad\forall x>0.
\end{eqnarray}

\end{lemma}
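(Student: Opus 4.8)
The plan is to reduce the claimed inequality to a one-variable maximization. Since $x > 0$ and $\delta > 0$, multiplying both sides of \eqref{ap} by $x^\delta$ shows that $\exp(-x) \leq C_\delta x^{-\delta}$ holds for all $x > 0$ if and only if
\begin{eqnarray*}
g(x) \triangleq x^\delta \exp(-x) \leq C_\delta \quad \forall x > 0.
\end{eqnarray*}
Thus it suffices to compute $\sup_{x>0} g(x)$ and verify that it equals $e^{-\delta}\delta^\delta$; the hypothesis $C_\delta \geq e^{-\delta}\delta^\delta$ then delivers the result immediately.

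To locate the maximum I would work with $\ln g(x) = \delta \ln x - x$, which is valid since $g(x) > 0$ for $x > 0$. Differentiating gives $(\ln g)'(x) = \delta/x - 1$, which vanishes uniquely at $x = \delta$ and is positive for $x < \delta$ and negative for $x > \delta$. Hence $g$ is increasing on $(0,\delta)$ and decreasing on $(\delta,\infty)$, so $x = \delta$ is the global maximizer on $(0,\infty)$ (one can also note $g(x)\to 0$ as $x\to 0^+$ and as $x\to\infty$, so the interior critical point is the global max). Evaluating, $g(\delta) = \delta^\delta \exp(-\delta)$.

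Combining these two steps, for every $x > 0$ we have $x^\delta \exp(-x) = g(x) \leq g(\delta) = e^{-\delta}\delta^\delta \leq C_\delta$, which is exactly \eqref{ap} after dividing by $x^\delta$. There is no real obstacle here: the only point requiring a word of care is confirming that the single stationary point is a global maximum rather than a local one, which follows from the sign analysis of $(\ln g)'$ (equivalently, the strict concavity of $\ln g$, whose second derivative is $-\delta/x^2 < 0$). This completes the proof.
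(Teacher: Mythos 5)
Your proof is correct and is essentially the same argument as the paper's: both reduce the inequality to maximizing $\delta\ln x - x$ (equivalently $x^\delta e^{-x}$) over $x>0$, identify the unique maximizer $x=\delta$ via concavity, and obtain the threshold value $e^{-\delta}\delta^\delta$. The only difference is presentational — you work with $g(x)=x^\delta e^{-x}$ directly while the paper stays in log space throughout — which is immaterial.
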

\begin{proof}
Taking logs of both sides of (\ref{ap}) yields
\begin{eqnarray*}
-x\leq -\delta\ln x + \beta_\delta\quad\forall x>0
\end{eqnarray*}
where $\beta_\delta=\ln C_\delta$. Therefore
\begin{eqnarray*}
\beta_\delta
&\geq& \delta\ln x - x\quad\forall x>0
\end{eqnarray*}
which implies
\begin{eqnarray*}
\beta_\delta
&\geq& 
\max_{x>0}\{\delta\ln x - x\}.
\end{eqnarray*}
The right hand side is a smooth concave coercive maximization problem which therefore has a unique solution given by $x^*=\delta$. Hence
\begin{eqnarray*}
\beta_{\delta}\geq \delta\ln\delta - \delta
\end{eqnarray*}
which implies the Lemma. 
\end{proof}

Continuing, we consider $k\in I$, $k+1\in I^c$, and $k+i\in I^c$ for $i=2\ldots,s$ in the case where $\theta\geq\frac{1}{2}$, so $\gamma\leq 1$. Then since $k+i\in I^c$ for $i=2,\ldots s$,
\begin{eqnarray*}
0\leq\frac{e_{k+i-1}}{e_{k+1}}\leq 1\implies \left(\frac{e_{k+i-1}}{e_{k+1}}\right)^{\gamma}\geq \frac{e_{k+i-1}}{e_{k+1}}
\implies
e_{k+i-1}^\gamma \geq e_{k+1}^{\gamma-1}e_{k+i-1}.
\end{eqnarray*} 

Thus for $k\in I$, $k+1\in I^c$, and $k+i\in I^c$ for $i=2,\ldots, s$ for some $s\geq 2$
\begin{eqnarray}
e_{k+i}&\leq& e_{k+i-1}-\alpha_{k+i-1} c e_{k+i-1}^\gamma
\nonumber\\\label{innovate}
&\leq& 
e_{k+i-1}-\alpha_{k+i-1} e_{k+1}^{\gamma-1} c  e_{k+i-1}.
\end{eqnarray}
Now taking logs and using $\log(1-x)\leq-x$, 
\begin{eqnarray*}
\ln e_{k+i}
&\leq&
\ln e_{k+i-1} + \ln(1-e_{k+1}^{\gamma-1} c \alpha_{k+i-1} )
\\
&\leq&
\ln e_{k+i-1} - e_{k+1}^{\gamma-1} c\alpha_{k+i-1} .
\end{eqnarray*}
Now summing and using Lemma \ref{sumSteps} 
\begin{eqnarray*}
\ln e_{k+i}
&\leq&
\ln e_{k+1}-\alpha_1  e_{k+1}^{\gamma-1}c\sum_{i=k+1}^{k+i-1}i^{-p}
\\
&\leq&
\ln e_{k+1}-\frac{\alpha_1 e_{k+1}^{\gamma-1}c }{1-p}\left(
(k+i)^{1-p}-(k+1)^{1-p}
\right).
\end{eqnarray*}
This leads to
\begin{eqnarray}
e_{k+i}
&\leq&
e_{k+1}
\exp
\left\{-\frac{\alpha_1 e_{k+1}^{\gamma-1} c}{1-p}\left((k+i)^{1-p}-(k+1)^{1-p}\right)
\right\}\label{expForm}
\\\nonumber
&=&
\exp
\left\{-\frac{\alpha_1 e_{k+1}^{\gamma-1} c(k+i)^{1-p}}{1-p}\left(1-\left(\frac{k+1}{k+i}\right)^{1-p}\right)
\right\}.
\end{eqnarray}
We further consider two possible cases. If $i\geq k$, then 
\begin{eqnarray*}
\frac{k+1}{k+i}
\leq
\frac{k+1}{k+k}
=
\frac{1}{2}+\frac{1}{2k}
\end{eqnarray*}
therefore by concavity of $t^{1-p}$
\begin{eqnarray*}
\left(
\frac{k+1}{k+i}
\right)^{1-p}
\leq
2^{p-1}\left[1+\frac{1-p}{k}\right].
\end{eqnarray*}
Take 
$
k>3
$
so that
\begin{eqnarray*}
\frac{2^{p-1}(1-p)}{k}\leq \frac{1-2^{p-1}}{2}.
\end{eqnarray*}
Hence
\begin{eqnarray*}
1-\left(\frac{k+1}{k+i}\right)^{1-p}
\geq 
1-2^{p-1}\left[1+\frac{1-p}{k}\right]
\geq
1-2^{p-1}-\frac{2^{p-1}(1-p)}{k}
\geq 
 \frac{1-2^{p-1}}{2}.
\end{eqnarray*}
Hence if $3< k\leq i$ then 
\begin{eqnarray}
e_{k+i}&\leq& e_{k+1}\exp\left(
-\frac{(1-2^{p-1})\alpha_1  e_{k+1}^{\gamma-1} c}{2(1-p)}(k+i)^{1-p}
\right).
\end{eqnarray}
Now by Lemma \ref{expToPoly} for any $\delta_1>0$,
\begin{eqnarray*}
&&\exp\left\{-\frac{\alpha_1(1-2^{p-1})c e_{k+1}^{\gamma-1}}{2(1-p)}(k+i)^{1-p}\right\}
\\
&\leq&
\delta_1^{\delta_1}e^{-\delta_1} e_{k+1}^{1+\delta_1(1-\gamma)}
\left(
\frac{\alpha_1(1-2^{p-1})c}{2(1-p)}(k+i)^{1-p}
\right)^{-\delta_1}.
\end{eqnarray*}
Therefore using (\ref{case3}) for any $k\leq i$ and $k>3$ 
\begin{eqnarray}
e_{k+i}&\leq& 
\delta_1^{\delta_1}
C_1^{1+\delta_1(1-\gamma)}
\left(\frac{\alpha_1(1-2^{p-1}) c e }{2(1-p)}\right)^{-\delta_1}
(k+i)^{-\delta_1(1-p)}.
\label{precase}
\end{eqnarray}
Taking $\delta_1=\frac{2 p\theta}{1-p}$ and simplifying (\ref{precase}) yields
\begin{eqnarray}
\label{somecase}
e_{k+i}&\leq& 
C_1^{\frac{1+2p(\theta-1)}{1-p}}
\left(
\frac{\alpha_1(1-2^{p-1}) c e}{4p\theta}
\right)^{-\frac{2 p\theta}{1-p}}
(k+i)^{-2 p\theta}.
\end{eqnarray}

Next consider $k\geq i>1$. Now
\begin{eqnarray}
(k+i)^{1-p} - (k+1)^{1-p}
&=&
(k+i)^{1-p}
\left(
1-
\left(
\frac{k+1}{k+i}
\right)^{1-p}
\right)
\nonumber\\
&=&
(k+i)^{1-p}
\left(
1-
\left(
1-\frac{i-1}{k+i}
\right)^{1-p}
\right)
\nonumber\\
&\geq&
(k+i)^{1-p}
\left(
1-
\left(1-
\frac{i-1}{2k}
\right)^{1-p}
\right)
\nonumber\\
&\geq&
\frac{
(1-p)(k+i)^{1-p} 
(i-1)
}{2k}
\label{conc}\\\nonumber
&\geq&
\frac{1-p}{2}
k^{-p}
(i-1)
\end{eqnarray}
where in (\ref{conc}) we used the concavity of $t^{1-p}$.
Thus substituting this into (\ref{expForm}) implies for $k\geq i$
\begin{eqnarray*}
e_{k+i}
\leq 
e_{k+1}
\exp\left(
\frac{-\alpha_1  e_{k+1}^{\gamma-1} c(i-1)}
{2k^p}
\right).
\end{eqnarray*}
Therefore for all $\delta_2\geq 0$
it follows Lemma \ref{expToPoly} that
\begin{eqnarray}
e_{k+i}
&\leq&
e_{k+1}\exp\left(
\frac{-\alpha_1  e_{k+1}^{\gamma-1}c(i-1)}
{2k^{p}}
\right)
\nonumber\\
&\leq&
\delta_2^{\delta_2} e_{k+1}
\left(
\frac{\alpha_1  e_{k+1}^{\gamma-1} c(i-1)e}
{2k^{p}}
\right)^{-\delta_2}
\nonumber\\\label{step}
&\leq&
 C_1^{1+\delta_2(1-\gamma)}
\left(\frac{4\delta_2}{c \alpha_1 e}\right)^{\delta_2}  
k^{-2p\theta(1+\delta_2(1-\gamma))} k^{p\delta_2}i^{-\delta_2}
\end{eqnarray}
where we used $e_{k+1}\leq C_1 k^{-2p\theta}$ and $(i-1)^{-\delta_2}\leq 2^{\delta_2} i^{-\delta_2}$.
Now if we choose 
\begin{eqnarray}\label{optDelta2}
\delta_2=2\theta
\end{eqnarray}
then (\ref{step}) implies
\begin{eqnarray}\label{steps}
e_{k+i}\leq C_4 i^{-2\theta}
\end{eqnarray}
where
\begin{eqnarray}
C_4 = \left(\frac{8\theta C_1}{c \alpha_1 e}\right)^{2\theta}.  
\end{eqnarray}
Thus combining $e_{k+i}\leq e_{k+1}\leq C_1k^{-2p\theta}$ and (\ref{steps}) implies that for $i\leq k$
\begin{eqnarray*}
e_{k+i}\leq \max\{C_1,C_4\}\min\{k^{-2p\theta},i^{-2\theta}\}.
\end{eqnarray*}
Now since $-2\theta<-2p\theta$,
\begin{eqnarray*}
e_{k+i}\leq \max\{C_1,C_4\}\min\{k^{-2p\theta},i^{-2p\theta}\}\leq \frac{\max\{C_1,C_4\}}{\max\{k^{2p\theta},i^{2p\theta}\}}.
\end{eqnarray*}
If $2p\theta\geq 1$ then by convexity of $t^{2p\theta}$
\begin{eqnarray}
\max\{k^{2p\theta},i^{2p\theta}\}
\geq
\frac{1}{2}
\left(
k^{2p\theta}+i^{2p\theta}
\right)
\geq 
2^{-2p\theta}\left(k+i\right)^{2p\theta}.\label{aas}
\end{eqnarray}
On the other hand if $2p\theta<1$ then because $t^{2p\theta}$ is subadditive
\begin{eqnarray}\label{bees}
\max\{k^{2p\theta},i^{2p\theta}\}
\geq
\frac{1}{2}
\left(
k^{2p\theta}+i^{2p\theta}
\right)
\geq 
\frac{1}{2}\left(k+i\right)^{2p\theta}.
\end{eqnarray}
Combining (\ref{aas}) and (\ref{bees}) gives
\begin{eqnarray}\label{TheFinalCase}
e_{k+i}
\leq
4 \max\{C_1,C_4\}(k+i)^{-2p\theta}.
\end{eqnarray}
Finally we consider the case where the first $s$ iterates belong to $I^c$. Therefore, using (\ref{expForm}), for $i=1,2,\ldots, s$
\begin{eqnarray}
e_{i}
&\leq&
e_{1}
\exp
\left\{-\frac{\alpha_1 e_{1}^{\gamma-1} c}{1-p}\left(i^{1-p}-1\right)
\right\}.\label{firstI2}
\end{eqnarray}
Now since for $x\geq 1$, $x-1\geq \frac{x}{2}$, this implies that
\begin{eqnarray*}
e_{i}
&\leq&
e_{1}
\exp
\left\{-\frac{\alpha_1 e_{1}^{\gamma-1} c}{2(1-p)}i^{1-p}
\right\}.
\end{eqnarray*}
Using Lemma \ref{expToPoly} this implies that for any $\delta_3>0$
\begin{eqnarray}
e_{i}\label{cases}
&\leq&
e^{-\delta_3}\delta_3^{\delta_3}
e_{1}
\left(\frac{\alpha_1 e_{1}^{\gamma-1} c}{2(1-p)}i^{1-p}
\right)^{-\delta_3}.
\end{eqnarray}
and we will use $\delta_3=\frac{2p\theta}{1-p}$.

Combining (\ref{firstCase}), (\ref{case3}), (\ref{somecase}), (\ref{TheFinalCase}), and (\ref{cases}) yields the desired result (\ref{BigThetaResult}) and concludes the proof of Theorem \ref{ThmLargeTheta}.
 
\subsection{Proof of Theorem \ref{thmD1}}

The format of the proof is identical to Theorems \ref{thmDimSum} and \ref{ThmLargeTheta}. As before it is based on the set $I$ defined in (\ref{defI}) and we consider three types of iterates. First we bound the convergence rate for iterates in $I$, second for iterates in $I^c$ when the previous iterate is in $I$. And finally for $s$ consecutive iterates in $I^c$ where $s$ may be unbounded. 

If $k\in I$ then repeating (\ref{case3}) yields 
\begin{eqnarray}\label{newcase}
e_k\leq \frac{\alpha_1 G^2}{c} k^{-1}.
\end{eqnarray}
Similarly for $k\in I$ and $k+1\in I^c$, 
\begin{eqnarray}\label{newcase2}
e_{k+1}\leq \frac{2\alpha_1 G^2}{c} (k+1)^{-1}.
\end{eqnarray}
Finally for $k\in I$, $k+1\in I^c$, and $k+i\in I^c$, for $i=2,\ldots, s$, then repeating (\ref{innovate}) but with $\gamma=1$ this time,
\begin{eqnarray*}
e_{k+i}\leq e_{k+i-1}(1-c \alpha_{k+i-1}).
\end{eqnarray*}
Taking logs, using $\log(1-x)\leq -x$ and summing yields
\begin{eqnarray*}
	\log e_{k+i}
	&\leq& 
	\log e_{k+1} - c \alpha_1 \sum_{j=k+1}^{k+i-1} j^{-1}
	\\
	&\leq&
	\log e_{k+1}-c \alpha_1 \left(\log(k+i)-\log(k+1)\right)
\end{eqnarray*}
where we applied Lemma \ref{sumSteps} in the second inequality. This yields for all $k\in I$ and $k+i\in I^c$ for $i=2,3,\ldots, s$ for some $s\in \mathbb{N}$
\begin{eqnarray}
e_{k+i}&\leq& e_{k+1}\left(\frac{k+i}{k+1}\right)^{-c \alpha_1 }.
\label{need}
\end{eqnarray}
Using (\ref{newcase2}) yields 
\begin{eqnarray}
e_{k+i}&\leq&\frac{2 \alpha_1 G^2}{c} (k+1)^{-1}(k+1)^{c \alpha_1 }
(k+i)^{-c \alpha_1}
\nonumber\\
&\leq&
\frac{2 \alpha_1 G^2}{c}
(k+i)^{-c \alpha_1}
\label{thisCase}
\end{eqnarray}

Finally we consider the case where the initial iterates $i=1,2,\ldots,s$ are in $I^c$. Therefore repeating (\ref{need}) with $k=0$ gives
\begin{eqnarray}\label{fin}
e_{i}&\leq& e_{1} i^{-c \alpha_1 }
\end{eqnarray}

Combining (\ref{newcase}), (\ref{newcase2}), (\ref{thisCase}), and (\ref{fin}) yields (\ref{ThmD1Result}) and concludes the proof of Theorem \ref{thmD1}.  

\subsection{Proof of Proposition \ref{PropThta05}}\label{sec:PropProof}
As previously mentioned, this argument is a direct extension of \cite[Thm. 4]{karimi2016linear}. For $\theta=\frac{1}{2}$, (\ref{ABiggy}) reads as 
\begin{eqnarray*}
e_{k+1}\leq (1-2\alpha_k c)e_k+\alpha_k^2 G^2.
\end{eqnarray*}
We consider the choice $\alpha_k =\frac{2k+1}{2c(k+1)^2}$. Then
\begin{eqnarray*}
e_{k+1}
\leq
\left(
1-\frac{2k+1}{(k+1)^2}
\right)e_k+
\frac{G^2(2k+1)^2}{4 c^2(k+1)^4}.
\end{eqnarray*}
Multiplying both sides by $(k+1)^2$ yields
\begin{eqnarray*}
(k+1)^2e_{k+1}
&\leq&
k^2 e_k
+
\frac{G^2(2k+1)^2}{4 c^2(k+1)^2}
\\
&\leq&
k^2 e_k
+
\frac{G^2}{c^2}
\\
&\leq&
e_1+\frac{G^2}{c^2}k.
\end{eqnarray*}
Therefore
\begin{eqnarray*}
e_{k+1}
\leq
\frac{e_1}{(k+1)^2}
+
\frac{G^2}{c^2(k+1)}.
\end{eqnarray*}

{\bf Acknowledgments.} We thank Prof. Niao He for many illuminating and important discussions. 


\bibliographystyle{spmpsci}
\bibliography{refs}

\end{document}